\newtheorem{thm}{Theorem}[section]
\newtheorem{lem}[thm]{Lemma}
\newtheorem{prop}[thm]{Proposition}
\theoremstyle{definition}
\newtheorem{example}[thm]{Example}
\theoremstyle{remark}
\newtheorem{rem}[thm]{Remark}
\newtheorem{question}[thm]{Question}
\numberwithin{equation}{section}
\begin{document}

\title[]
{Extendable periodic automorphisms of closed surfaces over the 3-sphere}


\author{Chao Wang}
\address{School of Mathematical Sciences \& Shanghai Key Laboratory of PMMP, East China Normal University, Shanghai 200241, China}
\email{chao\_wang\_1987@126.com}

\author{Weibiao Wang}
\address{School of Mathematical Sciences, Peking University, Beijing 100871, China}
\email{wwb@pku.edu.cn}

\subjclass[2010]{57M60, 57S17, 57S25.}

\keywords{Extendable map, cyclic group action, periodic surface map, symmetry of 3-sphere.}

\begin{abstract}
  A periodic automorphism of a surface $\Sigma$ is said to be extendable over $S^3$ if it extends to a periodic automorphism of the pair $(S^3,\Sigma)$ for some possible embedding $\Sigma\hookrightarrow S^3$. We classify and construct all extendable automorphisms of closed surfaces, with orientation-reversing cases included. Moreover, they can all be induced by automorphisms of $S^3$ on Heegaard surfaces. As a by-product, the embeddings of surfaces into lens spaces are discussed.
\end{abstract}

\date{}
\maketitle

%
%
%

\section{Introduction}\label{sect:introduction}
  Let $\Sigma_g$ be a connected closed orientable surface of genus $g$. Denote the automorphism group of a manifold by ${\rm Aut}(\cdot)$. A torsion $f\in {\rm Aut}(\Sigma_g)$ is called \emph{extendable} over a 3-manifold $M$, if there exist an embedding $e:\Sigma_g \hookrightarrow M$ and a periodic automorphism $\phi$ of $M$, such that $e \circ f=\phi \circ e$. In other words, $f$ can be induced by a symmetry $\phi$ of $M$ on some embedded surface.
  
  Existing results on this topic are mainly discussed in smooth category and concerned with the case $M=S^3$ or $M=\mathbb{R}^3$. For concrete examples, see \cite{GWWZ}, where the authors determine the extendabilities over $S^3$ for all periodic maps on $\Sigma_2$.
  Equivalent conditions for a periodic orientation-preserving automorphism to be extendable over $\mathbb{R}^3$ were first given in \cite{R}, while the orientation-reversing ones are classified in \cite{C2}. The former work was resently generalized for $S^3$ in orientation-preserving category, see \cite{NWW}, and the authors put forward the following question. 
  
  \begin{question}
  	Take orientation-reversing automorphisms into consideration (i.e., either $f$ or $\phi$ in the definition, or both of them, can be orientation-reversing), then how to classify the periodic maps on $\Sigma_g$ that are extendable over $S^3$? Can the corresponding embedded surfaces always be chosen as Heegaard ones?
  \end{question}
  
  In this paper, we are to solve this problem and construct all periodic extendable maps. As a consequence, we give a positive answer for the latter half of the question. Also, we work in the smooth category, thus by the geometrization of finite group actions on 3-manifolds, each torsion $\phi\in{\rm Aut}(S^3)$ is conjugate to an orthogonal action on the standard 3-sphere $S^3\subset\mathbb{R}^4$. 
  
  Moreover, Funayoshi and Koda \cite{FK} proved that for $g\leq 2$, if a map $f_0\in{\rm Aut}(\Sigma_g)$ extends to an automorphism of $S^3$ with respect to some embedding $\Sigma_g\hookrightarrow S^3$, then $f_0$ can be realized as the restriction of some $\phi_0\in{\rm Aut}(S^3)$ on a Heegaard surface. It remains open whether that still holds for higher genus cases. Here $f_0,\phi_0$ are not necessary to be periodic, but according to a recent result \cite{NWW}, if $f_0$ is periodic, then $\phi_0$ can also be chosen as a torsion. Thus we obtain the partial answer for that problem.
  
  \begin{thm}\label{thm:Heegaard}
  	If an automorphism $\phi_0\in{\rm Aut}(S^3)$ induces a periodic map $f\in{\rm Aut}(\Sigma_g)$ with respect to some embedding $\Sigma_g\hookrightarrow S^3$, then $f$ can also be induced by a periodic automorphism $\phi$ of $S^3$ on a Heegaard surface.
  \end{thm}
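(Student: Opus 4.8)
The plan is to first put $\phi_0$ into a standard form and then build a Heegaard model carrying the required symmetry. Since $f$ is periodic and extends over $S^3$ via $\phi_0$, the result of \cite{NWW} quoted above lets us replace $\phi_0$ by a \emph{torsion} of ${\rm Aut}(S^3)$ inducing the same $f$ (for a possibly new embedding $e\colon\Sigma_g\hookrightarrow S^3$); by the geometrization of finite group actions on $3$-manifolds a further conjugation makes this torsion an orthogonal map of the standard $S^3\subset\mathbb{R}^4$. Write $G=\langle\phi_0\rangle\cong\mathbb{Z}_n$ and $\Sigma=e(\Sigma_g)$. Since $H_2(S^3;\mathbb{Z}_2)=0$, $\Sigma$ separates $S^3$ into compact submanifolds $V,W$ with $V\cap W=\Sigma$; the group $G$ either preserves the unordered pair $\{V,W\}$ or interchanges its members, and in the latter case we let $G_0<G$ be the index-$2$ subgroup preserving each side. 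A periodic surface map is determined up to conjugacy by a finite combinatorial invariant --- the quotient $2$-orbifold $\Sigma/\langle f\rangle$ (its genus, orientability, number of boundary circles) together with the local data at the finitely many exceptional orbits --- so it suffices to produce, for each such invariant compatible with extendability over $S^3$, a genus-$g$ Heegaard surface $\Sigma'\subset S^3$ and a periodic $\phi\in{\rm Aut}(S^3)$ with $\phi(\Sigma')=\Sigma'$ whose restriction has the same invariant as $f$.

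I would then split into cases according to the fixed-point structure of the orthogonal action (and the side behaviour), exhibiting an explicit model in each. In the generic orientation-preserving case $\phi_0$ is a block rotation and some subgroup of $G$ fixes a circle, so the quotient $S^3/G$ (or $S^3/G_0$) is again $S^3$ with branch locus an \emph{unknotted} circle $C$; one takes a Heegaard surface of this quotient $S^3$ meeting $C$ in a trivial tangle and pulls it back. Because a branched cover of a handlebody over a trivial tangle is again a handlebody, the preimage is a genuine Heegaard surface of the upstairs $S^3$, and a Riemann--Hurwitz count lets one tune its genus and branch data to any value compatible with extendability. When the action is free, $S^3/G$ is a lens space $L$ and one needs a surface in $L$ whose preimage in $S^3$ is Heegaard --- this is exactly what the by-product on embeddings of surfaces into lens spaces provides. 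The intermediate orientation-preserving case (a lens-space orbifold with a singular circle) and the orientation-reversing cases (where $\det\phi_0=-1$ forces $n$ even, $\phi_0^{n/2}$ an involution, and the quotient orbifold has underlying space $B^3$, $S^3$, $\mathbb{RP}^3$, \dots\ with mirror or rotational singular locus) are handled by the obvious analogues of these two constructions, always arranging the fixed/branch set to be unknotted so that the covering surface comes out Heegaard.

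Finally one matches the model with the original $f$: the hypothesis that $(\Sigma_g,f)$ is extendable over $S^3$ via the orthogonal $\phi_0$ constrains the combinatorial invariant of $f$ to lie in the list realized by the models --- this is the classification of \cite{NWW} in the orientation-preserving/orientation-preserving case and the classification established in the body of this paper in the remaining cases --- so by construction $f$ is conjugate to the restriction of a periodic automorphism of $S^3$ to the corresponding Heegaard surface. The hardest part should be the bookkeeping in the ``swap'' and orientation-reversing cases: there the model is a genus-$g$ handlebody $V'$ equipped with a $\mathbb{Z}_n$- or $\mathbb{Z}_{n/2}$-action together with an explicit (orientation-reversing) identification of $V'$ with a second copy of itself, and one must verify that, as the data varies, the induced boundary maps account for \emph{every} extendable $f$ of that type --- in particular the sub-case in which $\phi$ acts freely on the Heegaard surface, which again is settled via the lens-space embedding results. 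Lining up the local models and their Riemann--Hurwitz constraints with the data of an arbitrary orthogonal action is where the real effort goes; the reductions to orthogonal actions and to orbifold data are comparatively routine.
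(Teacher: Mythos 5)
Your overall architecture is the same as the paper's: reduce to a periodic orthogonal $\phi_0$ via \cite{NWW} and geometrization, classify the extendable $f$ by the invariants of its quotient orbifold, realize every admissible set of invariants on a Heegaard surface by pulling surfaces back through branched coverings of $S^3$ over unknotted circles (this is what the paper's genus/singular/branch/boundary surgeries applied to the basic examples of Section \ref{sect:examples} accomplish), and finally match $f$ with a model by uniqueness of the conjugacy class.

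There is, however, a genuine gap in the matching step. You assert that a periodic surface map is determined up to conjugacy by the topology of the quotient $2$-orbifold together with the local data at the exceptional orbits. That is true only when $|\Sigma_g/f|$ is orientable (Theorem \ref{thm:classification}(1)). When the quotient is non-orientable --- which is exactly what happens in the orientation-reversing cases that are the new content of this paper --- two further invariants $h_1(f)$ and $h_2(f)$ are required (Theorem \ref{thm:classification}(2)); maps sharing the same orbifold and isotropy data but differing in $h_1$ or $h_2$ need not be conjugate, and not all of them are extendable. So exhibiting a Heegaard model with the same orbifold and local data as $f$ does not yet show that $f$ is conjugate to the model's restriction. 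Closing this gap requires two things that your sketch omits: computing $h_1,h_2$ for the Heegaard models (Example \ref{ex:-+Klein} does this for the Klein-bottle-quotient case by an auxiliary double branched cover and the congruences defining $k$ and $m_0$), and proving that extendability forces $h_1,h_2$ to take exactly those values --- which rests on the lens-space results, in particular $i_*(\delta)=l/2$ from Proposition \ref{prop:torsion} giving $h_1(f)\equiv l/2\,({\rm mod}\,l)$, and the uniqueness of the Klein bottle embedding in $L(4r,2r-1)$ (Lemma \ref{lem:unique-embedding}). These computations occupy much of Sections \ref{sect:realizations} and \ref{sect:necessity} and are not routine bookkeeping; without them the final conjugacy claim does not follow. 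A smaller point worth noting: the paper only pins down the conjugacy class of the cyclic group $\langle f\rangle$ rather than of $f$ itself, which suffices because one may replace $\phi$ by a suitable power, and this normalization by powers is what makes the invariants manageable.
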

  
  To classify periodic extendable maps, we first introduce some necessary invariants in Section \ref{sect:theorems}. With them we state our main results, Theorems \ref{thm:--}, \ref{thm:+-} and \ref{thm:-+}, which provide equivalent conditions for a periodic map on $\Sigma_g$ to be extendable over $S^3$. 
  Basic examples will be presented in Section \ref{sect:examples}, and all periodic extendable automorphisms can be constructed from them. In fact, in Section \ref{sect:realizations} we conduct modifications on the basic examples to get more extendable maps. Meanwhile, we prove that for a surface automorphism $f$ satisfying the extension conditions listed in the main theorems, $f$ must be conjugate to one of them. 
  In Section \ref{sect:necessity} we finally prove the necessity of the extension conditions case by case.
  Note that when a periodic automorphism $\phi\in{\rm Aut}(S^3,\Sigma_g)$ preserves the orientation of $S^3$, the quotient orbifold pair $(S^3/\phi,\Sigma_g/\phi)$ is related to an embedded surface in a lens space, so we deal with the topic in Section \ref{sect:surfaces-in-lens-space}. 
  
  Extendability characterizes whether a symmetry of a surface can be induced by those of 3-manifolds. The notion can be generalized in different ways and some efforts have been made to understand it. For instance, a finite subgroup $G$ of ${\rm Aut}(\Sigma_g)$ (or a $G$-action) is called \emph{extendable} over $S^3$ with respect to an embedding $e:\Sigma_g \hookrightarrow S^3$, if there exists a group monomorphism $\varphi$ from $G$ to ${\rm Aut}(S^3)$, such that $\varphi(h) \circ e=e \circ h$ holds for each $h \in G$. The maximum order of extendable groups for fixed genus $g$ is discussed in \cite{WWZZ1,WWZZ2,WZ}. And as a contrast to Theorem \ref{thm:Heegaard}, there are extendable finite group actions on $\Sigma_{21}$ and $\Sigma_{481}$ such that their extensions can not be realized on Heegaard surfaces \cite{WWZZ2}.
  
  \vspace{3mm}
  
  Acknowledgements: We thank Professor Shicheng Wang of Peking University for his suggestions and concerns on our work. The first author is supported by Science and Technology Commission of Shanghai Municipality (STCSM), grant No. 18dz2271000. The second author is supported by National Natural Science Foundation of China (NSFC) under grant No. 11871078. 

\section{Classification of periodic maps and main theorems}\label{sect:theorems}
  The classification of periodic maps on closed orientable surfaces was finished by Yokoyama \cite{Y1,Y2,Y} and Costa \cite{C1,C3} independently. Indeed, Yokoyama completed that for all compact surfaces. We introduce the involved invariants and present the result here in a convenient way for our task. And the notations will be used throughout the paper.
  
  Given a periodic map $f\in {\rm Aut}(\Sigma_g)$ of order $n$, we obtain an orbifold $\Sigma_g/f$. Suppose $\Sigma_g/f$ has $s$ isolated singular points of indices $n_1,n_2,\cdots,n_s$ respectively, and the underlying space $|\Sigma_g/f|$ is a compact surface of genus $h$, with $b$ boundary components. Then there is a canonical presentation of the orbifold fundamental group $\pi_1(\Sigma_g/f)$:
  \begin{displaymath}
  \begin{split}
  \pi_1(\Sigma_g/f)=&
  \langle \alpha_1,\beta_1,\cdots,\alpha_h,\beta_h, \eta_1,\epsilon_1,\cdots,\eta_b,\epsilon_b,\xi_1,\cdots,\xi_s |\\
  &\prod_{i=1}^{h}[\alpha_i,\beta_i]\prod_{j=1}^{b}\epsilon_j\prod_{k=1}^{s}\xi_k=1,\,
  \xi_k^{n_k}=1(1\leq k\leq s),\\ &\eta_j=\epsilon_j^{-1}\eta_j\epsilon_j,\, \eta_j^2=1(1\leq j\leq b) \rangle,
  \end{split}
  \end{displaymath} 
  if $|\Sigma_g/f|$ is orientable; or 
  \begin{displaymath}
  \begin{split}
  \pi_1(\Sigma_g/f)=&
  \langle \delta_1,\cdots,\delta_h, \eta_1,\epsilon_1,\cdots,\eta_b,\epsilon_b,\xi_1,\cdots,\xi_s |\\
  &\prod_{i=1}^{h}\delta_i^2\prod_{j=1}^{b}\epsilon_j\prod_{k=1}^{s}\xi_k=1,\,
  \xi_k^{n_k}=1(1\leq k\leq s),\\ &\eta_j=\epsilon_j^{-1}\eta_j\epsilon_j,\, \eta_j^2=1(1\leq j\leq b) \rangle,
  \end{split}
  \end{displaymath}
  if $|\Sigma_g/f|$ is non-orientable. Here $\epsilon_1,\cdots,\epsilon_b$ correspond to the boundary components, and $\xi_1,\cdots,\xi_s$ can be represented by loops surrounding the corresponding singular points. The collection of such generators,
  \begin{displaymath}
    \mathcal{G}=\{\alpha_1,\beta_1,\cdots,\alpha_h,\beta_h,  \eta_1,\epsilon_1,\cdots,\eta_b,\epsilon_b,\xi_1,\cdots,\xi_s\},
  \end{displaymath}
  or 
  \begin{displaymath}
    \mathcal{G}=\{\delta_1,\cdots,\delta_h, \eta_1,\epsilon_1,\cdots,\eta_b,\epsilon_b,\xi_1,\cdots,\xi_s\},
  \end{displaymath}
  is called a \emph{canonical generator system} of $\pi_1(\Sigma_g/f)$. 
  
  The action of the finite cyclic group $\langle f \rangle$ on $\Sigma_g$ induces a short exact sequence
  \begin{displaymath}
    1\to\pi_1(\Sigma_g)\xrightarrow{\rho_*}\pi_1(\Sigma_g/f)\xrightarrow{\psi}\langle f\rangle\to 1,
  \end{displaymath}
  where $\rho:\Sigma_g\to\Sigma_g/f$ is the quotient map.
  The following proposition shows that $\psi$ plays a key role in the classification of periodic maps up to conjugacy.
  
  \begin{prop}[Theorem 2.2 in \cite{GWWZ} and Lemma 4.1 in \cite{C1}]\label{prop:conjugacy}
  	Suppose $f,f'\in {\rm Aut}(\Sigma_g)$ are periodic maps of the same order and with epimorphisms $\psi,\psi'$ respectively. $f,f'$ are conjugate if and only if there exists an orbifold homeomorphism $H:\Sigma_g/f\to\Sigma_g/f'$ such that $\iota\circ\psi=\psi'\circ H_*$, where $\iota:\langle f\rangle\to \langle f'\rangle$ is defined by $\iota(f)=f'$.
  \end{prop}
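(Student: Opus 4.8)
The plan is to reinterpret the epimorphism $\psi$ geometrically and then run the classical lifting theory of regular (orbifold) coverings in both directions. Recall that $\rho:\Sigma_g\to\Sigma_g/f$ is a regular orbifold covering whose group of deck transformations is precisely $\langle f\rangle$, and that $\psi$ is nothing but the associated monodromy homomorphism: for a loop $\gamma$ based at a regular point $p_0$ of $\Sigma_g/f$, one lifts $\gamma$ to a path $\tilde\gamma$ starting at a chosen point $\tilde p_0$ over $p_0$, and $\psi([\gamma])$ is the unique power of $f$ carrying $\tilde p_0$ to the endpoint of $\tilde\gamma$. Since the target $\langle f\rangle$ is abelian, a change of basepoint or of the connecting path alters $\psi$ only by an inner automorphism of the source, which disappears after composing into an abelian group; hence $\psi$ is canonical and, likewise, the identity $\iota\circ\psi=\psi'\circ H_*$ is independent of the path used to define $H_*$, so the statement is well-posed. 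Throughout I will use that $\ker\psi=\rho_*\pi_1(\Sigma_g)$ and $\ker\psi'=\rho'_*\pi_1(\Sigma_g)$, as read off from the defining short exact sequences.

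For necessity, suppose $f'=h f h^{-1}$ with $h\in{\rm Aut}(\Sigma_g)$. First I would check that $h$ carries $\langle f\rangle$-orbits to $\langle f'\rangle$-orbits, since $h f^k=(f')^k h$; therefore $h$ descends to a well-defined orbifold homeomorphism $H:\Sigma_g/f\to\Sigma_g/f'$ with $\rho'\circ h=H\circ\rho$, and $H$ respects the singular strata because $h$ conjugates point stabilizers accordingly. To verify $\iota\circ\psi=\psi'\circ H_*$ I would run the monodromy computation: a loop $\gamma$ whose lift runs from $\tilde p_0$ to $f^k\tilde p_0$ has image $H\circ\gamma$ whose lift, namely $h\circ\tilde\gamma$, runs from $h\tilde p_0$ to $h f^k\tilde p_0=(f')^k h\tilde p_0$, so the monodromy of $H\circ\gamma$ is $(f')^k=\iota(\psi([\gamma]))$. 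This is exactly $\psi'(H_*[\gamma])=\iota(\psi[\gamma])$.

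For sufficiency, I would reverse this. Given an orbifold homeomorphism $H$ with $\iota\circ\psi=\psi'\circ H_*$, the isomorphism $H_*$ satisfies $H_*(\ker\psi)=\ker\psi'$ because $\iota$ is an isomorphism; equivalently $H_*$ sends the covering subgroup $\rho_*\pi_1(\Sigma_g)$ onto $\rho'_*\pi_1(\Sigma_g)$. By the lifting criterion for regular orbifold coverings, $H$ then lifts to a map $h:\Sigma_g\to\Sigma_g$ with $\rho'\circ h=H\circ\rho$; applying the same criterion to $H^{-1}$ produces an inverse, so $h\in{\rm Aut}(\Sigma_g)$. Finally I would invoke the equivariance of lifts of regular coverings: for each deck transformation $\tau$ of $\rho$, both $h\circ\tau$ and $h$ lift $H\circ\rho$, so they differ by a deck transformation of $\rho'$, and the resulting assignment $\tau\mapsto h\tau h^{-1}$ is precisely the isomorphism $\overline{H}_*:\langle f\rangle\to\langle f'\rangle$ induced by $H_*$ on deck groups. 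The compatibility $\psi'\circ H_*=\iota\circ\psi$ identifies $\overline{H}_*$ with $\iota$, whence $h f h^{-1}=\iota(f)=f'$, as desired.

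The main obstacle I anticipate is purely bookkeeping about orbifolds: one must be sure the lifting criterion and the equivariance of lifts are valid in the orbifold category and not merely for manifold covers. This is safe because the $2$-orbifolds $\Sigma_g/f$ and $\Sigma_g/f'$ are good (quotients of a surface by a finite cyclic action), so the standard orbifold covering theory applies verbatim, and the regularity of $\rho,\rho'$ (their deck groups are the full covering groups $\langle f\rangle$, respectively $\langle f'\rangle$) is exactly what makes the subgroup condition $H_*(\ker\psi)=\ker\psi'$ both necessary and sufficient for the equivariant lift to exist. The only other care needed is the basepoint/inner-automorphism indeterminacy flagged in the first paragraph, which the abelian-ness of $\langle f\rangle$ renders harmless.
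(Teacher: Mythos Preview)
Your argument is correct and is the standard covering-space (orbifold) lifting proof of this classical fact. Note, however, that the paper does not actually prove Proposition~\ref{prop:conjugacy}: it is quoted with attribution to \cite{GWWZ} (Theorem~2.2) and \cite{C1} (Lemma~4.1) and used as a black box, so there is no ``paper's own proof'' to compare against. What you have written is essentially the proof one would find in those references: interpret $\psi$ as the monodromy of the regular orbifold covering $\rho$, and then run the lifting criterion in both directions, using that $\ker\psi=\rho_*\pi_1(\Sigma_g)$ and that the deck group is abelian so basepoint ambiguities vanish. Your handling of the sufficiency direction---in particular the verification that the induced isomorphism $\tau\mapsto h\tau h^{-1}$ on deck groups coincides with $\iota$ precisely because of the hypothesis $\psi'\circ H_*=\iota\circ\psi$---is the key point and you carry it out correctly. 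The caveat you flag about orbifold covering theory is appropriate and your resolution (these orbifolds are good, being global quotients) is the right one.
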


  Given a canonical generator system $\mathcal{G}$ of $\pi_1(\Sigma_g/f)$, we obtain a collection of elements $\psi(\gamma)(\gamma\in\mathcal{G})$ in $\langle f\rangle$. 
  Mapping $f$ to the generator $1$ of the addative group $\mathbb{Z}_n$, we identify $\langle f \rangle$ with $\mathbb{Z}_n$. Without ambiguity, we use integers to represent their images under the modulo $n$ homomorphism $\mathbb{Z}\to \mathbb{Z}_n$. 
  Then the \emph{isotropy invariant} of $f$ is defined to be
  \begin{displaymath}
	  \pm(\psi(\epsilon_1),\psi(\epsilon_2),\cdots,\psi(\epsilon_b);\,\psi(\xi_1),\psi(\xi_2),\cdots,\psi(\xi_s))
  \end{displaymath}
  if $|\Sigma_g/f|$ is orientable, or
  \begin{displaymath}
	  (\pm\psi(\epsilon_1),\pm\psi(\epsilon_2),\cdots,\pm\psi(\epsilon_b);\,\pm\psi(\xi_1),\pm\psi(\xi_2),\cdots,\pm\psi(\xi_s))
  \end{displaymath}
  if $|\Sigma_g/f|$ is non-orientable.
  
  To classify periodic maps on closed surfaces, we still need two more invariants for two special cases. 
  \begin{description}
  	\item[Case 1] $n/2$ is even, $|\Sigma_g/f|$ is non-orientable (thus closed), and there is no $\pm n/2$ among $\psi(\xi_1),\cdots,\psi(\xi_s)$. Then we define $h_1(f)$ as 
  	\begin{displaymath}
  	h_1(f)=\sum_{i=1}^{h}\psi(\delta_i)+\sum_{k=1}^{s}\chi_k\psi(\xi_k),
  	\end{displaymath}
  	where 
  	\begin{displaymath}
  	\chi_k=
  	\begin{cases}
  	0, & \textrm{if }\psi(\xi_k)\in\{2,4,\cdots,\frac{n}{2}-2\}\subset\mathbb{Z}_n;\\
  	1, & \textrm{if }\psi(\xi_k)\in\{\frac{n}{2}+2,\frac{n}{2}+4,\cdots,n-2\}\subset\mathbb{Z}_n.
  	\end{cases}
  	\end{displaymath}
    Note that each $\psi(\delta_i)$ is odd and each $\psi(\xi_k)$ is even, so $h_1(f)$ has the same parity with $h$. 
    
    \item[Case 2] $|\Sigma_g/f|$ is non-orientable with genus $h=2$. We choose
    \begin{displaymath}
    m={\rm gcd}(\psi(\delta_1)+\psi(\delta_2),\psi(\epsilon_1),\cdots,\psi(\epsilon_b),\psi(\xi_1),\cdots,\psi(\xi_s),n),
    \end{displaymath}
    where ${\rm gcd}(\cdots)$ means the greatest common divisor.
    Let $f_m$ be the order $m$ map induced by $f$ on $|\Sigma_g/f^m|$, and 
    \begin{displaymath}
    \begin{split}
    \psi_m:\pi_1(|\Sigma_g/f^m|/f_m)\to\langle f_m\rangle &=\mathbb{Z}_m\\
    f_m &\leftrightarrow 1
    \end{split} 
    \end{displaymath}
    be the corresponding epimorphism.
    Then we define $h_2(f)$ as a multiset
    \begin{displaymath}
    h_2(f)=\{\pm\psi_m(\delta_1),\pm\psi_m(\delta_2)\}\subset\mathbb{Z}_m.
    \end{displaymath}
  \end{description}
  
  The following theorem tells that $f\in{\rm Aut}(\Sigma_g)$ is determined up to conjugacy by the topology of $|\Sigma_g/f|$, the isotropy invariant, and $h_1(f),h_2(f)$ if defined. Besides, their values are independent of the choise of the canonical generator system $\mathcal{G}$ for $\pi_1(\Sigma_g/f)$.
  
  \begin{thm}[\textbf{Classification theorem}]\label{thm:classification}
  	Let $f,f'\in{\rm Aut}(\Sigma_g)$ be two periodic maps of the same order $n$, and $|\Sigma_g/f|\cong|\Sigma_g/f'|$. 
  	
  	(1) Suppose $|\Sigma_g/f|,|\Sigma_g/f'|$ are orientable. $f,f'$ are conjugate if and only if they have the same isotropy invariant, up to a reordering of the singular points and boundary components of the corresponding orbifolds.
  	
  	(2) Suppose $|\Sigma_g/f|,|\Sigma_g/f'|$ are non-orientable.  $f,f'$ are conjugate if and only if they satisfy the following conditions:
  	\begin{enumerate}[\hspace{1.5em}(i)]
  		\item $f,f'$ have the same isotropy invariant, up to a reordering of the singular points and boundary components of the corresponding orbifolds;
  		\item if $n/2$ is even and there is no $\pm n/2$ in the isotropy invariant, then $h_1(f)=h_1(f')$;
  		\item if $|\Sigma_g/f|,|\Sigma_g/f'|$ have genus 2, then $h_2(f)=h_2(f')$. 
  	\end{enumerate}
  \end{thm}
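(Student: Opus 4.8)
The plan is to reduce everything to Proposition \ref{prop:conjugacy}, which says that $f$ and $f'$ are conjugate if and only if there is an orbifold homeomorphism $H\colon \Sigma_g/f\to\Sigma_g/f'$ compatible with the epimorphisms $\psi,\psi'$ under the identification $\iota$. So in every case the task is the same: translate the numerical data (isotropy invariant, and $h_1$ or $h_2$ when defined) into the existence or non-existence of such an $H$. For the ``only if'' directions one must check that the listed quantities are genuine invariants, i.e. unchanged when we replace $\mathcal{G}$ by another canonical generator system or post-compose with an orbifold homeomorphism; this is the content of the parenthetical remark before the theorem and is where the bookkeeping lives. For the ``if'' directions one must build $H$ from the combinatorial agreement of the invariants.

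First I would dispose of the orientable case (1). The mapping class group of an orientable orbifold acts on the set of canonical generator systems, and the classical fact is that two such systems are related by a sequence of moves that (a) permute the $\epsilon_j$ among themselves and the $\xi_k$ among themselves, (b) simultaneously conjugate all generators, and (c) perform ``braid-type'' slides that replace a pair $(\xi_k,\xi_{k+1})$ by $(\xi_k\xi_{k+1}\xi_k^{-1},\xi_k)$ and similarly for the $\epsilon$'s. Under $\psi$, conjugation acts trivially on the abelian group $\mathbb{Z}_n$, and the slide moves preserve the multiset of values $\psi(\xi_k)$ and $\psi(\epsilon_j)$; the global sign $\pm$ records the two choices of identifying $\langle f\rangle$ with $\mathbb{Z}_n$ compatibly with an orientation-reversing self-homeomorphism of the base (when one exists, which for closed orientable base it always does after allowing relabelling). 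Conversely, given equality of isotropy invariants one constructs $H$ by first realizing the permutation of singular points and boundary components by a homeomorphism of $|\Sigma_g/f|$, then adjusting by a self-homeomorphism inducing the sign, and checking the lifting criterion $\iota\circ\psi=\psi'\circ H_*$ holds on generators, hence on all of $\pi_1$.

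The non-orientable case (2) is the main obstacle, and it is exactly why the extra invariants $h_1,h_2$ appear. Here the mapping class group of the base surface is larger — it contains crosscap slides (the ``$Y$-homeomorphism'') and Dehn twists about one-sided curves — and these moves can change $\sum\psi(\delta_i)$ modulo the subgroup generated by $2$ and the $\psi(\xi_k)$, so the naive analogue of the isotropy invariant is no longer a complete invariant. My approach would be: (i) work out precisely how each generating self-homeomorphism of a non-orientable orbifold acts on $\psi$ evaluated on a canonical system — the $\delta_i$ generators transform by $\delta_i\mapsto \delta_i^{-1}$, by swaps $\delta_i\leftrightarrow\delta_j$, and by a ``handle creation'' relation $\delta_i^2\delta_{i+1}^2=(\delta_i\delta_{i+1})^2\cdot(\text{commutator})$-type identities that let one trade two crosscaps for a handle when $h\ge 3$; (ii) observe that when $h\ge 3$ these relations make $\sum\psi(\delta_i)$ adjustable freely enough that the isotropy invariant alone suffices, except that the reduction of $\sum\psi(\delta_i)$ modulo $2$ and modulo the $\psi(\xi_k)$ is rigid precisely in the ``Case 1'' situation ($n/2$ even, no $\pm n/2$ among the $\psi(\xi_k)$), which is what $h_1(f)$ captures — one checks directly that the $\chi_k$-weighted sum is invariant under all the moves and that any two systems with the same isotropy invariant and the same $h_1$ are connected; (iii) when $h=2$ there is no room to create a handle, the available moves on $\{\delta_1,\delta_2\}$ are just inversions, the swap, and multiplication of $\psi(\delta_i)$ by units coming from the $\gcd$ computation defining $m$ — this is why one passes to the quotient map $f_m$ on $|\Sigma_g/f^m|$ and records the multiset $h_2(f)=\{\pm\psi_m(\delta_1),\pm\psi_m(\delta_2)\}$, whose invariance follows from that of the isotropy invariant one level down together with the definition of $m$. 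In all three sub-cases, once the invariance is established the ``if'' direction is again a matter of assembling $H$ from a base homeomorphism realizing the matched combinatorial data and verifying the criterion of Proposition \ref{prop:conjugacy} on generators. I expect the genuinely delicate point to be step (i)–(ii) of case (2): getting the exact subgroup of $\mathbb{Z}_n$ by which $\sum\psi(\delta_i)$ can be moved, and checking that $h_1$ is the unique remaining obstruction — this requires a careful case analysis on the parity of $n$ and $n/2$ and on the values $\psi(\xi_k)$, and is where I would expect to consult or reprove the arguments of Yokoyama and Costa.
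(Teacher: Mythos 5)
The first thing to say is that the paper does not prove this theorem at all: it is explicitly imported from the classification results of Yokoyama \cite{Y1,Y2,Y} and Costa \cite{C1,C3} (``we introduce the involved invariants and present the result here in a convenient way for our task''). So there is no in-paper argument to compare yours against; the honest benchmark is the cited literature.

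Measured against that, your proposal is a correct identification of the standard strategy --- reduce to Proposition~\ref{prop:conjugacy} and then analyze how changes of canonical generator system (equivalently, the orbifold mapping class group of the base) act on the values of $\psi$ --- and you correctly locate where the difficulty sits: the non-orientable base, where crosscap slides and the trading of crosscaps for handles move $\sum_i\psi(\delta_i)$ inside a subgroup of $\mathbb{Z}_n$ that must be pinned down exactly, with $h_1$ and $h_2$ as the residual obstructions. But as written this is a plan, not a proof. Every decisive claim is asserted rather than established: that the moves you list generate all changes of canonical generator system (this is the hard input, essentially Yokoyama's work); that the $\chi_k$-weighted sum defining $h_1(f)$ is invariant under every such move (the case analysis on the parity of $n/2$ and on which $\psi(\xi_k)$ lie in which half of $\mathbb{Z}_n$ is exactly where the definition of $\chi_k$ earns its keep, and you have not done it); that $h_2$ is well defined independently of $\mathcal{G}$ and complete for $h=2$; and that in the sufficiency direction the homeomorphism $H$ can always be assembled so that $\iota\circ\psi=\psi'\circ H_*$ holds globally and not just on a generating set modulo the relations. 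You also slightly misframe the logic by tying $h_1$ to $h\ge 3$ and $h_2$ to $h=2$ as alternatives: conditions (ii) and (iii) of the theorem are independent and can both be in force simultaneously (e.g.\ $h=2$ with $n/2$ even and no $\pm n/2$ among the $\psi(\xi_k)$), and $h_1$ is defined for all genera of a closed non-orientable base, including $h=1,2$. Finally, the bordered cases (the generators $\eta_j,\epsilon_j$ and the reflection-type behaviour they encode) are not addressed at all. None of this makes the approach wrong --- it is the approach of the references --- but to turn the proposal into a proof you would have to carry out precisely the computations you defer to ``consulting or reproving Yokoyama and Costa,'' which is the entire content of the theorem.
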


  If $f\in {\rm Aut}(\Sigma_g)$ extends to $\phi\in{\rm Aut}(S^3)$ with some embedding $\Sigma_g\hookrightarrow S^3$, then there are four types of $(f,\phi)$:
  
  \begin{itemize}
  	\item type $(+,+)$: $f,\phi$ are both orientation-preserving;
  	\item type $(-,-)$: $f,\phi$ are both orientation-reversing;
  	\item type $(+,-)$: $f$ preserves the orientation of $\Sigma_g$ while $\phi$ reverses that of $S^3$; 
  	\item type $(-,+)$: $f$ reverses the orientation of $\Sigma_g$ while $\phi$ preserves that of $S^3$.
  \end{itemize}
  
  Now we can state our main theorems with the notations listed above. 
  The first one is a recent result from \cite{NWW}. We just put it here for the sake of completeness.
  
  \begin{thm}\label{thm:++}
  	A periodic orientation-preserving map $f\in {\rm Aut}(\Sigma_g)$ is extendable over $S^3$ in type $(+,+)$ if and only if the isotropy invariant $\pm(\psi(\xi_1),\psi(\xi_2),\cdots,\psi(\xi_s))$ is 
  	\begin{displaymath}
	  	\pm(\underbrace{\alpha,\alpha,\cdots,\alpha}_{t},\underbrace{-\alpha,-\alpha,\cdots,-\alpha}_{t},\underbrace{\beta,\beta,\cdots,\beta}_{s/2-t},\underbrace{-\beta,-\beta,\cdots,-\beta}_{s/2-t}) 
  	\end{displaymath}
  	up to rearrangement, where $\alpha,\beta$ are elements of coprime orders $p,q$ in $\mathbb{Z}_n$ respectively, and $0\leq 2t\leq s$.
  	
  	Moreover, in that case the conjugacy class of $\langle f\rangle$ is uniquely determined by its period $n$, the genus $h$ of the quotient orbifold $\Sigma_g/f$, and the parameters $s,t,p,q$.
  \end{thm}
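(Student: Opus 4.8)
The plan is to prove this (following \cite{NWW}) by analyzing orthogonal actions on $S^3$. First I would normalize: since $\phi$ is orientation-preserving of finite order, up to conjugacy it is the linear map $(z_1,z_2)\mapsto(\zeta^{a}z_1,\zeta^{b}z_2)$ on $S^3\subset\mathbb{C}^2$ with $\zeta=e^{2\pi i/n}$ and $\gcd(a,b,n)=1$. I would record that the singular set of the $\langle\phi\rangle$-action is the Hopf link $C_1\sqcup C_2$, $C_1=\{z_2=0\}$ and $C_2=\{z_1=0\}$, with every point of $C_1$ (resp.\ $C_2$) having stabilizer of order $p:=\gcd(a,n)$ (resp.\ $q:=\gcd(b,n)$); that effectiveness forces $\gcd(p,q)=1$; that $\langle\phi\rangle$ acts on $C_1$ through a free rotation action of $\mathbb{Z}_{n/p}$ and on $C_2$ through $\mathbb{Z}_{n/q}$; and that the genus-$1$ Heegaard surface $\Sigma^{*}=\partial N(C_1)=\partial N(C_2)$ is $\phi$-invariant.

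For necessity, given a $\phi$-invariant embedding $\Sigma_g\hookrightarrow S^3$ with $\phi|_{\Sigma_g}=f$, I would first note that $|\Sigma_g/f|$ is orientable (as $f$ is orientation-preserving), so the $h_1,h_2$ invariants do not enter and Theorem \ref{thm:classification}(1) applies. Since $\Sigma_g$ is separating and, when $p,q>1$, cannot contain $C_1$ or $C_2$ (an orientation-preserving finite-order surface map fixing a circle is the identity), I may take $\Sigma_g$ transverse to $C_1\sqcup C_2$. Every point of $\Sigma_g$ with nontrivial stabilizer then lies on $C_1\sqcup C_2$, so the singular points of $\Sigma_g/f$ are exactly the $\phi$-orbits on $\Sigma_g\cap C_1$ (index $p$) and $\Sigma_g\cap C_2$ (index $q$); hence all the $\psi(\xi_k)$ have order $p$ or $q$ in $\mathbb{Z}_n$, and since all points of $C_1$ share one local $\phi$-model, each orbit on $C_1$ acquires rotation number $\pm\alpha$ for a single $\alpha$ of order $p$. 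The decisive point is the sign pattern: $\phi$ preserves the co-orientation of $\Sigma_g$ within each orbit, whereas consecutive intersection points of $\Sigma_g$ with $C_1$ have opposite co-orientations because $\Sigma_g$ separates $S^3$; so the orbits on $C_1$ alternate in sign around $C_1$, giving equally many $\alpha$'s and $-\alpha$'s, say $t$ of each, and likewise $s/2-t$ copies each of $\beta,-\beta$ of order $q$ on $C_2$. That is the claimed isotropy invariant.

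For sufficiency I would go the other way: given $\alpha,\beta$ of coprime orders $p,q$ and $0\le 2t\le s$, pick $a,b$ with $\gcd(a,n)=p$, $\gcd(b,n)=q$ producing the prescribed local rotations, form $\phi$, and build the invariant surface starting from $\Sigma^{*}$. Choosing $t$ orbits on $C_1$ and $s/2-t$ on $C_2$, I would push $\Sigma^{*}$ across a $\phi$-equivariant system of arcs joining the chosen points; each equivariant tubing makes the surface pass through one orbit and, by the sign alternation above, contributes exactly one pair $(\alpha,-\alpha)$ (resp.\ $(\beta,-\beta)$) to the isotropy invariant. Equivariant stabilizations then adjust the base genus to $h$ and, via Riemann--Hurwitz, the total genus to $g$, after which the quotient data is exactly the desired invariant, so $\phi|_{\Sigma_g}=f$ is $(+,+)$-extendable. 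For the ``moreover'' clause I would observe that $h$ and $s,t,p,q$ are read off from $\langle f\rangle$ and that Riemann--Hurwitz recovers $g$; conversely, if $f,f'$ are $(+,+)$-extendable with the same $n,h,s,t,p,q$, then replacing $f$ by a suitable power $f^{k}$ ($\gcd(k,n)=1$) scales its isotropy invariant by $k$, and since $\gcd(p,q)=1$ the Chinese Remainder Theorem supplies one unit $k$ matching the $(\alpha,-\alpha;\beta,-\beta)$-pattern of $f$ to that of $f'$; Theorem \ref{thm:classification}(1) then gives $f^{k}$ conjugate to $f'$, i.e.\ $\langle f\rangle$ conjugate to $\langle f'\rangle$.

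I expect the main obstacle to be the equivariant realization in the sufficiency step — constructing a $\phi$-invariant surface with \emph{precisely} the prescribed orbit data (neither more nor fewer singular orbits) while simultaneously hitting the target genus — together with the careful bookkeeping of normal co-orientations that forces the $\alpha/{-}\alpha$ pairing along each axis; the normalization and counting steps are comparatively routine.
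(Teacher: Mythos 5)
Your proposal is correct and takes essentially the same route as the paper: the necessity direction is exactly the orthogonal-normal-form / Hopf-link / signed-intersection argument that the paper defers to \cite{NWW}, the sufficiency matches the paper's equivariant surgeries on the invariant Heegaard torus in Section \ref{subsect:realization++}, and your ``moreover'' argument (normalizing the isotropy invariant by passing to a power $f^k$ chosen via the Chinese Remainder Theorem, then invoking Theorem \ref{thm:classification}(1)) is precisely Proposition \ref{prop:conjugacy++}. The only step you gloss over is lifting the unit $k$ from $\mathbb{Z}_{pq}$ to a unit of $\mathbb{Z}_n$, which the paper handles with Lemma \ref{lem:number-theory}(1).
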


  \begin{thm}\label{thm:--}
  	A periodic orientation-reversing map $f\in {\rm Aut}(\Sigma_g)$ is extendable over $S^3$ in type $(-,-)$ if and only if there is a generator $\alpha$ of $\mathbb{Z}_n$, such that there are only $\pm 2\alpha$ and $0$ in the isotropy invariant and one of the followings happens:
  	\begin{enumerate}
  		\item $n/2$ is odd and $|\Sigma_g/f|$ is orientable.
  		If $n>2$, then up to rearrangement the isotropy invariant $\pm(\psi(\epsilon_1),\psi(\epsilon_2),\cdots,\psi(\epsilon_b);\,\psi(\xi_1),\psi(\xi_2),\cdots,\psi(\xi_s))$ is 
  		\begin{displaymath}
  		\pm(\underbrace{2\alpha,\cdots,2\alpha}_{t-\lceil\frac{s}{2}\rceil},\underbrace{-2\alpha,\cdots,-2\alpha}_{t-\lfloor\frac{s}{2}\rfloor},\underbrace{0,\cdots,0}_{s+b-2t};\,\underbrace{2\alpha,\cdots,2\alpha}_{\lceil\frac{s}{2}\rceil},\underbrace{-2\alpha,\cdots,-2\alpha}_{\lfloor\frac{s}{2}\rfloor}),
  		\end{displaymath}
  		where $\lceil\frac{s}{2}\rceil\leq t\leq \lfloor\frac{s+b}{2}\rfloor$.
  		
  		\item $n/2$ is odd and $|\Sigma_g/f|$ is non-orientable without boundary.
  		If $n>2$ then $h,s$ have the same parity.
  		
  		\item $n/2$ is even, and $|\Sigma_g/f|$ is non-orientable without boundary.
  		If $n>4$ then
  		\begin{displaymath}
  		h_1(f)=
  		\begin{cases}
  		-s\alpha, & \textrm{if } 2\alpha\in\{2,4,\cdots,\frac{n}{2}-2\}\subset \mathbb{Z}_n;\\
  		s\alpha, & \textrm{if } 2\alpha\in\{\frac{n}{2}+2,\frac{n}{2}+4,\cdots,n-2\}\subset\mathbb{Z}_n.
  		\end{cases}  		
  		\end{displaymath}
  		And if $n=4,s=0$, then $h_1(f)=0\in\mathbb{Z}_4$.
  	\end{enumerate}
  
  Moreover, in that case the conjugacy class of $\langle f\rangle$ is uniquely determined by $n,h,b,s,t$ if (1) happens; and by $n,h,s$ if (2) or (3) happens.
  \end{thm}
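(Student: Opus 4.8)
The plan is to establish the three parts of Theorem~\ref{thm:--} --- necessity of the listed conditions, their sufficiency, and the final uniqueness clause --- with necessity handled in an orthogonal model, sufficiency handled by explicit construction together with the Classification Theorem, and uniqueness falling out of the two. Since we work smoothly, I may assume by geometrization that $\langle\phi\rangle\subset O(4)$ acts orthogonally on $S^3$ with $\phi$ orientation-reversing; as $\det(\phi)^n=\det(\phi^n)=1$ while $\det\phi=-1$, the order $n$ is even. The unique involution $\phi^{n/2}$ satisfies $\det\phi^{n/2}=(-1)^{n/2}$, so it is orientation-reversing precisely when $n/2$ is odd; this is exactly the split between cases (1)--(2) and case (3). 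Within $n/2$ odd, whether the orientation-reversing elements act as reflections (fixing an $S^2$) or fix only an $S^0$ distinguishes (1) from (2): a reflection forces any invariant closed surface to meet the fixed $S^2$, hence to acquire a mirror circle, giving a quotient with boundary, which one checks is then orientable. For each normal form of an orientation-reversing finite cyclic subgroup of $O(4)$ I will describe $S^3/\phi$ explicitly: its singular locus is a union of cone circles arising from the fixed circles of powers of the rotation $\phi^2$, together with a mirror locus (a disk or projective plane) from the reflection-type elements.

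\emph{Necessity.} Let $\Sigma_g\subset S^3$ be a $\phi$-invariant closed surface with $\phi|_{\Sigma_g}$ identified with $f$. Its image $\Sigma_g/f$ sits in $S^3/\phi$, and the cone points resp.\ boundary curves of $\Sigma_g/f$ lie over the singular circles resp.\ the mirror locus. A cone point can only occur where a nontrivial \emph{orientation-preserving} power of $\phi$ has fixed points, and those powers form the index-two subgroup generated by $\phi^2$; a local rotation-number computation (an invariant surface near a fixed circle is locally its normal disk) shows that every $\psi(\xi_k)$ equals $\pm 2\alpha$ and every $\psi(\epsilon_j)$ equals $\pm 2\alpha$ or $0$, for one generator $\alpha$ of $\mathbb{Z}_n$ which the normal form pins down up to sign. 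Counting how $\Sigma_g$ meets the singular locus, noting that a reflection-type element pairs cone points with opposite rotation numbers, and then imposing the Riemann--Hurwitz relation $\sum_j\psi(\epsilon_j)+\sum_k\psi(\xi_k)=0$ (the image under $\psi$ of the surface relation in $\pi_1(\Sigma_g/f)$) together with surjectivity of $\psi$, forces the exact multiplicities displayed in (1): the balanced split $\lceil s/2\rceil,\lfloor s/2\rfloor$ of the $\xi$'s and the numbers $t-\lceil s/2\rceil,\ t-\lfloor s/2\rfloor,\ s+b-2t$ of $\epsilon$'s. In the closed non-orientable case the relation $2\sum_i\psi(\delta_i)+\sum_k\psi(\xi_k)=0$ with surjectivity (which needs some $\psi(\delta_i)$ odd) yields the parity statement of (2). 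In case (3), where $\phi^{n/2}$ is an ambient rotation, tracking $\sum_i\psi(\delta_i)$ through the same relation together with the definition of $\chi_k$ gives the stated value of $h_1(f)$, and the degenerate $n=4,\ s=0$ subcase is checked directly.

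\emph{Sufficiency and uniqueness.} Conversely, for every admissible data set I will exhibit an extendable model: starting from a standard orthogonal orientation-reversing action on $S^3$ carrying an invariant Heegaard surface whose isotropy invariant has the extremal shape, one performs equivariant modifications --- adding $\langle\phi\rangle$-orbits of trivially attached $1$-handles (stabilizations), and equivariant connect-sums near singular circles to create or move cone points --- to realize every $(n,h,b,s,t)$ in case (1) and every $(n,h,s)$ with the prescribed $h_1$ (and $h_2$ when the quotient has genus $2$) in cases (2)--(3), with the surface staying equivariantly embedded as a Heegaard surface; this is carried out on the basic examples of Sections~\ref{sect:examples}--\ref{sect:realizations}. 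Now given any $f$ satisfying the extension conditions, the displayed forms show that its isotropy invariant is determined up to reordering by $n$ and the parameters (the generator $\alpha$ being fixed up to sign, which for the non-orientable quotients is absorbed into the entrywise $\pm$ in the invariant), that $h_1(f)$ is the value forced above, and that $h_2(f)$ in the genus-$2$ case is likewise determined; hence by the Classification Theorem~\ref{thm:classification}, $f$ is conjugate to the constructed model. This proves at once sufficiency (a conjugate of an extendable map is extendable) and the uniqueness clause.

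\emph{Main difficulty.} I expect the heart of the argument to be the necessity bookkeeping in case~(3): extracting the precise value of $h_1(f)$ requires correlating the $\pm n/2$ dichotomy built into $\chi_k$ with the signs of the normal rotation numbers of $\Sigma_g$ along the singular circles and then closing a global sum in $\mathbb{Z}_n$, and the same delicate equivariant accounting underlies the exact multiplicity split of case~(1). A secondary difficulty, on the construction side, is verifying that the list of equivariant modifications is exhaustive, i.e.\ that every data set permitted by the conditions is actually realized by some extendable map.
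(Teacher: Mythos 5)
Your plan is sound, and its sufficiency/uniqueness half (equivariant surgeries on the basic orthogonal models, normalization of the invariants, then Theorem~\ref{thm:classification}) coincides with what the paper does in Sections~\ref{sect:examples}--\ref{sect:realizations} and Proposition~\ref{prop:conjugacy--}. Where you genuinely diverge is in the necessity argument: the paper does not carry out the orbifold bookkeeping for type $(-,-)$ at all. It proves Proposition~\ref{prop:--equivalence} --- an orientation-reversing $f$ is extendable over $S^3$ in type $(-,-)$ if and only if it is extendable over $\mathbb{R}^3$, the key point being that the standard form of an orientation-reversing torsion in $O(4)$ fixes a point $\infty$, and if $\infty\in\Sigma_g$ then $f$ is forced to be a reflection-type involution already handled by \cite{C2} --- and then simply cites Costa's classification \cite{C2,C3} of maps extendable over $\mathbb{R}^3$, declaring the conditions ``essentially the same'' and omitting the translation. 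You instead derive the conditions directly from the $O(4)$ normal form: the parity of $n/2$ via $\det\phi^{n/2}$ separating (1)--(2) from (3), the reflection-versus-$S^0$ dichotomy for $\phi^{n/2}$ separating (1) from (2), local rotation numbers along the fixed circle of $\phi^2$ giving $\psi(\xi_k)=\pm2\alpha$, and the relations $\sum_j\psi(\epsilon_j)+\sum_k\psi(\xi_k)=0$, resp.\ $2\sum_i\psi(\delta_i)+\sum_k\psi(\xi_k)=0$, plus surjectivity of $\psi$ pinning down the multiplicities and $h_1$. This buys a self-contained proof at the cost of exactly the ``delicate accounting'' you flag: the pairing of cone points under the reflection, the $\pm2\alpha$ versus $0$ labels on mirror circles producing the split $t-\lceil s/2\rceil$, $t-\lfloor s/2\rfloor$, $s+b-2t$, and the case-(3) value of $h_1(f)$ are precisely the computations the paper outsources to Costa, and your sketch asserts rather than executes them. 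One step you should make explicit: in case (1), ``the quotient with boundary is then orientable'' is not automatic for quotients of orientable surfaces by orientation-reversing involutions with fixed curves; here it holds because the mirror sphere ${\rm fix}(\phi^{n/2})$ separates $S^3$ into two balls swapped by $\phi^{n/2}$, so ${\rm fix}(f^{n/2})$ separates $\Sigma_g$ and, using $n/2$ odd to split $\langle f\rangle=\langle f^2\rangle\times\langle f^{n/2}\rangle$, one gets $|\Sigma_g/f|$ as the quotient of one (orientable, with boundary) half by the orientation-preserving group $\langle f^2\rangle$.
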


  \begin{thm}\label{thm:+-}
  	A periodic orientation-preserving map $f\in {\rm Aut}(\Sigma_g)$ is extendable over $S^3$ in type $(+,-)$ if and only if $n$ is even, $s\geq 2$, and there is a generator $\alpha$ of $\mathbb{Z}_n$, such that
  	up to rearrangement the isotropy invariant $\pm(\psi(\xi_1),\psi(\xi_2),\cdots,\psi(\xi_s))$ is 
  	\begin{displaymath}
  	\pm(\alpha,(-1)^{s-1}\alpha,-2\alpha,2\alpha,-2\alpha,2\alpha,\cdots,(-1)^{s}\cdot 2\alpha)
  	\end{displaymath}
  	(if $n=2$, then $s=2$ and the isotropy invariant is $\pm(\alpha,-\alpha)$). 
  	
  	Moreover, in that case the conjugacy class of $\langle f\rangle$ is uniquely determined by $n,h,s$.
  \end{thm}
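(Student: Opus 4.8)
The plan is to reduce the statement to two independent tasks by means of Theorem \ref{thm:classification}. First note that extendability over $S^3$ is a conjugacy invariant: if $f'=gfg^{-1}$ and $e\circ f=\phi\circ e$, then $(e\circ g^{-1})\circ f'=\phi\circ(e\circ g^{-1})$, and the type of $(f',\phi)$ equals that of $(f,\phi)$. Moreover, for an orientation-preserving periodic $f$ the quotient orbifold $|\Sigma_g/f|$ is closed and orientable, so $h_1(f),h_2(f)$ are not defined and Theorem \ref{thm:classification}(1) says $f$ is determined up to conjugacy by its isotropy invariant. Hence the theorem is equivalent to: (a) for every admissible $(n,h,s)$ there is at least one periodic map with orientable quotient of genus $h$, $s$ singular points, the stated isotropy invariant, and extendable in type $(+,-)$; and (b) every $f$ extendable in type $(+,-)$ has $n$ even, $s\geq2$, and isotropy invariant of that form. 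The ``moreover'' clause then follows immediately from (a), (b) and Theorem \ref{thm:classification}(1).

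For (a) I would build everything from the basic examples of Section \ref{sect:examples}. On $S^3=\{(z_1,z_2):|z_1|^2+|z_2|^2=1\}$, maps of the form $\phi(z_1,z_2)=(\zeta_n z_1,\bar z_2)$ and small variants are orthogonal, reverse the orientation of $S^3$, preserve the standard Heegaard tori $\{|z_1|=r\}$ and the equatorial sphere $\{z_2\in\mathbb{R}\}$, interchange the two complementary solid tori, and restrict to an orientation-preserving rotation-type map on the Heegaard surface; their isotropy invariants are computed directly and have the required shape in the lowest genera. One then applies the modifications of Section \ref{sect:realizations} --- equivariant stabilizations and equivariant insertion of handles and singular-point orbits --- checking that each keeps the ambient map orientation-reversing on a Heegaard surface and changes $(h,s)$ and the isotropy invariant exactly as needed; together with the degenerate cases $n=2$ and $g=0$ this realizes all admissible $(n,h,s)$.

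For (b), let $f=\phi|_{\Sigma_g}$ with $\phi\in{\rm Aut}(S^3)$ orientation-reversing periodic and $f$ orientation-preserving. By Theorem \ref{thm:Heegaard} we may take $\phi$ periodic, $\Sigma_g$ a Heegaard surface with $\phi(\Sigma_g)=\Sigma_g$, and (geometrization) $\phi$ orthogonal. Then $n$ is even; and comparing the orientation of $S^3$ near a generic point of $\Sigma_g$ with those of the tangent plane and the normal line shows $\phi$ reverses the normal direction, hence interchanges the two handlebodies $V_1,V_2$, and so does every odd power of $\phi$; thus $\mathrm{Fix}(\phi^k)\subset\Sigma_g$ for $k$ odd. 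By the Lefschetz fixed point theorem $\mathrm{Fix}(\phi)\neq\emptyset$, and as the fixed set of an orthogonal orientation-reversing map it is $S^0$ or $S^2$; the latter would force $f=\mathrm{id}$, impossible for even $n$, so $\mathrm{Fix}(\phi)=S^0$, producing exactly two singular points of $\Sigma_g/f$ of index $n$, and inspecting $d\phi$ there shows their $\psi$-values generate $\mathbb{Z}_n$. On the other hand $\phi^2$ is orientation-preserving of order $n/2$ and preserves the splitting, so $f^2$ is extendable in type $(+,+)$ and Theorem \ref{thm:++} constrains its isotropy invariant. Transferring this along the degree-$2$ orbifold covering $\Sigma_g/f^2\to\Sigma_g/f$ (the quotient by the involution induced by $f$), under which an index-$n$ singular point lifts to one point and an index-$n_i<n$ one lifts to two, and using that the singular indices of $\Sigma_g/f^2$ take at most two coprime values, one shows every singular point of $\Sigma_g/f$ other than the two of index $n$ has index exactly $n/2$. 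Finally the sign-balancing forced by Theorem \ref{thm:++} on $f^2$, the relation $\sum_k\psi(\xi_k)=0$ in $\mathbb{Z}_n$, and the orientation-reversing structure of $\phi$ (equivalently, the description of $S^3/\langle\phi\rangle$ via an orientation-reversing involution of a lens space, Section \ref{sect:surfaces-in-lens-space}) pin the isotropy invariant down to $\pm(\alpha,(-1)^{s-1}\alpha,-2\alpha,2\alpha,\cdots,(-1)^{s}\cdot2\alpha)$ for a generator $\alpha$ of $\mathbb{Z}_n$, with $s\geq2$ coming from the two index-$n$ points.

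I expect the crux to be the last step of (b): upgrading the symmetric pattern of the isotropy invariant of $f^2$ (living in $\mathbb{Z}_{n/2}$) to the exact pattern for $f$ (living in $\mathbb{Z}_n$), determining simultaneously the alternating signs on the index-$(n/2)$ points and the value $(-1)^{s-1}\alpha$ at the second index-$n$ point. This requires an explicit computation of the local rotation numbers of $\phi$ along $\mathrm{Fix}(\phi)$ and of how the induced involution pairs the lifted singular points, and it splits naturally into cases according to $n\bmod4$ (whether $\phi^{n/2}$ is itself orientation-reversing) and the small values $n=2,4$. On the sufficiency side the only real work is verifying that the modifications of Section \ref{sect:realizations} reach every admissible $(n,h,s)$ without introducing extra singular points.
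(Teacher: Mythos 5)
The main problem is in the sufficiency direction, and it is a parity obstruction you have not noticed. Your base examples (the equatorial sphere $\{\,\mathrm{Re}\text{-type slice}\,\}$ fixed by $(z_1,z_2)\mapsto(\zeta_nz_1,\bar z_2)$, or Heegaard tori) only produce $s=2$: the two index-$n$ singular points are forced (they are $\mathrm{Fix}(\phi)=S^0\subset\Sigma_g$), and every further singular point comes from a transverse intersection of the quotient surface with the \emph{interior} of the arc $\rho(Z)$ (the image of the rotation axis), whose endpoints $\rho(0),\rho(\infty)$ already lie on the surface. The equivariant modifications of Section \ref{sect:realizations} that you invoke are isotopies in the quotient, and an isotopy changes the number of interior intersection points with that arc by an even amount (equivariantly, tangencies on $Z\setminus\{0,\infty\}$ come in $\phi$-orbits of size $2$, so $|\Sigma_g\cap Z|$ jumps by multiples of $4$). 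Hence $s\bmod 2$ is invariant under your surgeries, and all odd-$s$ cases of the theorem are unreachable from your starting data. This is exactly why the paper needs the separate base construction of Example \ref{ex:+-}(2): a genus-$(n/2-1)$ Heegaard surface built as the equidistance surface between two interleaved graphs $\Gamma_1,\Gamma_2$ interchanged by $\phi$, which realizes $s=3$. Your proposal supplies no such example. (A smaller error: $(z_1,z_2)\mapsto(\zeta_nz_1,\bar z_2)$ preserves $|z_1|$ and $|z_2|$, so it does \emph{not} interchange the two solid tori of a Heegaard torus $\{|z_1|=r\}$; its restriction to such a torus is orientation-reversing, i.e.\ type $(-,-)$, not $(+,-)$.)

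On necessity your route is different from the paper's and more roundabout. The paper simply puts the orientation-reversing torsion $\phi\in O(4)$ into its normal form (rotation about the $z$-axis composed with reflection in the $xy$-plane on $\mathbb{R}^3\cup\{\infty\}$), observes that $0,\infty\in\Sigma_g$ because $\phi$ swaps the sides, rules out $\gcd(n,p)=2$ because $\mathrm{fix}(\phi^{n/2})=S^2$ would make $f^{n/2}$ orientation-reversing, and then reads off all $\psi$-values directly from the geometry of the axis. Your Lefschetz argument and the reduction to $f^2$ via Theorem \ref{thm:++} are correct as far as they go, but note that the ``coprime indices of $\Sigma_g/f^2$'' step does not by itself exclude index-$2$ singular points of $\Sigma_g/f$ when $n\equiv 2\pmod 4$ (a stabilizer $\{1,f^{n/2}\}$ with $n/2$ odd meets $\langle f^2\rangle$ trivially and becomes invisible in $\Sigma_g/f^2$); you still need the $\mathrm{fix}(\phi^{n/2})\neq S^2$ argument, which the normal form gives for free. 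Likewise, that all index-$(n/2)$ points carry values $\pm2\alpha$ for the \emph{same} generator $\alpha$, with alternating signs, is not a consequence of the group-theoretic constraints alone when $n$ has several units — it comes from the fact that they all lie on the single axis $Z$ and the local rotation number of $\phi^2$ along $Z$ is constant, i.e.\ precisely the explicit computation you defer. So the necessity half is a workable alternative outline with its crux correctly identified, but the sufficiency half has a genuine missing construction.
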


  \begin{thm}\label{thm:-+}
  	A periodic orientation-reversing map $f\in {\rm Aut}(\Sigma_g)$ is extendable over $S^3$ in type $(-,+)$ if and only if one of the followings happens:
  	\begin{enumerate}
  		\item $n/2$ is odd, and $|\Sigma_g/f|$ is orientable with non-empty and connected boundary (i.e., $b=1$). If $n>2$, then $s$ is odd and there exists a generator $\alpha$ of $\mathbb{Z}_n$, such that up to rearrangement the isotropy invariant $\pm(\psi(\epsilon_1);\,\psi(\xi_1),\psi(\xi_2),\cdots,\psi(\xi_s))$ is 
  		\begin{displaymath}
  		(2\alpha;\,\underbrace{2\alpha,\cdots,2\alpha}_{(s-1)/2},\underbrace{-2\alpha,-2\alpha,\cdots,-2\alpha}_{(s+1)/2}).
  		\end{displaymath}
  	
	  	\item $n/2$ is odd, and $|\Sigma_g/f|$ is non-orientable with non-empty and connected boundary (i.e., $b=1$). If $s>0$, then $s$ is odd, 
	  	and there exist a factor $l$ of $n/2$ and a generator $\alpha$ of $\mathbb{Z}_n$, such that the isotropy invariant $(\pm\psi(\epsilon_1);\,\pm\psi(\xi_1),\cdots,\pm\psi(\xi_s))$ is 
	  	\begin{displaymath}
	  	(\pm 2\alpha;\,\pm 2l\alpha,\pm 2l\alpha,\cdots,\pm 2l\alpha).
	  	\end{displaymath}
	  	
	  	\item $|\Sigma_g/f|$ is non-orientable without boundary (i.e., $b=0$), then up to rearrangement the isotropy invariant $(\pm\psi(\xi_1),\cdots,\pm\psi(\xi_s))$ is 
	  	\begin{displaymath}
	  	(\underbrace{\pm\beta,\cdots,\pm\beta}_{t},\underbrace{\pm\gamma,\cdots,\pm\gamma}_{s-t}),
	  	\end{displaymath}
	  	where $0\leq t\leq s$ and $\beta,\gamma\in\mathbb{Z}_n$ has coprime orders $p,q$ respectively. If $t=0$ then set $p=1$, otherwise $t$ should be odd; if $t=s$ then set $q=1$, otherwise $s-t$ should be odd. Let $n=pql$, then $l$ is even and $l/2$ has the same parity with $h$. If $h=1$ then $l=2$. Besides, 
	  	the following conditions hold:	
	  	\begin{enumerate}[(i)]
	  		\item If $h_1(f)$ is defined, i.e., $n/2$ is even and $2\notin\{p,q\}$, then $h_1(f)\equiv l/2\,({\rm mod}\,l)$. 
	  		
	  		\item If $h=2$, without loss of generality, assume $p$ is odd. Then $f$ is conjugate to some power of the map $f_0$ whose invariants are:
	  		\begin{displaymath}
	  		\text{isotropy invariant}=(\underbrace{\pm ql,\cdots,\pm ql}_{t},\underbrace{\pm pl,\cdots,\pm pl}_{s-t});
	  		\end{displaymath}
	  		\begin{displaymath}
	  		h_1(f_0)\text{ (if defined)}=\frac{n}{2}-\max\{t,1\}\frac{ql}{2}-\max\{s-t,1\}\frac{pl}{2}\in\mathbb{Z}_{n},
	  		\end{displaymath}
	  		and
	  		\begin{displaymath}
	  		h_2(f_0)=\{\pm k, \pm(l/2-k)\}\subset\mathbb{Z}_{l/2},
	  		\end{displaymath}
	  		where $k$ is the smallest positive integer that satisfies 
	  		\begin{displaymath}
	  		\left\{
	  		\begin{array}{l}
	  		k \equiv qm_0 \,({\rm mod}\,p),\\
	  		k \equiv p \,({\rm mod}\,2q),\\
	  		{\rm gcd}(k,n) =1,
	  		\end{array} 
	  		\right.
	  		\end{displaymath}
	  		and $m_0$ is the smallest positive integer that satisfies 
	  		\begin{displaymath}
	  		\left\{\begin{array}{l}
	  		m_0\equiv l/2+1 \,({\rm mod}\,l),\\
	  		{\rm gcd}(m_0,p)=1.
	  		\end{array}
	  		\right.
	  		\end{displaymath}
	  		
	  	\end{enumerate}
  	\end{enumerate}
  
  Moreover, the conjugacy class of such $\langle f\rangle$ is uniquely determined by the parameters $n,h,s$ if (1) happens; by $n,h,s,l$ if (2) happens; and by $n,h,s,t,p,q$ if (3) happens. 
  \end{thm}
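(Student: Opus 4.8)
The plan is to establish the three biconditionals together with the uniqueness clause in two stages, sufficiency (via explicit constructions, as in Sections \ref{sect:examples} and \ref{sect:realizations}) and necessity (via a case analysis, as in Section \ref{sect:necessity}), using at the outset the following normalization. Since we are in the smooth category, a periodic $\phi_0\in{\rm Aut}(S^3)$ is conjugate to an orthogonal action; and for the classification it is convenient (though not strictly necessary) to invoke Theorem \ref{thm:Heegaard} to replace the given embedding by a Heegaard surface $\Sigma_g=\partial V_1=\partial V_2$, keeping $\phi$ orientation-preserving. As $\phi$ preserves the orientation of $S^3$ while $f=\phi|_{\Sigma_g}$ reverses that of the separating, two-sided surface $\Sigma_g$, the map $\phi$ must interchange $V_1$ and $V_2$; hence $n$ is even, $\langle\phi^2\rangle$ preserves each $V_i$, and $\phi$ carries the $\langle\phi^2\rangle$-action on $V_1$ to the one on $V_2$. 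The quotient $\mathcal O=S^3/\langle\phi\rangle$ is then a spherical lens orbifold with $\pi_1^{\mathrm{orb}}(\mathcal O)\cong\mathbb Z_n$, Seifert fibred over $S^2$ with at most two cone points (images of the core circles of an invariant Heegaard torus), and the crucial structural fact is that $\Sigma_g/f$ embeds in $\mathcal O$ as a \emph{non-separating} suborbifold inducing $\psi$ through $\pi_1^{\mathrm{orb}}(\Sigma_g/f)\to\pi_1^{\mathrm{orb}}(\mathcal O)\xrightarrow{\ \cong\ }\mathbb Z_n$. Thus extendability in type $(-,+)$ is equivalent to the existence of such an embedding, and the three cases of the theorem are exactly the three possibilities for the homeomorphism type of $|\Sigma_g/f|$: orientable with connected boundary, non-orientable with connected boundary, and non-orientable closed.

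For sufficiency I would construct in Section \ref{sect:examples} the basic orthogonal models: linear $\mathbb Z_n$-actions on $S^3\subset\mathbb C^2$ interchanging the two pieces of the standard genus-$1$ Heegaard splitting, chosen so that the three quotient types occur and so that, in case $(3)$, the coprime orders $p$ and $q$ appear as the orders of the two cone points of the base of $\mathcal O$. Then, as in Section \ref{sect:realizations}, I would realise every admissible choice of the remaining parameters ($g$, $s$, $t$, $l$) by equivariant modifications --- inserting $\langle\phi\rangle$-orbits of tubes along invariant arcs, and equivariant connected sums --- each of which alters the invariants predictably. Applying the Classification Theorem \ref{thm:classification} one then checks that the resulting maps carry precisely the isotropy invariants in $(1)$--$(3)$ and computes $h_1(f)$, and, when $h=2$, $h_2(f)$; the latter requires identifying the order $m$ map $f_m$ on $|\Sigma_g/f^m|$ and leads to the stated $h_2(f_0)=\{\pm k,\pm(l/2-k)\}$ with $k,m_0$ determined by the simultaneous congruences.

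For necessity I would analyze, case by case, the orientation-preserving periodic orthogonal maps of $S^3$ that interchange a Heegaard splitting. The fixed set of such a map, or of any power, is either empty or a single circle; together with the requirements that $\Sigma_g/f$ be non-separating in $\mathcal O$ and that $\psi$ be onto, this pins down the possibilities. The orientability of $|\Sigma_g/f|$ and the location of the fixed circles of the orientation-reversing powers of $f$ (inside a handlebody, on $\Sigma_g$, or interchanged by $\phi$) give the trichotomy: in $(1)$ and $(2)$ one gets $n/2$ odd, $b=1$, $s$ odd for $n>2$ (resp.\ $s>0$), and the displayed isotropy invariants, with $l$ in $(2)$ read off from the index of the singular points relative to the boundary. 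In $(3)$ I would apply Theorem \ref{thm:++} to $f^2=\phi^2|_{\Sigma_g}$, which is extendable in type $(+,+)$ and preserves the splitting; its block-form isotropy invariant, together with the residual $\mathbb Z_2=\langle\phi\rangle/\langle\phi^2\rangle$, forces the form $(\pm\beta,\dots;\pm\gamma,\dots)$ for $f$, the coprimality of $p,q$, and $n=pql$. The homology of the lens space underlying $\mathcal O$ yields that $l$ is even with $l/2\equiv h\pmod 2$ (and $l=2$ if $h=1$), and following how $\psi$ descends along $\Sigma_g\to\Sigma_g/f^2\to\mathcal O$ gives $h_1(f)\equiv l/2\pmod l$ and, when $h=2$, the value of $h_2(f_0)$.

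Finally, the uniqueness clause follows by checking in each case that the listed parameters determine the isotropy invariant up to reordering, as well as $h_1$ and $h_2$ when defined, so that Theorem \ref{thm:classification} and Proposition \ref{prop:conjugacy} identify all extendable maps with the same parameters. The step I expect to be the main obstacle is the necessity half of case $(3)$ with $h=2$: extracting $h_2(f)$ demands careful control of the induced map $f_m$ on the intermediate quotient orbifold and of the coupled congruences defining $k$ and $m_0$; and one must also make sure the case analysis over the orthogonal cyclic actions interchanging a Heegaard splitting is genuinely exhaustive, which is where the reduction to Heegaard surfaces (Theorem \ref{thm:Heegaard}) keeps the list finite and explicit.
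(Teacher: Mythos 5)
Your overall architecture (sufficiency by equivariant surgery on orthogonal models, necessity by normalizing $\phi$ to an orthogonal rotation and studying $|\Sigma_g/f|$ inside the quotient lens space, uniqueness via Theorem \ref{thm:classification}) matches the paper's. But two steps of your necessity argument have genuine gaps. First, you propose to invoke Theorem \ref{thm:Heegaard} to replace the given embedding by a Heegaard surface, and at the end you lean on this reduction to make the case analysis ``finite and explicit.'' In this paper Theorem \ref{thm:Heegaard} is a \emph{corollary} of the main theorems (necessity proved for arbitrary embeddings, plus sufficiency realized on Heegaard surfaces), so using it in the necessity direction is circular. The analysis must be carried out for an arbitrary $\phi$-invariant embedded surface; the paper does this by putting $\phi$ in the normal form $(w_1,w_2)\mapsto(w_1e^{2\pi i m_1/n},w_2e^{2\pi i m_2/n})$ and tracking how $\Sigma_g$ meets the two axis circles and the fixed sets of powers of $\phi$, with no Heegaard hypothesis anywhere.

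Second, you derive ``$l$ is even with $l/2\equiv h\pmod 2$ (and $l=2$ if $h=1$)'' from ``the homology of the lens space.'' Homology (the $\mathbb{Z}_2$-fundamental class of the non-orientable quotient surface) only gives that $l$ is even. The parity relation between $l/2$ and $h$, the fact that $\mathbb{RP}^2$ embeds only in $L(2,1)$, that Klein bottles embed only in $L(4r,2r\pm 1)$, and --- crucially for computing $h_2$ when $h=2$ --- the uniqueness up to isotopy of the Klein bottle embedding, are the Bredon--Wood theorems (Lemmas \ref{lem:intersections}, \ref{lem:parity}, \ref{lem:closed-surface}, \ref{lem:unique-embedding} and Proposition \ref{prop:torsion}); these are substantive geometric inputs, not homological formalities, and without them the constraints in case (3), including $h_1(f)\equiv l/2\,({\rm mod}\,l)$ and the value of $h_2(f_0)$, cannot be pinned down. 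You correctly flag the $h=2$ subcase as the main obstacle, but the missing ingredient there is precisely the uniqueness-of-embedding lemma, which reduces the computation of $h_2$ to the single model worked out in Example \ref{ex:-+Klein}. Your idea of applying Theorem \ref{thm:++} to $f^2$ in case (3) is a workable alternative to the paper's direct count of intersections with the singular circles, but it does not by itself supply these lens-space embedding restrictions.
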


\section{Basic examples of extendable maps}\label{sect:examples}
  Before proving the main theorems, we give some basic examples of extendable maps. In each of them we choose a $\phi\in{\rm Aut}(S^3)$ of order $n$ and an embedded surface $\Sigma$ which is $\phi$-invariant, thus $\phi$ induces a surface automorphism $\phi|_\Sigma$. 
  In next section we will show that every periodic extendable map can be constructed from these examples just with some inessential modifications. 
  
  Two models for $S^3$ will be used. One is $\mathbb{R}^3\cup\{\infty\}$ where $\phi$ acts as an element of $O(3)$. The other is the unit sphere in $\mathbb{C}^2$:
  \begin{displaymath}
  \{(w_1,w_2)\in\mathbb{C}^2:|w_1|^2+|w_2|^2=1\}.
  \end{displaymath}
  Write $w_1=x_1+iy_1,w_2=x_2+iy_2,x_1,y_1,x_2,y_2\in\mathbb{R}$, then $\phi$ acts as an element of $O(4)$ on $S^3\subset\mathbb{C}^2\cong\mathbb{R}^4\cong\mathbb{R}\langle x_1,y_1,x_2,y_2\rangle$.
  The two models are connected by a stereographic projection in $\mathbb{C}^2\cong\mathbb{R}^4$:
  \begin{displaymath}
  \begin{array}{ccc}
  \{(w_1,w_2)\in\mathbb{C}^2:|w_1|^2+|w_2|^2=1\} &\to &\mathbb{R}^3\cup\{\infty\}\\
  (x_1+iy_1,x_2+iy_2) &\mapsto &(\frac{x_1}{1+x_2},\frac{y_1}{1+x_2},\frac{y_2}{1+x_2}).\\
  \end{array}
  \end{displaymath} 
  It identifies the points $(0,-1),(0,1)\in\mathbb{C}^2$, the circles
  \begin{align*}
    &\{(w_1,w_2)\in \mathbb{C}^2:w_1=0,|w_2|=1\},\\
    &\{(w_1,w_2)\in\mathbb{C}^2:|w_1|=1,w_2=0\},
  \end{align*} 
  and the sphere 
  \begin{displaymath}
    \{(w_1,w_2)\in\mathbb{C}^2:{\rm Re}(w_2)=0,|w_1|^2+|w_2|^2=1\}
  \end{displaymath}
  in the second model to $\infty,(0,0,0)$, $z$-axis$\,\cup\{\infty\}$, the unit circle on the $xy$-plane and the unit sphere in $\mathbb{R}^3\cup\{\infty\}$. For convenience, denote them by $\infty,0,Z,S^1,S^2$ respectively. 
  
\subsection{Type $(+,+)$}\label{subsect:example++}
  \begin{example}\label{ex:++}
  	$\phi$ acts on $S^3\subset\mathbb{C}^2$ as $\phi(w_1,w_2)=(w_1e^{\frac{2\pi i}{n}},w_2e^{\frac{2\pi i}{n}})$, i.e., a $\frac{2\pi}{n}$-rotation on both coordinate components. Let $\Sigma$ be the torus
  	\begin{displaymath}
  	T=\{(w_1,w_2)\in\mathbb{C}^2:|w_1|=|w_2|=\frac{\sqrt{2}}{2}\}.
  	\end{displaymath}
  	As long as $n>1$ (as is always assumed), $\phi^m(m=1,2,\cdots,n-1)$ has no fixed point on $S^3$, thus $\Sigma/\phi$ is a closed orientable surface, and by the Riemann-Hurwitz formula (or just by topological observation) we know it is a torus.
  	So $\phi|_\Sigma$ satisfies the conditions in Theorem \ref{thm:++} with $g=1,h=1,s=t=0,p=q=1$.
  \end{example}

\subsection{Type $(-,-)$}\label{subsect:example--}
  \begin{example}\label{ex:--}
  	$S^3=\mathbb{R}^3\cup\{\infty\}$. $n$ is even. $\phi\in O(3)$ has the matrix
  	\begin{displaymath}
  	\left(
  	\begin{array}{ccc}
  	\cos\frac{2p\pi}{n} & -\sin\frac{2p\pi}{n} & \\
  	\sin\frac{2p\pi}{n} & \cos\frac{2p\pi}{n} & \\
  	& & -1  
  	\end{array}
  	\right),
  	\end{displaymath}
  	i.e., $\phi$ is the composition of a $\frac{2p\pi}{n}$-rotation around the $z$-axis and a reflection across the $xy$-plane.
  	
  	(1) Let $\Sigma$ be the unit sphere
  	 \begin{displaymath}
  	 S^2=\{(x,y,z)\in\mathbb{R}^3:x^2+y^2+z^2=1\}.
  	 \end{displaymath}
  	 
  	\begin{enumerate}[\hspace{1.5em}(i)]
  		\item If $p=1$, then $\phi|_\Sigma$ satisfies Theorem \ref{thm:--} (2) or (3) with $g=0,h=1,s=1$ and $\alpha=1$. Here we do not check $h_1(\phi|_\Sigma)$ for it must be as in the theorem, according to Section \ref{sect:necessity}. In the degenerate case $n=2$, actually we have $s=0$, which is inessential in our discussion. Similarly, we will also omit the calculation of the invariants $h_1,h_2$ and ignore the degenerate cases below.
  		\item If $p=2$ and $n/2$ is odd, then $\phi|_\Sigma$ satisfies Theorem \ref{thm:--} (1) with
  		$g=0,h=0,b=1,s=1,t=1$ and $\alpha=1$.
  	\end{enumerate}
  	
  	(2) Let $\Sigma$ be the boundary of a $\phi-$invariant regular neighborhood of the circle 
  	\begin{displaymath}
  	S^1=\{(x,y,z)\in\mathbb{R}^3:z=0,x^2+y^2=1\}.
  	\end{displaymath}
  	For instance, $\Sigma$ can be chosen as the torus $T$ in Example \ref{ex:++}. 
  	\begin{enumerate}[\hspace{1.5em}(i)]
  		\item If $p=1$, then $\phi|_\Sigma$ satisfies Theorem \ref{thm:--} (2) or (3) with
  		$g=1,h=2,s=0$ and $\alpha=1$.
  		\item If $p=2$ and $n/2$ is odd, then $\phi|_\Sigma$ satisfies Theorem \ref{thm:--} (1) with
  		$g=1,h=0,b=2,s=0,t=1$ and $\alpha=1$.
  	\end{enumerate}
  
  	\begin{figure}[htbp]
  		\centering
  		\setlength{\unitlength}{1bp}%
  		\begin{picture}(298.60, 146.24)(0,0)
  			\put(0,0){\includegraphics{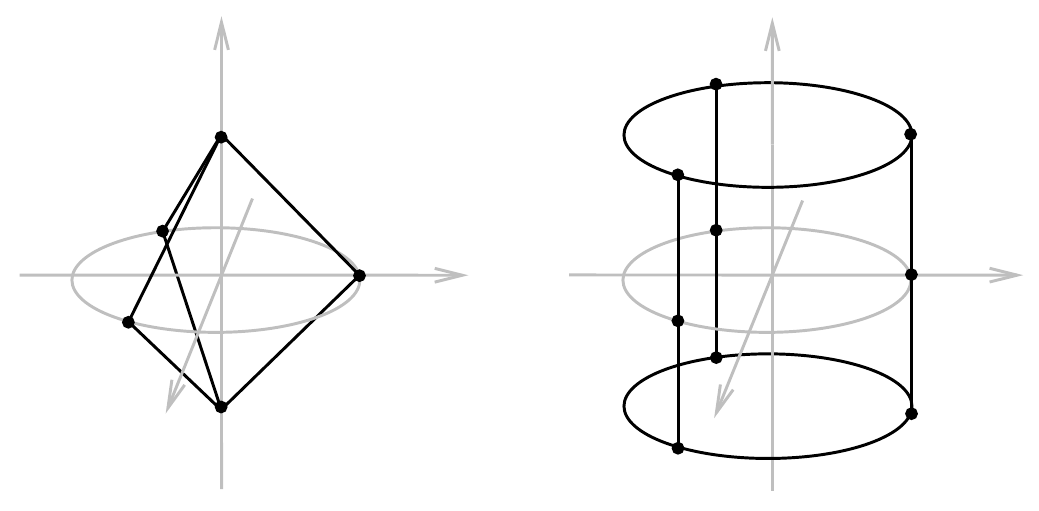}}
  			\put(55.16,104.45){\fontsize{8.83}{10.60}\selectfont $1$}
  			\put(67.18,132.02){\fontsize{8.83}{10.60}\selectfont $z$}
  			\put(42.84,31.61){\fontsize{8.83}{10.60}\selectfont $x$}
  			\put(120.13,60.40){\fontsize{8.83}{10.60}\selectfont $y$}
  			\put(83.62,92.33){\fontsize{8.83}{10.60}\selectfont $\beta$}
  			\put(225.82,132.33){\fontsize{8.83}{10.60}\selectfont $z$}
  			\put(241.83,124.17){\rotatebox{0.00}{\fontsize{8.83}{10.60}\selectfont \smash{\makebox[0pt][l]{$C_+$}}}}
  			\put(210.85,25.93){\fontsize{8.83}{10.60}\selectfont $x$}
  			\put(288.04,60.40){\fontsize{8.83}{10.60}\selectfont $y$}
  			\put(265.53,84.21){\fontsize{8.83}{10.60}\selectfont $\beta'$}
  			\put(16.95,74.48){\fontsize{8.83}{10.60}\selectfont \textcolor[rgb]{0.75294, 0.75294, 0.75294}{$S^1$}}
  		\end{picture}%
  		\caption{\label{fig:--_graphs}%
  			$\phi-$invariant graphs ($n=6$).}
  	\end{figure}
  
	(3) Suppose $p=2$ and $n/2$ is odd. Let $\beta$ be the line segment connecting the points $(0,0,1),(0,1,0)\in\mathbb{R}^3$, then the union of $\phi^m(\beta)(1\leq m\leq n)$ is a connected graph (Figure \ref{fig:--_graphs}, left). Choose a $\phi-$invariant regular neighborhood and let $\Sigma$  be its boundary, then $\phi|_\Sigma$ satisfies Theorem \ref{thm:--} (1) with $g=n/2-1,h=0,b=1,s=2,t=1$ and $\alpha=1$.
	
	(4) Suppose $p=2$ and $n/2$ is odd. Let $C_+$ be the circle 
	\begin{displaymath}
	\{(x,y,z)\in\mathbb{R}^3:z=1,x^2+y^2=1\}
	\end{displaymath}
	and $\beta'$ be the line segment connecting the points $(0,1,1),(0,1,0)\in\mathbb{R}^3$, then the union of $\phi^m(C_+\cup\beta')(1\leq m\leq n)$ is a connected graph (Figure \ref{fig:--_graphs}, right). Choose a $\phi-$invariant regular neighborhood and let $\Sigma$  be its boundary, then $\phi|_\Sigma$ satisfies Theorem \ref{thm:--} (1) with $g=n/2+1,h=1,b=1,s=0,t=0$.
  \end{example}
  
\subsection{Type $(+,-)$}\label{subsect:example+-}
  \begin{example}\label{ex:+-}
  	$S^3=\mathbb{R}^3\cup\{\infty\}$. $\phi\in O(3)$ has the same matrix as in the last example with $n$ even and $p=1$.
  	
  	(1) Let $\Sigma$ be the sphere $xy-$plane$\,\cup\{\infty\}$. Then $\phi|_\Sigma$ satisfies the conditions in Theorem \ref{thm:+-} with $g=0,h=0,b=0,s=2$ and $\alpha=1$.
  	
  	\begin{figure}[htbp]
  		\centering
  		\setlength{\unitlength}{1bp}%
  		\begin{picture}(353.50, 178.73)(0,0)
  			\put(0,0){\includegraphics{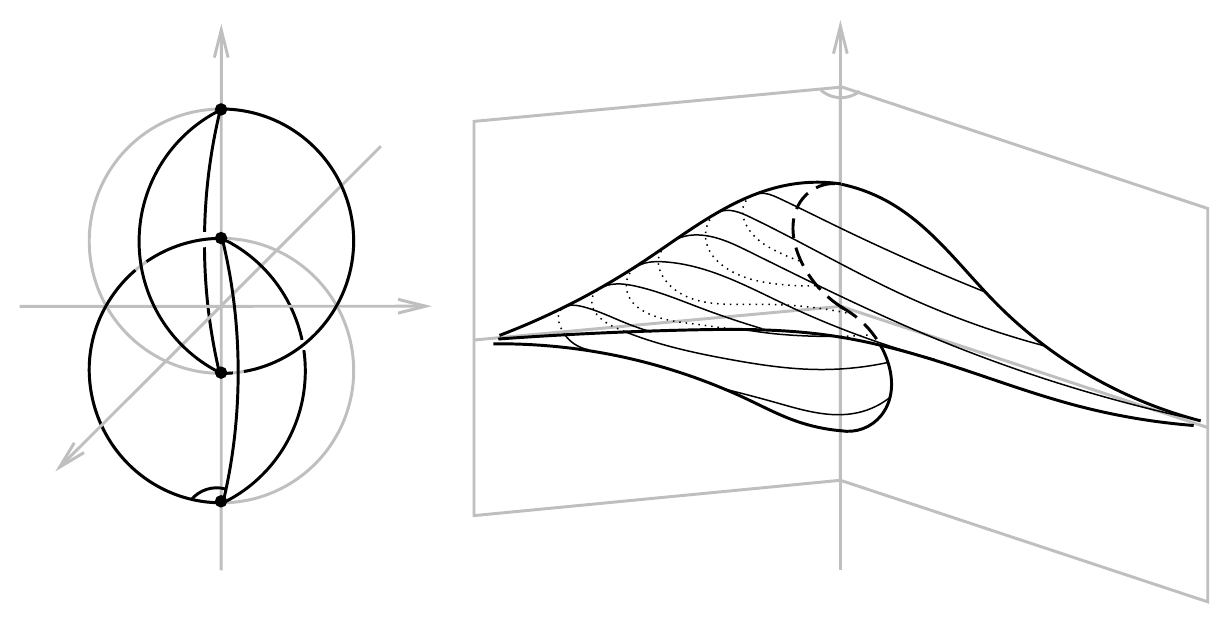}}
  			\put(12.65,46.71){\fontsize{8.83}{10.60}\selectfont $x$}
  			\put(114.54,95.62){\fontsize{8.83}{10.60}\selectfont $y$}
  			\put(66.18,166.00){\fontsize{8.83}{10.60}\selectfont $z$}
  			\put(50.45,42.17){\fontsize{8.83}{10.60}\selectfont $\frac{4\pi}{n}$}
  			\put(87.00,142.45){\fontsize{8.83}{10.60}\selectfont $\Gamma_1$}
  			\put(32.03,32.70){\fontsize{8.83}{10.60}\selectfont $\Gamma_2$}
  			\put(65.42,113.43){\fontsize{8.83}{10.60}\selectfont $1$}
  			\put(52.23,63.72){\fontsize{8.83}{10.60}\selectfont $-1$}
  			\put(230.80,141.58){\fontsize{8.83}{10.60}\selectfont $\frac{2\pi}{n}$}
  			\put(244.81,166.16){\fontsize{8.83}{10.60}\selectfont $z$}
  			\put(243.48,46.57){\fontsize{8.83}{10.60}\selectfont $-2$}
  			\put(243.48,128.19){\fontsize{8.83}{10.60}\selectfont $2$}
  			\put(281.43,101.02){\fontsize{8.83}{10.60}\selectfont $\Sigma$}
  		\end{picture}%
  		\caption{\label{fig:+-_graph}%
  			A $\phi-$invariant graph with two components $(n=6)$ and the equidistance surface in a fundamental domain.}
  	\end{figure}
  	
  	(2) Choose an arc
  	\begin{displaymath}
  	\gamma=\{(x,y,z)\in\mathbb{R}^3:y^2+(z-1)^2=4,y\geq 0,x=0\},
  	\end{displaymath}
  	and denote $\Gamma_1=\bigcup\limits_{m=1}^{n/2}\phi^{2m}(\gamma),\,\Gamma_2=\phi(\Gamma_1)$, see Figure \ref{fig:+-_graph}.
  	Let $\Sigma$ be defined by
  	\begin{displaymath}
  	\{X\in\mathbb{R}^3:{\rm dist}(X,\Gamma_1)={\rm dist}(X,\Gamma_2)\}\cup\{\infty\},
  	\end{displaymath}
  	where ${\rm dist}$ is the Euclidean distance in $\mathbb{R}^3$. 
  	The two components of $S^3-\Sigma$ are regular neighborhoods of $\Gamma_1,\Gamma_2$, so $\Sigma$ is a Heegaard surface of genus $n/2-1$. $\phi$ exchanges $\Gamma_1,\Gamma_2$ thus preserves $\Sigma$.
  	If $n\geq 4$, there are three singular points in the quotient orbifold $\Sigma/\phi$, which are provided by the orbits $\{(0,0,0)\},\{(0,0,\pm 2)\},\{\infty\}$; if $n=2$, the orbit $\{(0,0,\pm 2)\}$ gives a regular point so there are only two singular points. Now by the Riemann-Hurwitz formula we see that $\phi|_\Sigma$ satisfies the conditions in Theorem \ref{thm:+-} with $g=n/2-1,h=0,b=0,s=3$ (if $n=2$ then $s=2$) and $\alpha=1$.
  	\end{example}
  
\subsection{Type $(-,+)$}\label{subsect:example-+}
  \begin{example}\label{ex:-+_1}
  	$S^3\subset\mathbb{C}^2,\phi(w_1,w_2)= (w_1e^{\frac{2\pi i}{n/2}},-w_2)$, $n$ is even and $n/2$ is odd. Let $\Sigma$ be the sphere 
  	\begin{displaymath}
  	S^2=\{(w_1,w_2)\in S^3\subset\mathbb{C}^2:{\rm Re}(w_2)=0\}.
  	\end{displaymath}
  	Then $\phi|_\Sigma$ satisfies Theorem \ref{thm:-+} (1) with $g=0,h=0,b=1,s=1$ (if $n=2$ then $s=0$).
  \end{example}

  \begin{example}\label{ex:-+_2}
  	$\Sigma$ is chosen as the union of $\infty$ and a plane in $\mathbb{R}^3$ passing through the origin. Let $\phi$ act on $S^3\subset\mathbb{C}^2$ as $\phi(w_1,w_2)=(-w_1,-w_2)$. 
  	Then $\phi|_\Sigma$ satisfies Theorem \ref{thm:-+} (3) with $g=0,h=1,b=0,s=t=0,p=q=1,l=n=2$. 
  \end{example}

  \begin{example}\label{ex:-+_3}
  	The torus 
  	\begin{displaymath}
  	T=\{(w_1,w_2)\in\mathbb{C}^2:|w_1|=|w_2|=\frac{\sqrt{2}}{2}\}
  	\end{displaymath}
  	splits $S^3\subset\mathbb{C}^2$ into two solid tori:
  	\begin{displaymath}
  	\begin{split}
  	V_1 &=\{(w_1,w_2)\in S^3\subset\mathbb{C}^2:|w_1|\geq \frac{\sqrt{2}}{2}\},\\
  	V_2 &=\{(w_1,w_2)\in S^3\subset\mathbb{C}^2:|w_2|\geq \frac{\sqrt{2}}{2}\}.
  	\end{split}
  	\end{displaymath}
  	The $(2,2)-$torus link 
  	\begin{displaymath}
  	L=\{(\frac{\sqrt{2}}{2}e^{i\theta},\pm\frac{\sqrt{2}}{2}e^{i\theta}):0\leq \theta\leq 2\pi\}
  	\end{displaymath}
  	on $T$ bounds an annulus in $V_1$:
  	\begin{displaymath}
  	\{(r e^{i\theta},\pm\sqrt{1-r^2}e^{i\theta}):0\leq\theta\leq 2\pi,\frac{\sqrt{2}}{2}\leq r\leq 1\},
  	\end{displaymath}
  	which devides $V_1$ into two solid tori $V_{1,1},V_{1,2}$, see Figure \ref{fig:-+_(2,2)-link}. 
  	Fix a positive number $d$, and cut $V_2$ into $2d$ cylinders
  	\begin{displaymath}
  	C_k=\{(w_1,w_2)\in S^2:|w_2|\geq\frac{\sqrt{2}}{2}, \frac{(k-1)\pi}{d}\leq{\rm Arg}(w_2)<\frac{k\pi}{d}\} (1\leq k\leq 2d).
  	\end{displaymath}
  	Define 
  	\begin{displaymath}
  	\begin{split}
  	H_1&=V_{1,1}\cup C_1\cup C_3\cup \cdots\cup C_{2d-1};\\
  	H_2&=V_{1,2}\cup C_2\cup C_4\cup \cdots\cup C_{2d},
  	\end{split}
  	\end{displaymath}
  	and let $\Sigma$ be $\partial H_1=\partial H_2$. For each $k=1,2,\cdots,2d$, $L$ devides the annulus $T\cap\partial C_k$ into two disks, so each of $V_{1,1},V_{1,2}$ intersects $C_k$ at a disk, which implies $H_1,H_2$ are both solid tori and $\Sigma$ is a genus $1$ Heegaard surface of $S^3$.
  	
  	\begin{figure}[htbp]
  		\centering
  		\setlength{\unitlength}{1bp}%
  		\begin{picture}(338.58, 152.85)(0,0)
  			\put(0,0){\includegraphics{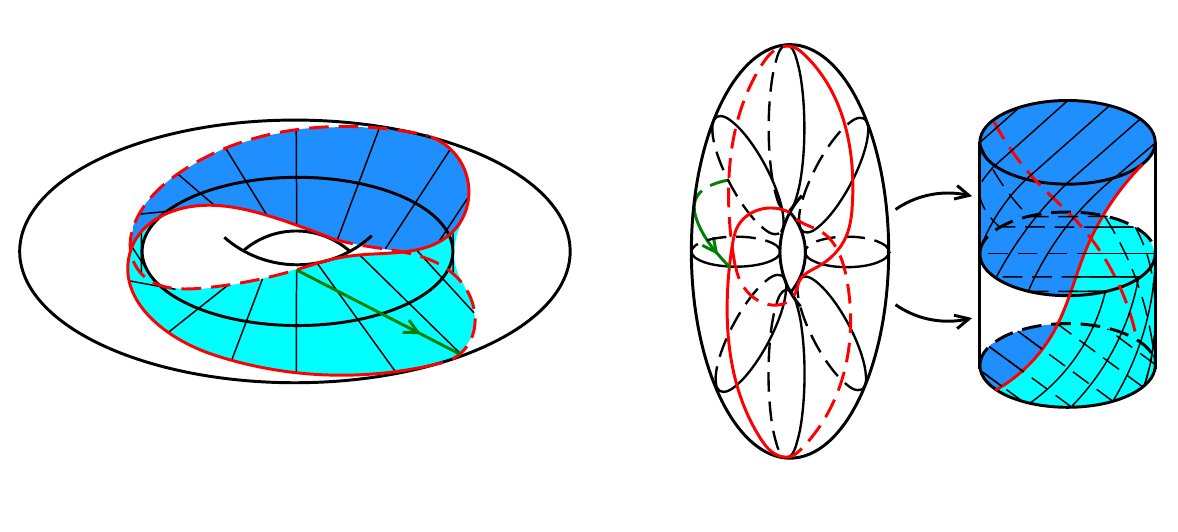}}
  			\put(79.82,28.79){\fontsize{9.96}{11.95}\selectfont $V_1$}
  			\put(122.26,115.57){\rotatebox{360.00}{\fontsize{9.96}{11.95}\selectfont \smash{\makebox[0pt][l]{\textcolor[rgb]{1, 0, 0}{$L$}}}}}
  			\put(220.85,7.81){\fontsize{9.96}{11.95}\selectfont $V_2$}
  			\put(15.65,77.33){\fontsize{9.96}{11.95}\selectfont $V_{1,1}$}
  			\put(139.35,77.33){\fontsize{9.96}{11.95}\selectfont $V_{1,2}$}
  			\put(189.39,103.33){\fontsize{9.96}{11.95}\selectfont $C_1$}
  			\put(254.72,52.08){\fontsize{9.96}{11.95}\selectfont $C_{5}$}
  			\put(302.43,128.28){\fontsize{9.96}{11.95}\selectfont $\Sigma$}
  			\put(254.72,103.67){\fontsize{9.96}{11.95}\selectfont $C_{4}$}
  			\put(189.39,52.03){\fontsize{9.96}{11.95}\selectfont $C_8$}
  			\put(214.80,139.40){\fontsize{9.96}{11.95}\selectfont \textcolor[rgb]{1, 0, 0}{$L$}}
  			\put(90.38,103.20){\rotatebox{360.00}{\fontsize{9.96}{11.95}\selectfont \smash{\makebox[0pt][l]{$S^1$}}}}
  			\put(99.46,69.33){\rotatebox{0.00}{\fontsize{9.96}{11.95}\selectfont \smash{\makebox[0pt][l]{\textcolor[rgb]{0, 0.50196, 0}{$\delta_1$}}}}}
  			\put(190.11,88.02){\rotatebox{0.00}{\fontsize{9.96}{11.95}\selectfont \smash{\makebox[0pt][l]{\textcolor[rgb]{0, 0.50196, 0}{$\delta_2$}}}}}
  		\end{picture}%
  		\caption{\label{fig:-+_(2,2)-link}%
  			A genus $1$ Heegaard surface in $S^3$ $(d=4)$.}
  	\end{figure}
  	
  	(1) Suppose $d$ is odd and consider the automorphism of $S^3$ defined by $\phi(w_1,w_2)=(w_1e^{2\pi i\frac{1}{d}},w_2e^{2\pi i\frac{d+2}{2d}})$. Decompose it as
  	\begin{displaymath}
  	(w_1,w_2)\xrightarrow{\phi_1}(w_1e^{2\pi i\frac{1}{d}},w_2e^{2\pi i\frac{1}{d}})\xrightarrow{\phi_2}(w_1e^{2\pi i\frac{1}{d}},w_2e^{2\pi i\left(\frac{1}{d}+\frac{1}{2}\right)}).
  	\end{displaymath} 
  	On $V_1$, $\phi_1$ preserves each component of $L$ while $\phi_2$ not, thus $\phi$ exchanges them and also the solid tori $V_{1,1},V_{1,2}$. On $V_2$, $\phi$ sends $C_k$ to $C_{k+2+d}$, thus changes the parity of the subscript. Therefore, $\phi$ exchanges $H_1,H_2$ and induces an $f$ on $\Sigma$, which satisfies Theorem \ref{thm:-+} (2) with $n=2d,g=1,h=1,b=1,s=0,l=d$.
  	
  	(2) Suppose $d$ is even and consider the automorphism $\phi$ given by the composition 
  	\begin{displaymath}
  	(w_1,w_2)\rightarrow(w_1e^{2\pi i\frac{1}{2d}},w_2e^{2\pi i\frac{1}{2d}})\rightarrow(w_1e^{2\pi i\frac{1}{2d}},w_2e^{2\pi i\left(\frac{1}{2d}+\frac{1}{2}\right)}).
  	\end{displaymath}
  	Similarly, it induces a periodic map on $\Sigma$ which satisfies Theorem \ref{thm:-+} (3) with $n=2d,g=1,h=2,b=0,s=t=0,p=q=1,l=2d$.
  	$h_1(\phi |_\Sigma),h_2(\phi |_\Sigma)$ must be as in the theorem, though we can compute them directly. Choose a canonical generator system of $\pi_1(\Sigma/\phi)$:
  	\begin{itemize}
  		\item $\delta_1$: represented by the oriented geodesic segment on $\Sigma\cap V_1$ from $(\frac{\sqrt{2}}{2},\frac{\sqrt{2}}{2})$ to $(\frac{\sqrt{2}}{2}e^{2\pi i\frac{1}{2d}},\frac{\sqrt{2}}{2}e^{2\pi i\frac{1+d}{2d}})$;
  		\item $\delta_2$: represented by the oriented curve on $\Sigma\cap C_1$ from $(\frac{\sqrt{2}}{2}e^{2\pi i\frac{d+1}{2d}},\frac{\sqrt{2}}{2}e^{2\pi i\frac{1}{2d}})$ to $(\frac{\sqrt{2}}{2},\frac{\sqrt{2}}{2})$. 
  	\end{itemize}
    By the definition of the epimorphism $\psi$, the deck transformation $\phi^{\psi(\delta_1)}$ sends the initial $(\frac{\sqrt{2}}{2},\frac{\sqrt{2}}{2})$ to the terminal $(\frac{\sqrt{2}}{2}e^{2\pi i\frac{1}{2d}},\frac{\sqrt{2}}{2}e^{2\pi i\frac{1+d}{2d}})$. So $\psi(\delta_1)$ is $1$, and similarly $\psi(\delta_2)=d-1$. Therefore, $h_1(\phi |_\Sigma)=d\in\mathbb{Z}_{2d}$, $h_2(\phi |_\Sigma)=\{\pm 1,\pm(d-1)\}\subset\mathbb{Z}_d$.
	\end{example}
  	
  	\begin{example}\label{ex:-+_4}
  	Replace the $(2,2)-$torus link $L$ in the last example by the $(2,4)-$torus link
  	\begin{displaymath}
  	L'=\{(\frac{\sqrt{2}}{2}e^{i\theta},\pm\frac{\sqrt{2}}{2}e^{2i\theta}):0\leq\theta\leq 2\pi\},
  	\end{displaymath}
  	and define the corresponding handlebodies $V'_{1,1},V'_{1,2},H'_1,H'_2$ similarly. Then we obtain another Heegaard splitting of $S^3$, see Figure \ref{fig:-+_(2,4)-link}. Note that now each of $V'_{1,1},V'_{1,2}$ intersects $C_k$ at two disks, hence the Heegaard surface $\Sigma'=\partial H'_1=\partial H'_2$ has genus $d+1$. 
  	
  	\begin{figure}[htbp]
  		\centering
  		\setlength{\unitlength}{1bp}%
  		\begin{picture}(338.14, 145.82)(0,0)
  			\put(0,0){\includegraphics{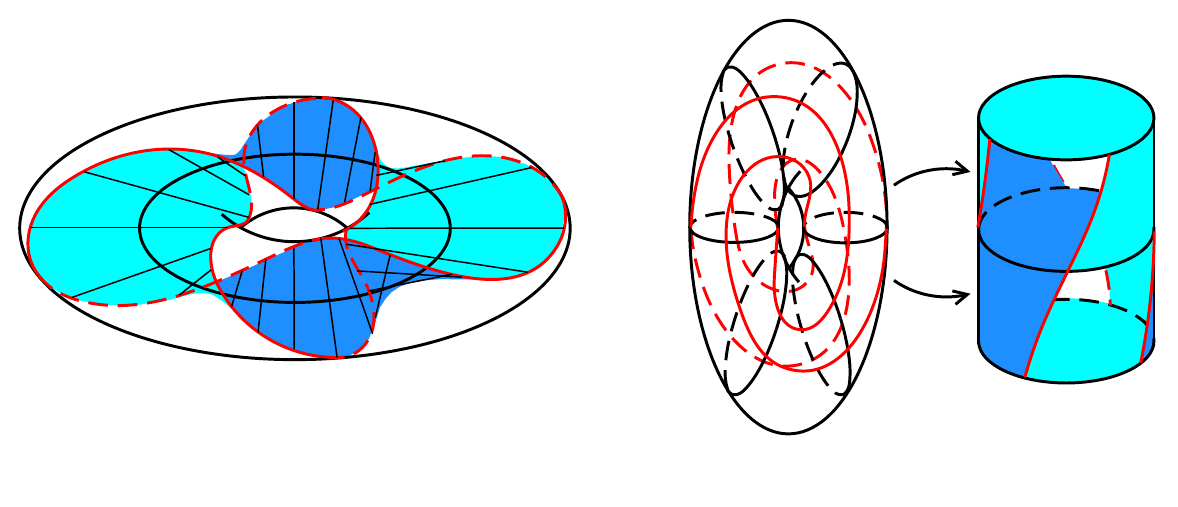}}
  			\put(220.39,7.81){\fontsize{9.96}{11.95}\selectfont $V_2$}
  			\put(188.92,103.33){\fontsize{9.96}{11.95}\selectfont $C_1$}
  			\put(254.25,47.33){\fontsize{9.96}{11.95}\selectfont $C_4$}
  			\put(300.47,127.32){\fontsize{9.96}{11.95}\selectfont $\Sigma'$}
  			\put(254.25,103.67){\fontsize{9.96}{11.95}\selectfont $C_3$}
  			\put(219.18,130.93){\fontsize{9.96}{11.95}\selectfont $C_2$}
  			\put(219.18,29.75){\fontsize{9.96}{11.95}\selectfont $C_5$}
  			\put(188.92,47.39){\fontsize{9.96}{11.95}\selectfont $C_6$}
  			\put(188.59,76.54){\fontsize{9.96}{11.95}\selectfont \textcolor[rgb]{1, 0, 0}{$L'$}}
  			\put(78.54,28.44){\fontsize{9.96}{11.95}\selectfont $V_1$}
  			\put(81.12,121.58){\fontsize{9.96}{11.95}\selectfont \textcolor[rgb]{1, 0, 0}{$L'$}}
  			\put(49.06,51.18){\fontsize{9.10}{10.93}\selectfont $V'_{1,1}$}
  			\put(111.49,51.18){\fontsize{9.10}{10.93}\selectfont $V'_{1,2}$}
  			\put(29.35,82.74){\fontsize{9.96}{11.95}\selectfont $S^1$}
  		\end{picture}%
  		\caption{\label{fig:-+_(2,4)-link}%
  			A genus $d+1$ Heegaard surface in $S^3$ $(d=3)$.}
  	\end{figure}
  		
  	(1) Suppose $d$ is odd and consider the automorphism of $S^3$ defined by the composition 
  	\begin{displaymath}
  	(w_1,w_2)\rightarrow(w_1e^{2\pi i\frac{1}{d}},w_2e^{2\pi i\frac{2}{d}})\rightarrow(w_1e^{2\pi i\frac{1}{d}},w_2e^{2\pi i\left(\frac{2}{d}+\frac{1}{2}\right)}).
  	\end{displaymath}
  	With the Riemann-Hurwitz formula, we see that it induces a periodic map on $\Sigma'$ which satisfies Theorem \ref{thm:-+} (2) with $n=2d,g=d+1,h=2,b=1,s=0,l=d$.
  	
  	(2) Suppose $d$ is odd and consider the composition 
  	\begin{displaymath}
  	(w_1,w_2)\rightarrow(w_1e^{2\pi i\frac{1}{2d}},w_2e^{2\pi i\frac{2}{2d}})\rightarrow(w_1e^{2\pi i\frac{1}{2d}},w_2e^{2\pi i\left(\frac{2}{2d}+\frac{1}{2}\right)}).
  	\end{displaymath}
  	Then the induced map on $\Sigma'$ satisfies Theorem \ref{thm:-+} (3) with $n=2d,g=d+1,h=3,b=0,s=t=0,p=q=1,l=2d$.
	\end{example}

\section{Realizations of the extensions}\label{sect:realizations}

  Suppose a periodic map $f$ on a closed surface satisfies the extension conditions as in one of Theorems \ref{thm:++}, \ref{thm:--}, \ref{thm:+-}, \ref{thm:-+}, we are to prove that it is extendable over $S^3$ in the corresponding type. 
  
  For each case, we will show first that up to conjugacy, the cyclic group $\langle f\rangle$ is determined by the invariants used in the theorems.
  The strategy is to show that fixing a possible collection of such invariants, the followings hold:
  \begin{enumerate}
  	\item The isotropy invariant of $f$ can be normalized. That is to say, there exists some integer $m$ coprime to $n$ (thus $\langle f^m\rangle=\langle f\rangle$) such that the isotropy invariant for $f^m$ has a ``normal" form uniquely determined by the given invariants. The ``normalization''s below are the same. 
  	\item If $h_1(f)$ exists, i.e., $|\Sigma_g/f|$ is non-orientable without boundary, $n/2$ is even and there is no $n/2$ in the isotropy invariant, then $h_1(f)$ can be normalized without changing the isotropy invariant.
  	\item If $h_2(f)$ exists, i.e., $|\Sigma_g/f|$ is non-orientable with genus $h=2$, then $h_2(f)$ can be normalized without changing the isotropy invariant and $h_1(f)$ (if defined).
  \end{enumerate}
  Then we have an integer $m$ coprime to $n$, such that the isotropy invariant of $f^m$ and $h_1(f^m),h_2(f^m)$ (if defined) have a ``normal" form, which is uniquely determined by the given invariants. 
  Moreover, the genus $g$ of the surface can be figured out from the Riemann-Hurwitz formula.
  According to Theorem \ref{thm:classification}, the conjugacy class of $f^m$ is determined. Therefore, $\langle f\rangle=\langle f^m\rangle$ is also determined.
  
  Then it suffices to realize extendable maps with any possible invariants involved. All of them will be established on the examples in Section \ref{sect:examples}. As a consequence of the constructions, Theorem \ref{thm:Heegaard} follows.
  
  The following lemmas will be used to normalize some cases in our discussion.
  
  \begin{lem}\label{lem:power}
  	Suppose $f\in{\rm Aut}(\Sigma_g)$ has period $n$.
  	Let $k,m$ be integers with $km\equiv 1\,({\rm mod}\, n)$.
  	Denote the corresponding epimorphisms for $f$ and $f^m$ by
  	\begin{displaymath}
  	\begin{split}
  	\psi:\pi_1(\Sigma_g/f)\to\langle f\rangle &=\mathbb{Z}_n\\
  	f &\leftrightarrow 1,\\
  	\psi':\pi_1(\Sigma_g/f)\to\langle f^m\rangle &=\mathbb{Z}_n\\
  	f^m &\leftrightarrow 1
  	\end{split}
  	\end{displaymath}
  	respectively. Then for each $\gamma\in\pi_1(\Sigma_g/f)$, we have $\psi'(\gamma)=k\psi(\gamma)$. 
  \end{lem}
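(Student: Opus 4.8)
The plan is simply to unwind the definitions of $\psi$ and $\psi'$; no geometry beyond the covering-space picture of Section~\ref{sect:theorems} is needed. Since $km\equiv 1\,({\rm mod}\,n)$, the integer $m$ is a unit modulo $n$ with inverse $k$, so $\langle f^m\rangle=\langle f\rangle$ as groups of automorphisms of $\Sigma_g$. Hence the quotient orbifold $\Sigma_g/f^m$ coincides with $\Sigma_g/f$, the quotient map $\rho$ is literally the same map, and the short exact sequence
\[
1\to\pi_1(\Sigma_g)\xrightarrow{\rho_*}\pi_1(\Sigma_g/f)\xrightarrow{\psi_0}\langle f\rangle\to 1
\]
is intrinsic to the covering $\rho$: the surjection $\psi_0$ is the monodromy of $\rho$, sending a class $\gamma$ to the deck transformation that carries the initial point of a lift of $\gamma$ to its terminal point, and this depends only on the group $\langle f\rangle$ acting on $\Sigma_g$, not on a chosen generator. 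The homomorphisms $\psi,\psi'$ of the lemma are obtained from this single $\psi_0$ by post-composing with the two identifications $\theta\colon\langle f\rangle\xrightarrow{\sim}\mathbb{Z}_n$, $f\mapsto 1$, and $\theta'\colon\langle f^m\rangle\xrightarrow{\sim}\mathbb{Z}_n$, $f^m\mapsto 1$; that is, $\psi=\theta\circ\psi_0$ and $\psi'=\theta'\circ\psi_0$.

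The remaining step is a one-line arithmetic comparison of $\theta$ and $\theta'$. From $km\equiv 1\,({\rm mod}\,n)$ we have $f=f^{km}=(f^m)^k$ in $\langle f\rangle$, so $\theta'(f)=k\,\theta'(f^m)=k$. Since $\langle f\rangle$ is generated by $f$, a homomorphism out of it is determined by the image of $f$; as $\theta'(f)=k=k\,\theta(f)$, we get $\theta'=k\,\theta$. Therefore, for every $\gamma\in\pi_1(\Sigma_g/f)$,
\[
\psi'(\gamma)=\theta'\bigl(\psi_0(\gamma)\bigr)=k\,\theta\bigl(\psi_0(\gamma)\bigr)=k\,\psi(\gamma),
\]
as claimed.

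There is no real obstacle here; the lemma is a change-of-coordinates statement for the monodromy homomorphism. The one point that deserves an explicit sentence in the write-up is that $\psi_0$ --- equivalently, the deck-transformation labelling underlying the isotropy invariant --- is genuinely unchanged when the generator $f$ of the deck group is replaced by $f^m$. This is immediate once one observes that $\Sigma_g\to\Sigma_g/\langle f\rangle$ is a single fixed regular covering and $\psi_0$ is nothing but its monodromy; after that, the identity $f=(f^m)^k$ together with the definitions of $\psi,\psi'$ recalled in Section~\ref{sect:theorems} does the rest.
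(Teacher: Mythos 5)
Your proof is correct and is essentially the paper's argument, just spelled out: the paper's one-line proof observes that $(f^m)^{\psi'(\gamma)}=f^{\psi(\gamma)}$ (both being the deck transformation attached to $\gamma$ by the monodromy, your $\psi_0$), from which $\psi'(\gamma)=k\psi(\gamma)$ follows by the same arithmetic you carry out on $\theta,\theta'$. Nothing further is needed.
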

  
  \begin{proof}
  	Naturally we have $(f^m)^{\psi'(\gamma)}=f^{\psi(\gamma)}$, so the conclusion follows.
  \end{proof}

  \begin{lem}\label{lem:modulo}
  	Suppose $f\in{\rm Aut}(\Sigma_g)$ has period $n$, and $m$ is a factor of $n$. Let $f_m$ be the order $m$ map induced by $f$ on $|\Sigma_g/f^m|$, and 
  	\begin{displaymath}
  	\begin{split}
  	\psi_m:\pi_1(|\Sigma_g/f^m|/f_m)\to\langle f_m\rangle &=\mathbb{Z}_m\\
  	f_m &\leftrightarrow 1
  	\end{split} 
  	\end{displaymath}
  	be the corresponding epimorphism.
  	Then there is a commutative diagram for $\psi_m,\psi$:
  	\begin{displaymath}
  	\xymatrix{
  		\ar@{}[dr]|{\circlearrowleft}
  		\pi_1(\Sigma_g/f)\ar[r]^-{\psi}\ar[d]_{F_*} & \mathbb{Z}_n\ar[d]^{{\rm mod}\,m} \\
  		\pi_1(|\Sigma_g/f^m|/f_m)\ar[r]^-{\psi_m} & \mathbb{Z}_m
  	}
  	\end{displaymath}
  	where $F_*$ is induced by the natural forgetful map.
  \end{lem}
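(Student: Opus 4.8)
The plan is to unwind the definitions of the two epimorphisms and the forgetful map, and check commutativity on a canonical generator system, which suffices since $\pi_1(\Sigma_g/f)$ is generated by $\mathcal{G}$. First I would set up the relevant maps: the quotient $\rho:\Sigma_g\to\Sigma_g/f$ factors as $\Sigma_g\xrightarrow{\rho'}|\Sigma_g/f^m|\xrightarrow{\rho''}|\Sigma_g/f^m|/f_m$, where $\rho'$ is the quotient by $\langle f^m\rangle$ and $f_m$ is the order $m$ map on $|\Sigma_g/f^m|$ induced by $f$. There is a natural ``forgetful'' orbifold map $\Sigma_g/f\to |\Sigma_g/f^m|/f_m$ (identifying the underlying spaces, forgetting some cone structure), inducing $F_*$ on orbifold fundamental groups; concretely, a loop in $\Sigma_g/f$ based away from the singular set maps to the ``same'' loop downstairs.

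Next I would verify the commutativity on a generator $\gamma\in\mathcal{G}$. By definition of $\psi$, the integer $\psi(\gamma)\in\mathbb{Z}_n$ is characterized by: lifting $\gamma$ to a path $\tilde\gamma$ in $\Sigma_g$ starting at a chosen basepoint $\tilde x$, the endpoint is $f^{\psi(\gamma)}(\tilde x)$. Likewise $\psi_m(F_*\gamma)\in\mathbb{Z}_m$ is characterized by: lifting $F_*\gamma$ to a path in $|\Sigma_g/f^m|$ starting at $\rho'(\tilde x)$, the endpoint is $f_m^{\psi_m(F_*\gamma)}(\rho'(\tilde x))$. But $\rho'\circ\tilde\gamma$ is exactly such a lift of $F_*\gamma$ (since $\rho''\circ\rho'\circ\tilde\gamma$ is the image of $\tilde\gamma$ downstairs, which is $F_*\gamma$ up to the identification of underlying spaces), and its endpoint is $\rho'(f^{\psi(\gamma)}(\tilde x)) = f_m^{\psi(\gamma)}(\rho'(\tilde x))$ by the equivariance $\rho'\circ f = f_m\circ\rho'$. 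Comparing, $f_m^{\psi_m(F_*\gamma)}(\rho'(\tilde x)) = f_m^{\psi(\gamma)}(\rho'(\tilde x))$, and since $f_m$ has order exactly $m$ and acts freely enough near a regular basepoint, this forces $\psi_m(F_*\gamma)\equiv\psi(\gamma)\,({\rm mod}\,m)$, which is precisely the asserted commutativity on $\gamma$. Since both $\psi_m\circ F_*$ and $({\rm mod}\,m)\circ\psi$ are homomorphisms agreeing on the generating set $\mathcal{G}$, they agree on all of $\pi_1(\Sigma_g/f)$.

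The main point requiring care is the identification of $F_*$ and the claim that $\rho'\circ\tilde\gamma$ is a legitimate lift of $F_*\gamma$ in the orbifold sense: one must be slightly careful when $\gamma$ is one of the generators $\xi_k$ surrounding a singular point whose index does not survive to $|\Sigma_g/f^m|/f_m$ (i.e. $n_k\mid m$ fails or the point becomes regular), or one of the $\eta_j,\epsilon_j$ near the boundary, since then $F_*\xi_k$ may be nullhomotopic or of smaller order downstairs. I expect this to be the only real obstacle, and it is handled by choosing the basepoint and the representing loops in the regular part and noting that the covering-space lifting description of $\psi$ and $\psi_m$ is insensitive to these identifications — the endpoint computation above is valid verbatim. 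Everything else is a routine diagram chase.
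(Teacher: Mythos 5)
Your proposal is correct and follows essentially the same route as the paper's proof: both characterize $\psi(\gamma)$ and $\psi_m(F_*\gamma)$ by tracking the endpoint of a lift based at a regular orbit, push the lift down via the intermediate quotient $\Sigma_g\to|\Sigma_g/f^m|$, and use equivariance together with regularity of the basepoint to conclude $\psi_m(F_*\gamma)\equiv\psi(\gamma)\pmod m$. The caveats you raise about singular generators are indeed dispatched exactly as you suggest, by basing everything at a regular orbit.
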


  \begin{proof}
  	Fix a base point for $\pi_1(\Sigma_g/f)$. It is a regular orbit $X\subset\Sigma_g$ under the $f$-action. Fix $x_0\in X$. For $\gamma\in\pi_1(\Sigma_g/f)$, $\gamma$ can be represented by the image of an oriented curve in $\Sigma_g$ that connects $x_0$ to some $x\in X$. Then $\psi(\gamma)\in\mathbb{Z}_n$ is determined by the equation 
  	\begin{displaymath}
  	f^{\psi(\gamma)}(x_0)=x.
  	\end{displaymath}
  	On the other hand, the oriented curve descends to $|\Sigma_g/f^m|$ and connects $[x_0]$ to $[x]$. It represents $F_*\gamma$ in $\pi_1(|\Sigma_g/f^m|/f_m)$. So $\psi_m(F_*\gamma)\in\mathbb{Z}_m$ is determined by the  equation 
  	\begin{displaymath}
  	(f_m)^{\psi_m(F_*\gamma)}([x_0])=[x].
  	\end{displaymath}
  	Obviously, $\psi(\gamma)$ satisfies the equation, so $\psi_m(F_*\gamma)\equiv\psi(\gamma)\,({\rm mod}\,m)$.
  \end{proof}
  
  \begin{lem}\label{lem:number-theory}
  	(1) Suppose ${\rm gcd}(a,b,c)=1$, then there exists an integer $d$ such that ${\rm gcd}(a+bd,c)=1$.
  	
  	(2) If ${\rm gcd}(m,n)=p$, then there exists an integer $k$, such that ${\rm gcd}(k,n)=1$ and $km\equiv p\,({\rm mod}\,n)$. 
  	
  	(3) Suppose $p,q,p_0,q_0$ are integers. The congruence equation system
  	\begin{displaymath}
  	\left\{
  	\begin{array}{c}
  	x \equiv p_0\,({\rm mod}\,p)\\
  	x \equiv q_0\,({\rm mod}\,q)
  	\end{array} 
  	\right.
  	\end{displaymath}
  	has a solution $x\in\mathbb{Z}$ if and only if $p_0 \equiv q_0\,({\rm mod}\,{\rm gcd}(p,q))$. Moreover, if it has a solution, then the solution is unique modulo the least common multiple ${\rm lcm}(p,q)$. 
  	
  	(4) Suppose $\lambda,\mu\in\mathbb{Z}_n$ have orders $p,q$ respectively, and ${\rm gcd}(p,q)=1,n=pql$. Then there exists a generator $\tau$ of $\mathbb{Z}_n$, such that $\lambda=\tau ql,\mu=\tau pl$. 
  \end{lem}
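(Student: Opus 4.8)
The plan is to prove the four parts in the order they are stated, using part~(1) as the engine for parts~(2) and~(4), while part~(3) is just the classical Chinese Remainder Theorem for a pair of possibly non-coprime moduli.

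For part~(1), I would argue one prime at a time over the prime divisors $P$ of $c$. If $P\mid b$, then the hypothesis ${\rm gcd}(a,b,c)=1$ forces $P\nmid a$, so $a+bd\equiv a\not\equiv 0\,({\rm mod}\,P)$ for every $d$ and this prime imposes no constraint. If $P\nmid b$, then $b$ is a unit modulo $P$, the congruence $a+bd\equiv 0\,({\rm mod}\,P)$ picks out a single residue class of $d$, and since $P\geq 2$ there is at least one other class available. Applying the Chinese Remainder Theorem over the finitely many primes of the latter kind, I would choose $d$ lying in a good class modulo each; then no prime divisor of $c$ divides $a+bd$, i.e.\ ${\rm gcd}(a+bd,c)=1$. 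For part~(2), write $m=pm'$ and $n=pn'$ with ${\rm gcd}(m',n')=1$, let $k_0$ be an inverse of $m'$ modulo $n'$ (so ${\rm gcd}(k_0,n')=1$, hence ${\rm gcd}(k_0,n',n)=1$), and apply part~(1) with $a=k_0$, $b=n'$, $c=n$ to obtain $t$ with ${\rm gcd}(k_0+n't,n)=1$. Then $k=k_0+n't$ satisfies $km'\equiv 1\,({\rm mod}\,n')$, hence $km\equiv p\,({\rm mod}\,n)$, and ${\rm gcd}(k,n)=1$ by construction.

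For part~(3), I would seek a solution of the form $x=p_0+py$; the remaining requirement $py\equiv q_0-p_0\,({\rm mod}\,q)$ is a one-variable linear congruence, solvable precisely when ${\rm gcd}(p,q)\mid q_0-p_0$, that is, when $p_0\equiv q_0\,({\rm mod}\,{\rm gcd}(p,q))$; the converse is immediate by reducing any common solution modulo ${\rm gcd}(p,q)$, and two solutions differ by a common multiple of $p$ and $q$, hence by a multiple of ${\rm lcm}(p,q)$. For part~(4), since $\mathbb{Z}_n$ is cyclic of order $n=pql$, its unique subgroup of order $p$ is $\langle ql\rangle$, and $\lambda$, having order exactly $p$, generates it; thus $\lambda=ql\,a$ with ${\rm gcd}(a,p)=1$, and likewise $\mu=pl\,b$ with ${\rm gcd}(b,q)=1$. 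In $\mathbb{Z}_n$, the equality $ql\,\tau=ql\,a$ is equivalent to $\tau\equiv a\,({\rm mod}\,p)$ and $pl\,\tau=pl\,b$ to $\tau\equiv b\,({\rm mod}\,q)$; since ${\rm gcd}(p,q)=1$, part~(3) produces $\tau_0$ with $\tau_0\equiv a\,({\rm mod}\,p)$ and $\tau_0\equiv b\,({\rm mod}\,q)$, and then ${\rm gcd}(\tau_0,pq)=1$. Applying part~(1) with $a=\tau_0$, $b=pq$, $c=n$ gives $t$ with ${\rm gcd}(\tau_0+pq\,t,n)=1$; the element $\tau=\tau_0+pq\,t$ still satisfies both congruences (they depend only on $\tau$ modulo $pq$), so $\tau$ is a generator of $\mathbb{Z}_n$ with $\lambda=ql\,\tau$ and $\mu=pl\,\tau$.

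The only mildly delicate point is part~(1): one must see that the hypothesis ${\rm gcd}(a,b,c)=1$ is exactly what neutralizes the primes dividing $b$, so that only the prime divisors of $c$ that are coprime to $b$ actually constrain $d$. After that, everything reduces to routine applications of the Chinese Remainder Theorem, and I do not anticipate any real obstacle.
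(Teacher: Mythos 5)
Your proof is correct and follows essentially the same route as the paper: parts (2) and (4) are reduced to part (1) in exactly the same way, and part (3) is the same two-modulus Chinese Remainder argument. The only cosmetic difference is in part (1), where the paper exhibits an explicit $d$ (the product of the prime-power factors of $c$ coprime to $a$) while you select $d$ by avoiding one residue class modulo each relevant prime; both are valid.
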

  
  \begin{proof}
  	(1) Factorize $c$ into $p_1^{s_1}\cdots p_I^{s_I}q_1^{t_1}\cdots q_J^{t_J}$, where $p_i,q_j$ are prime and $p_i|a,q_j\nmid a$ for each $i,j$. Let $d=q_1^{t_1}\cdots q_J^{t_J}$, and suppose ${\rm gcd}(a+bd,c)=c_0$. We only need to show $c_0=1$. 
  	For each $p_i$, $p_i|a,p_i|c,p_i\nmid b,p_i\nmid d$; for each $q_j$, $q_j\nmid a,q_j|d,q_j|c$. So none of $p_i,q_j$ divides $a+bd$, thus $c_0$ must equal $1$.
  	
  	(2) Assume $m=m_0p,n=n_0p$, then ${\rm gcd}(m_0,n_0)=1$. Let $k_1$ satisfy  $k_1m_0\equiv 1\,({\rm mod}\,n_0)$. By (1), we just choose $k=k_1+n_0d(d\in\mathbb{Z})$ such that ${\rm gcd}(k,n)=1$.
  	
  	(3) If $x$ exists, assume $x=p_0+k_1p=q_0+k_2q$, $k_1,k_2\in\mathbb{Z}$. Then $p_0-q_0=-k_1p+k_2q$, so $p_0 \equiv q_0\,({\rm mod}\,{\rm gcd}(p,q))$.
  	
  	Conversely, if $p_0 \equiv q_0\,({\rm mod}\,{\rm gcd}(p,q))$, there are integers $k,a,b$ such that $p_0-q_0=k\cdot{\rm gcd}(p,q)=k(ap+bq)$. Then $x=p_0-kap=q_0+kbq$ is a solution of the equations.
  	
  	If $x,x'$ both satisfy the equations, then $x \equiv x'\,({\rm mod}\,p)$, $x \equiv x'\,({\rm mod}\,q)$, so $x \equiv x'\,({\rm mod}\,{\rm lcm}(p,q))$.
  	
  	(4) Assume $\lambda=\lambda_0ql,\mu=\mu_0pl$ with $1\leq \lambda_0<p,1\leq \mu_0<q$ and ${\rm gcd}(\lambda_0,p)={\rm gcd}(\mu_0,q)=1$. By (3), there exists an integer $\tau_0$ such that $\tau_0\equiv\lambda_0\,({\rm mod}\,p)$ and $\tau_0\equiv\mu_0\,({\rm mod}\,q)$. Moreover, ${\rm gcd}(\tau_0,pq)=1$. By (1), we just choose $\tau=\tau_0+pqd(d\in\mathbb{Z})$ such that ${\rm gcd}(\tau,n)=1$. 
  \end{proof}

\subsection{Type $(+,+)$}\label{subsect:realization++}
  \begin{prop}\label{prop:conjugacy++}
  	Suppose $f$ satisfies the conditions in Theorem \ref{thm:++}, then the conjugacy class of $\langle f\rangle$ is uniquely determined by $n,h,s,t,p,q$. 
  \end{prop}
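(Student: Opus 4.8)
The plan is to reduce everything to the classification theorem (Theorem \ref{thm:classification} (1)) via the strategy outlined at the start of Section \ref{sect:realizations}: since $|\Sigma_g/f|$ is orientable in the type $(+,+)$ setting, it suffices to show that after replacing $f$ by a suitable power $f^m$ with $\gcd(m,n)=1$, the isotropy invariant is forced into a canonical ``normal form'' depending only on the listed parameters. Concretely, I would start from the hypothesis that the isotropy invariant of $f$ is
\[
\pm(\underbrace{\alpha,\dots,\alpha}_{t},\underbrace{-\alpha,\dots,-\alpha}_{t},\underbrace{\beta,\dots,\beta}_{s/2-t},\underbrace{-\beta,\dots,-\beta}_{s/2-t})
\]
up to rearrangement, where $\alpha,\beta$ have coprime orders $p,q$ in $\mathbb{Z}_n$. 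The first observation is that the numbers $p,q,s,t$ are genuinely invariants of $f$: $\{p,q\}$ is read off as the multiset of orders occurring among the $\psi(\xi_k)$, the multiplicity data $t$ and $s/2-t$ are the counts (which is why the pairing into $\pm$ forces $s$ even), and $h$ together with $n$ is part of the conjugacy data in Theorem \ref{thm:classification}. So the only freedom is in the actual values $\alpha,\beta \in \mathbb{Z}_n$, and I must show these can be normalized.

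The key step is the normalization of $\alpha$ and $\beta$ simultaneously. Here I would invoke Lemma \ref{lem:number-theory}(4): writing $n = pql$ where $p = {\rm ord}(\alpha)$ and $q = {\rm ord}(\beta)$ (note $\gcd(p,q)=1$), there is a generator $\tau$ of $\mathbb{Z}_n$ with $\alpha = \tau q l$ and $\beta = \tau p l$. Then choosing $m$ with $m\tau \equiv 1 \pmod n$ — so $\gcd(m,n)=1$ and $\langle f^m\rangle = \langle f\rangle$ — Lemma \ref{lem:power} gives that the epimorphism $\psi'$ for $f^m$ satisfies $\psi'(\gamma) = m\,\psi(\gamma)$ for all $\gamma$, hence the isotropy invariant of $f^m$ has $\alpha$ replaced by $m\alpha = m\tau q l = ql$ and $\beta$ replaced by $m\beta = pl$. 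Thus $f^m$ has the completely explicit isotropy invariant with entries $\pm ql$ (with multiplicity $t$ each) and $\pm pl$ (with multiplicity $s/2-t$ each), which is determined solely by $n,s,t,p,q$. Since $|\Sigma_g/f^m| = |\Sigma_g/f|$ is orientable of genus $h$, and there are no boundary entries ($b=0$ in this type), Theorem \ref{thm:classification}(1) says the conjugacy class of $f^m$ — hence of $\langle f^m\rangle = \langle f\rangle$ — is pinned down; and by the Riemann--Hurwitz formula the genus $g$ is itself a function of $n,h,s,t,p,q$, so no extra data hides there.

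The main obstacle I anticipate is bookkeeping rather than conceptual: I must be careful that the $\pm$ in the isotropy invariant (the global sign ambiguity coming from reversing the orientation of $\Sigma_g/f$, equivalently replacing $\psi$ by $-\psi$) does not create a second genuinely distinct normal form, and that the ``up to rearrangement'' clause is compatible with the grouping into the four blocks. I would handle this by noting that $-1$ times the normalized invariant is again of the same shape with $\tau$ replaced by $-\tau$ (still a generator), so it lies in the same $\pm$-class, and that the block structure is intrinsic because it is the decomposition of the multiset of $\psi(\xi_k)$ according to order in $\mathbb{Z}_n$ and then into $\pm$-pairs. One should also check the degenerate extremes $t=0$ (set $p=1$, so the $\alpha$-blocks are empty) and $t = s/2$ (set $q=1$), which Theorem \ref{thm:++} implicitly allows; in those cases Lemma \ref{lem:number-theory}(4) still applies with the trivial order, and the argument goes through verbatim.
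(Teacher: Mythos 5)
Your proposal is correct and takes essentially the same route as the paper: invoke Lemma \ref{lem:number-theory} (4) to write $\alpha=\tau ql$, $\beta=\tau pl$ for a generator $\tau$ of $\mathbb{Z}_n$, pass to a power of $f$ generating the same cyclic group so that the isotropy invariant becomes the normal form $(\pm ql,\dots;\pm pl,\dots)$ via Lemma \ref{lem:power}, recover $g$ from Riemann--Hurwitz, and conclude with Theorem \ref{thm:classification} (1). One small correction: Lemma \ref{lem:power} says the epimorphism for $f^m$ is $k\psi$ with $km\equiv 1\ ({\rm mod}\ n)$, so to send $\tau ql\mapsto ql$ you should pass to $f^{\tau}$ (as the paper does) rather than to $f^{m}$ with $m\tau\equiv 1$ --- with your choice the new value would be $\tau^{2}ql$, not $ql$.
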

  
  \begin{proof}
  	We fix $n,h,s,t,p,q$ and follow the strategy at the beginning of this section. 
  	
  	Assume $n=pql$ for some positive integer $l$. By Lemma \ref{lem:number-theory} (4), the isotropy invariant is 
  	\begin{displaymath}
  	(\underbrace{\tau ql,\cdots,\tau ql}_{t},\underbrace{-\tau ql,\cdots,-\tau ql}_{t},\underbrace{\tau pl,\cdots,\tau pl}_{s/2-t},\underbrace{-\tau pl,\cdots,-\tau pl}_{s/2-t}),
  	\end{displaymath}
  	where $\tau$ is a generator of $\mathbb{Z}_n$.
  	By Lemma \ref{lem:power}, the isotropy invariant for $f^\tau$ is
  	\begin{displaymath}
  	(ql,\cdots,ql,-ql,\cdots,-ql,pl,\cdots,pl,-pl,\cdots,-pl).
  	\end{displaymath}
  	Moreover, $g$ can be figured out from the Riemann-Hurwitz formula
  	\begin{displaymath}
  	2-2g=n\left(2-2h-2t(1-\frac{1}{p})-(s-2t)(1-\frac{1}{q})\right).
  	\end{displaymath}
  	According to Theorem \ref{thm:classification} (1), $f^\tau$ is determined up to conjugacy and thus so is $\langle f\rangle=\langle f^\tau\rangle$.
  \end{proof}

  Maps extendable in type $(+,+)$ and the corresponding embedded surfaces have been described in \cite{NWW}. We introduce another approach to constructing them as follows, and the strategy will be applied later to obtain all extendable maps in other three types from basic examples in Section \ref{sect:examples}.
  
  \begin{example}\label{ex:++general}
  	Let $\phi$ be ${\rm id}_{S^3}$ and $\Sigma$ be a trivially embedded sphere in $S^3$ which does not intersect the two circles $S^1,Z$, see Figure \ref{fig:modification}. 
  	Then of course $\phi|_\Sigma$ satisfies the conditions in Theorem \ref{thm:++} with parameters $g=h=0,s=t=0,n=p=q=1$. 
  	We can modify $\Sigma$ as in the figure to get another $\phi$-invariant surface $\Sigma'$, which has larger genus and more intersection points with $S^1, Z$. 
  	Then we consider a branched covering $S^3\to S^3$ with branch sets $S^1\cup Z$ (both upstairs and downstairs). Lifting $\Sigma'$ from the downstairs $S^3$ to the upstairs $S^3$, we obtain a surface $\tilde{\Sigma}$. Let $\tilde{\phi}$ be a periodic automorphism of $S^3$ such that the quotient map $S^3\to |S^3/\tilde{\phi}|\cong S^3$ is exactly the branched covering.
  	By this means, the extension for a map in Theorem \ref{thm:++} with any possible parameters $\tilde{n}=\tilde{p}\tilde{q},\tilde{h},\tilde{s},\tilde{t},\tilde{p},\tilde{q}$ can be realized as $(\tilde{\phi}|_{\tilde{\Sigma}},\tilde{\phi})$.
  	
  	\begin{figure}[htbp]
  		\centering
  		\setlength{\unitlength}{1bp}%
  		\begin{picture}(353.76, 152.96)(0,0)
  			\put(0,0){\includegraphics{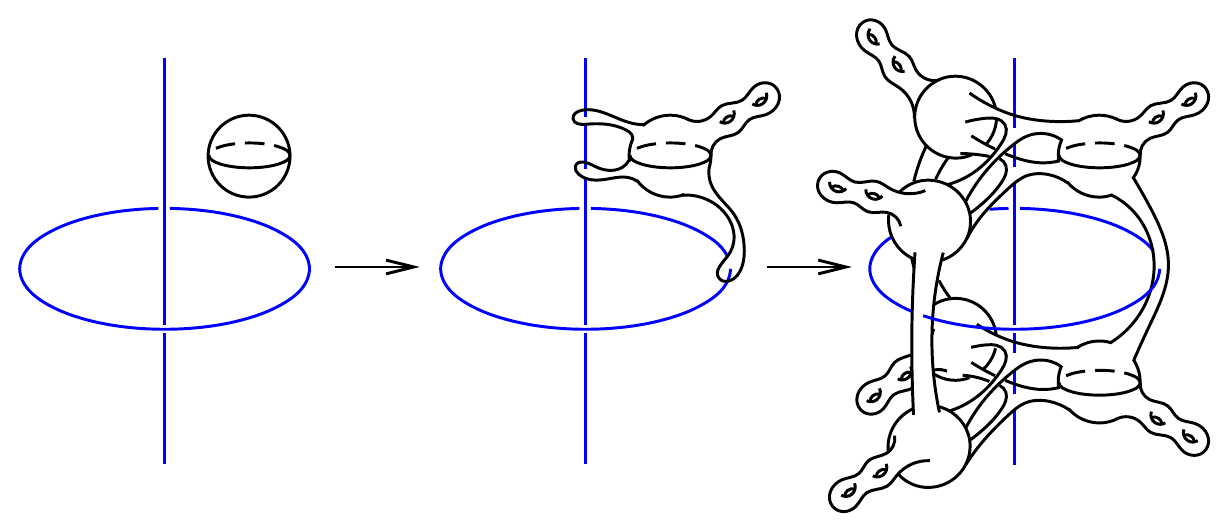}}
  			\put(37.85,127.75){\fontsize{9.96}{11.95}\selectfont \textcolor[rgb]{0, 0, 1}{$Z$}}
  			\put(18.57,92.37){\fontsize{9.96}{11.95}\selectfont \textcolor[rgb]{0, 0, 1}{$S^1$}}
  			\put(67.77,122.18){\fontsize{9.96}{11.95}\selectfont $\Sigma$}
  			\put(188.67,122.22){\fontsize{9.96}{11.95}\selectfont $\Sigma'$}
  			\put(311.57,122.32){\fontsize{9.96}{11.95}\selectfont $\tilde\Sigma$}
  		\end{picture}%
  		\caption{\label{fig:modification}%
  			Constructing new extendable maps with surgeries  ($\tilde{n}=6,\tilde{h}=2,\tilde{s}=6,\tilde{t}=1,\tilde{p}=2,\tilde{q}=3$).}
  	\end{figure}
  \end{example}
  
  Generally, suppose we have a periodic automorphism $\phi$ of $S^3$ and a $\phi$-invariant Heegaard surface $\Sigma$, such that the induced map $\phi|_\Sigma$ satisfies the conditions in Theorem \ref{thm:++} with parameters $g,h,s,t,n,p,q$.
  We can make modifications in a fundamental domain for the $\phi$-action on $(S^3,\Sigma)$, and extend them $\phi$-equivariantly. 
  In other words, we apply the following surgeries, and then lift the surface in $|S^3/\phi|$ to $\Sigma'$ in $S^3$:
  \begin{itemize}
  	\item \textbf{genus surgery}: modifying the quotient surface $|\Sigma/\phi|$ in $|S^3/\phi|$ to increase the genus by adding a ``handle''. 
  	\item \textbf{singular surgery}: modifying the quotient surface $|\Sigma/\phi|$ in $|S^3/\phi|$ to enlarge the intersection sets $|\Sigma/\phi|\cap|S^1/\phi|, |\Sigma/\phi|\cap|Z/\phi|$ by an isotopy which seems like ``stretching out hands to grip the axes''. 
  \end{itemize}
  After that, take a branched covering $S^3\to S^3$ with branch sets $S^1\cup Z$ (both upstairs and downstairs). Lifting $\Sigma'$ and $\phi$ from the downstairs $S^3$ to the upstairs $S^3$, we obtain a surface $\tilde{\Sigma}$ and a periodic automorphism $\tilde{\phi}$. In other words, we conduct the following surgery:
  \begin{itemize}
  	\item \textbf{branch surgery}: suppose $\phi$ acts on $S^3\subset\mathbb{C}^2$ as
  	\begin{displaymath}
  	\phi(w_1,w_2)=(w_1e^{2\pi i\frac{m_1}{n}},w_2e^{2\pi i\frac{m_2}{n}})
  	\end{displaymath}
  	with ${\rm gcd}(n,m_1,m_2)=1$. 
  	Let $\tilde{\phi}:S^3\to S^3$ be the map defined by
  	\begin{displaymath}
  	\tilde{\phi}(w_1,w_2)\mapsto(w_1e^{2\pi i\frac{m_1}{nn_1}},w_2e^{2\pi i\frac{m_2}{nn_2}})
  	\end{displaymath}
  	with ${\rm gcd}(m_1,n_1)={\rm gcd}(m_2,n_2)={\rm gcd}(n_1,n_2)=1$. 
  	Then the $\tilde{\phi}^n$-action induces a branched covering map
  	\begin{displaymath}
  	\begin{matrix}
  	\omega: & S^3 &\to &|S^3/(\tilde{\phi})^n|\cong S^3\\
  	& (w_1,w_2) &\mapsto &(|w_1|e^{i\cdot n_1\cdot {\rm Arg}(w_1)},|w_2|e^{i\cdot n_2\cdot {\rm Arg}(w_2)}).
  	\end{matrix}
  	\end{displaymath}
  	For a $\phi$-invariant surface $\Sigma'\subset |S^3/(\tilde{\phi})^n|\cong S^3$, denote $\tilde{\Sigma}=\omega^{-1}(\Sigma')$.
  \end{itemize}
  As $|\tilde{\Sigma}/\tilde{\phi}|\cong|\Sigma'/\phi|$, we have constructed a new extendable map $\tilde{\phi}|_{\tilde{\Sigma}}$ which still satisfyies the conditions in Theorem \ref{thm:++}. Denote the corresponding parameters by $\tilde{g},\tilde{n},\tilde{h},\tilde{s},\tilde{t},\tilde{p},\tilde{q}$. We see that $\tilde{h}-h$ is the genus of the added ``handle'' in the genus surgery; $2\tilde{t},\tilde{s}-2\tilde{t}$ are the cardinalities of $|\Sigma'/\phi|\cap|S^1/\phi|, |\Sigma'/\phi|\cap|Z/\phi|$ respectively; $\tilde{p}=pn_2,\tilde{q}=qn_1$; $\tilde{n}=nn_1n_2$; and $\tilde{g}$ is determined by the Riemann-Hurwitz formula. 
  Moreover, $\tilde{\Sigma}$ can be chosen as a Heegaard surface in $S^3$.
  
  Now from Example \ref{ex:++} we have an extendable map with parameters $n>1,h=1,s=t=0,p=q=1$. Apply the three surgeries. Similarly we can obtain each extendable map which has possible parameters $\tilde{g},\tilde{n},\tilde{h},\tilde{s},\tilde{t},\tilde{p},\tilde{q}$ with $\tilde{n}>\tilde{p}\tilde{q}$. Note that in this case $\tilde{h}$ must be non-zero for otherwise the corresponding epimorphism $\tilde{\psi}$ can not be surjective. 
  Also, during the surgeries we can always make $\tilde{\Sigma}$ a Heegaard surface of $S^3$.
  
  So far we have proved that for a surface map satisfying the conditions in Theorem \ref{thm:++}, it is extendable in type $(+,+)$. Moreover, according to its parameters, we can construct such a map together with an extension as above. For a direct (and essentially the same) description, readers may refer to \cite{NWW}.
  
\subsection{Type $(-,-)$}\label{subsect:realization--}
  \begin{prop}\label{prop:conjugacy--}
  	(1) Suppose $f$ satisfies Theorem \ref{thm:--} (1), then the conjugacy class of $\langle f\rangle$ is uniquely determined by $n,h,b,s,t$. 
  	
  	(2) Suppose $f$ satisfies Theorem \ref{thm:--} (2) or (3), then the conjugacy class of $\langle f\rangle$ is uniquely determined by $n,h,s$.
  \end{prop}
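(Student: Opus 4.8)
The plan is to follow the strategy stated at the beginning of this section: replace $f$ by a suitable power so that the isotropy invariant — together with $h_1(f)$ and $h_2(f)$ wherever these are defined — takes a normal form depending only on the listed parameters, recover the genus $g$ from the Riemann--Hurwitz formula, and then apply the Classification Theorem~\ref{thm:classification}. Since the power taken generates the same cyclic group as $f$, this determines the conjugacy class of $\langle f\rangle$.

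The first step is to normalize the isotropy invariant. In all three cases of Theorem~\ref{thm:--} its nonzero entries equal $\pm 2\alpha$ for a fixed generator $\alpha$ of $\mathbb{Z}_n$, so by Lemma~\ref{lem:power} the epimorphism of $f^{\alpha}$ is $\alpha^{-1}\psi$ and these entries become $\pm 2$ while the zero entries stay $0$. In case~(1) the quotient is orientable, $h_1$ and $h_2$ are undefined, and the normalized isotropy invariant is precisely the list in Theorem~\ref{thm:--}(1) with $\alpha$ replaced by $1$; up to the allowed reordering and overall sign this list is determined by $n,h,b,s,t$, and $g$ is then fixed by Riemann--Hurwitz, so Theorem~\ref{thm:classification}(1) finishes case~(1). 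In cases~(2) and (3) the normalized isotropy invariant is just $s$ copies of $\pm 2$, determined by $n$ and $s$.

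It then remains, in cases~(2) and (3), to pin down $h_1$ and $h_2$. Here the $\psi(\delta_i)$, being the $\psi$-images of orientation-reversing loops in the non-orientable base, are odd. For $h_1$ (defined only in case~(3), and only for $n>4$; when $n=4$ one has $s=0$ and the hypothesis already gives $h_1(f)=0$): feeding the orbifold relation $\prod_i\delta_i^2\prod_k\xi_k=1$, i.e.\ $2\sum_i\psi(\delta_i)+\sum_k\psi(\xi_k)\equiv 0\pmod n$, and the hypothesis $h_1(f)=\mp s\alpha$ into the definition of $h_1$, a direct computation with $\alpha^{-1}\psi$ yields $h_1(f^{\alpha})=-s$. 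For $h_2$ (defined only when $h=2$): if $s>0$ then the integer $m'$ of its definition, applied to $f^{\alpha}$, equals $2$, so $h_2(f^{\alpha})=\{1,1\}\subset\mathbb{Z}_2$, a constant; if $s=0$ the relation forces $\psi(\delta_1)+\psi(\delta_2)\equiv 0\pmod{n/2}$, and combining this with parity (when $n/2$ is odd) or with $h_1(f)=0$ (when $n/2$ is even) gives $\psi(\delta_1)+\psi(\delta_2)=0$, whence surjectivity of $\psi$ makes $\psi(\delta_1)$ a generator of $\mathbb{Z}_n$; the additional normalization $f\mapsto f^{\psi(\delta_1)}$ then gives $h_2=\{\pm 1\}$ without affecting the (empty) isotropy invariant or the value $h_1=0$. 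In every subcase $g$ is determined by Riemann--Hurwitz, so Theorem~\ref{thm:classification}(2) shows $\langle f\rangle$ is determined up to conjugacy by $n,h,s$.

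The part I expect to take the most care is the case $h=2$: one must check that the hypotheses of Theorem~\ref{thm:--} genuinely pin down $h_1$ and $h_2$ after the isotropy invariant has been normalized, carry out the bookkeeping of the coefficients $\chi_k$ appearing in $h_1$, and treat the subcase $s=0$ separately, where there is no isotropy invariant to normalize against and one must instead lean on surjectivity of $\psi$ together with the orbifold relation and verify that the power chosen for $h_2$ keeps $h_1$ at its normal value.
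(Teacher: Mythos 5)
Your proposal is correct and follows essentially the same route as the paper: normalize the isotropy invariant by passing to $f^{\alpha}$, observe that $h_1$ is either undefined or pinned down by the hypotheses of Theorem \ref{thm:--}, treat $h_2$ for $h=2$ by splitting into $s>0$ (where $m=2$ forces $h_2=\{\pm1,\pm1\}\subset\mathbb{Z}_2$) and $s=0$ (where the relation $\delta_1^2\delta_2^2=1$ plus parity or $h_1=0$ gives $\psi(\delta_1)+\psi(\delta_2)=0$, surjectivity makes $\psi(\delta_1)$ a unit, and a further power normalizes $h_2$ without disturbing $h_1$), and then invoke Theorem \ref{thm:classification}. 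The only difference is that you make explicit the computation $h_1(f^{\alpha})=-s$, which the paper leaves implicit; this is a harmless (and correct) addition.
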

  
  \begin{proof}
  	(1) Fix $n,h,b,s,t$. Just as in the proof of Proposition \ref{prop:conjugacy++}, we may figure out $g$ from the Riemann-Hurwitz formula, and normalize the isotropy invariant to make $\alpha=1\in\mathbb{Z}_n$.
  	Then by Theorem \ref{thm:classification} (1), $\langle f\rangle$ is determined up to conjugacy. 
  	
  	(2) Fix $n,h,s$. Similarly we figure out $g$ and normalize $\alpha$ to be $1\in\mathbb{Z}_n$.
  	In the following cases, $h_1(f)$ does not exist:
  	\begin{itemize}
  		\item $n/2$ is odd;
  		\item $n=4,s\neq 0$, thus we have $2\alpha=2=n/2$ in the isotropy invariant.
  	\end{itemize}
    And in the other cases, $h_1(f)$ is fixed by the condition (3) in Theorem \ref{thm:--}. So it suffices to normalize $h_2(f)$ for the case $h=2$. 
  	If $s>0$, $h_2(f)$ must be $\{\pm 1,\pm 1\}\subset\mathbb{Z}_2$ as ${\rm gcd}(2\alpha,n)=2$, then $\langle f\rangle$ is determined. Assume $h=2,s=0$ below, thus by definition and Lemma \ref{lem:modulo}, we have
  	\begin{displaymath}
  	h_2(f)=\{\pm\psi(\delta_1),\pm\psi(\delta_2)\}\subset\mathbb{Z}_d,\,d={\rm gcd}(\psi(\delta_1)+\psi(\delta_2),n).
  	\end{displaymath}
  	
  	If $n/2$ is odd, from the relation
  	\begin{displaymath}
  	2(\psi(\delta_1)+\psi(\delta_2))=\psi(\delta_1^2\delta_2^2)=\psi(1)=0\in\mathbb{Z}_n
  	\end{displaymath}
  	we see that $\psi(\delta_1)+\psi(\delta_2)=0\in\mathbb{Z}_n$ for $\psi(\delta_1),\psi(\delta_2)$ are both odd. 
  	If $n/2$ is even, with the assumptions in the theorem we also have $\psi(\delta_1)+\psi(\delta_2)=h_1(f)=0$. So $d=n$ holds for either case.
  	$\psi$ is surjective, so $\psi(\delta_1),\psi(\delta_2)$ are both coprime with $n$. Choose $m=\psi(\delta_1)$, then $f^m$ normalizes the invariant $h_2$. In fact, let $\psi'$ be the corresponding epimorphism for $f^m$, then for each $\gamma\in\pi_1(\Sigma_g/f)$, by Lemma \ref{lem:power} we have $\psi'(\delta_1)=1, \psi'(\delta_2)=-1$. Therefore,
  	\begin{displaymath}
  	h_2(f^m)=\{\pm\psi'(\delta_1),\pm\psi'(\delta_2)\}=\{\pm 1,\pm(n-1)\}\subset\mathbb{Z}_n.
  	\end{displaymath}
  	Moreover, $f,f^m$ have no isotropy invariant, and $h_1(f^m)$, if exists, equals $\psi'(\delta_1)+\psi'(\delta_2)=0=h_1(f)$.
  	As a consequence, the conjugacy class of $\langle f\rangle=\langle f^m\rangle$ is uniquely determined.
  \end{proof}
  
  For type $(-,-)$, most extendable maps have been constructed in \cite{C2}. Also, it is convenient to realize them from Example \ref{ex:--} with surgeries. 
  
  \begin{example}\label{ex:--general}
  Suppose $f$ satisfies the condition (1) in Theorem \ref{thm:--} with parameters $n,h=2,b=6,s=5,t=4$. Then it can be realized from Example \ref{ex:--} (1)(ii). In fact, the constructed surface $\Sigma_g$ can also be directly described as follows (just with a little deformation). 
  
  \begin{figure}[htbp]
  	\centering
  	\setlength{\unitlength}{1bp}%
  	\begin{picture}(181.36, 127.68)(0,0)
  		\put(0,0){\includegraphics{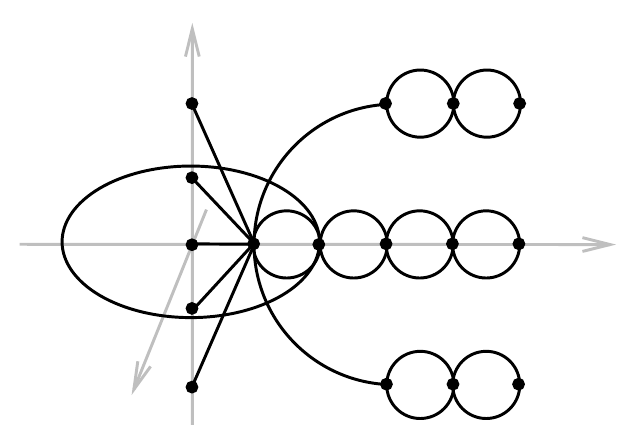}}
  		\put(47.03,72.15){\fontsize{9.96}{11.95}\selectfont $1$}
  		\put(46.95,95.22){\fontsize{9.96}{11.95}\selectfont $2$}
  		\put(58.78,114.23){\fontsize{9.96}{11.95}\selectfont $z$}
  		\put(32.16,19.37){\fontsize{9.96}{11.95}\selectfont $x$}
  		\put(162.43,50.24){\fontsize{9.96}{11.95}\selectfont $y$}
  		\put(8.24,68.07){\fontsize{9.96}{11.95}\selectfont $C_2$}
  		\put(141.05,69.18){\fontsize{9.96}{11.95}\selectfont $C'_4$}
  		\put(141.05,109.91){\fontsize{9.96}{11.95}\selectfont $C''_4$}
  		\put(89.15,83.57){\fontsize{9.96}{11.95}\selectfont $\alpha$}
  		\put(61.81,88.22){\fontsize{9.96}{11.95}\selectfont $\beta_2$}
  	\end{picture}%
  	\caption{\label{fig:graph}%
  		The graph $\Gamma$ ($I=2,J=1,K=3,L=2$).}
  \end{figure}
  
  See Figure \ref{fig:graph}. Denote
  \begin{align*}
  C_j &=\{(x,y,z)\in\mathbb{R}^3:z=0,x^2+y^2=j^2\},\\
  C'_k &=\{(x,y,z)\in\mathbb{R}^3:x=0,(y-(k+\frac{1}{2}))^2+z^2=\frac{1}{4}\},\\
  C''_l &=\{(x,y,z)\in\mathbb{R}^3:x=0,(y-(l+\frac{1}{2}))^2+(z-2)^2=\frac{1}{4}\},\\
  \alpha &=\{(x,y,z)\in\mathbb{R}^3:x=0,y\leq 3,z\geq 0,(y-3)^2+z^2=4\},
  \end{align*} 
  and let $\beta_i$ be the line segment connecting the points $(0,0,i),(0,1,0)$. 
  Choose $\phi\in{\rm Aut}(S^3)$ as in Example \ref{ex:--} (1)(ii), i.e., the composition of a
  $\frac{2\pi}{n/2}$-rotation around the $z$-axis and a reflection across
  the $xy$-plane.
  For non-negative integers $I,J,K,L$, choose $\Sigma$ to be the boundary of a $\phi$-invariant regular neighborhood of the graph
  \begin{displaymath}
  \Gamma_{I,J,K,L}\triangleq\bigcup_{m=1}^{n}\phi^m\left(\alpha\cup(\bigcup_{i=0}^{I}\beta_i)\cup(\bigcup_{j=1}^{J}C_{j+1})\cup(\bigcup_{k=1}^{J+K}C'_k)\cup(\bigcup_{l=1}^{L}C''_{l+2})\right).
  \end{displaymath} 
  $\Sigma$ is a connected surface of genus
  \begin{displaymath}
   g=\frac{n}{2}(2I+2J+K+2L)-2I.
  \end{displaymath}
  Then $\phi|_\Sigma$ satisfies the conditions (1) in Theorem \ref{thm:--} with
  $h=L,\,b=2J+K+1,\,s=2I+1,\,t=I+J+1$. Particularly, the case $I=2,J=1,K=3,L=2$ in the figure gives $h=2,b=6,s=5,t=4$.
  \end{example}
  
  We give an explanation for the idea of the example. Beginning with the basic example, we have an initial surface $\Sigma$ which is isotopic to the boundary of a $\phi$-invariant regular neighborhood of the initial graph $\Gamma_{0,0,0,0}$. The parameters for the surface map $\phi|_\Sigma$ are $n,h=0,b=1,s=t=1$.
  We hope to enlarge the parameters by modifying the quotient space pair $(|S^3/\phi|,|\Sigma/\phi|)$. 
  Just as in type $(+,+)$, we apply genus surgery and singular surgery to enlarge $h,s$. In Figure \ref{fig:graph} the surgeries are realized by adding $C_l''$ and $\beta_i(i\geq 1)$, i.e., enlarging the parameters $L,I$ for the graph. For boundaries, we need two more surgeries:
  \begin{itemize}
  	\item \textbf{Boundary surgery I}: when $\partial|S^3/\phi|\neq\emptyset$, applying a connected sum on $|\Sigma/\phi|$ with a properly embedded disk in $|(S^3-Z)/\phi|$, we can increase $b$ by $1$ without changing $t$. In the example, we add $C_k'$, i.e., enlarge $K$, to accomplish it.
  	\item \textbf{Boundary surgery II}: when $\partial|S^3/\phi|\neq\emptyset$, applying a connected sum on $|\Sigma/\phi|$ with a properly embedded annulus $A$ in $|(S^3-Z)/\phi|$ such that $\partial A$ surrounds $|Z/\phi|$, we can increase $b,t$ by $2,1$ respectively. In the example, we add $C_j$, i.e., enlarge $J$, to accomplish it.
  \end{itemize}
  
  The same trick works for the remaining cases in type $(-,-)$. Table \ref{table:--} lists the constructions of all extendable maps, where the first three cases are not disjoint, and the fourth case has been discussed in detail as above. 
  Note that if Theorem \ref{thm:--} (1) holds with $s=2t=0$, $h$ must be positive for otherwise $\psi$ can not be surjective. And if Theorem \ref{thm:--} (3) holds, $h,s$ must have the same parity, according to the equation
  \begin{displaymath}
  2(\sum_{i=1}^{h}\psi(\delta_i))+\sum_{k=1}^{s}\psi(\xi_k)\equiv 0\,({\rm mod}\,n)
  \end{displaymath}
  and $2\psi(\delta_i)\equiv\psi(\xi_k)\equiv 2\,({\rm mod}\,4)$.
  
  \begin{table}[htbp]
  	\caption{Extendable periodic maps in type $(-,-)$.}
  	\label{table:--}
  	\centering
  	\begin{tabularx}{300pt}{l|X}
  		\toprule
  		\multicolumn{1}{c}{Case (in Theorem \ref{thm:--})} & \multicolumn{1}{c}{Initial example (in Example \ref{ex:--})}\\
  		\midrule
  		(1) with $s$ even and $t=0$ & (4) with $h=1,b=1,s=0,t=0$ \\
  		\midrule
  		(1) with $s=2t>0$ & (3) with $h=0,b=1,s=2,t=1$\\
  		\midrule
  		(1) with $s$ even and $t>\frac{s}{2}$ & (2)(ii) with
  		$h=0,b=2,s=0,t=1$\\
  		\midrule
  		(1) with $s$ odd & (1)(ii) with
  		$h=0,b=1,s=1,t=1$ \\
  		\midrule
  		(2) with $s$ even & (2)(i) with $n/2$ odd and $h=2,s=0$\\
  		\midrule
  		(2) with $s$ odd & (1)(i) with $n/2$ odd and $h=1,s=1$\\
  		\midrule
  		(3) with $s$ even & (2)(i) with $n/2$ even and $h=2,s=0$\\
  		\midrule
  		(3) with $s$ odd & (1)(i) with $n/2$ even and $h=1,s=1$\\
  		\bottomrule
  	\end{tabularx}
  \end{table}

\subsection{Type $(+,-)$}\label{subsect:realization+-}
  \begin{prop}\label{prop:conjugacy+-}
  	Suppose $f$ satisfies the conditions in Theorem \ref{thm:+-}, then the conjugacy class of $\langle f\rangle$ is uniquely determined by $n,h,s$. 
  \end{prop}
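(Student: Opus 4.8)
The plan is to run the template already used for Propositions \ref{prop:conjugacy++} and \ref{prop:conjugacy--}: fix the allowed invariants $n,h,s$, normalize the isotropy invariant by passing to a suitable power $f^m$ with $\gcd(m,n)=1$, recover the genus $g$ from the Riemann--Hurwitz formula, and then apply the classification theorem. The decisive simplification in type $(+,-)$ is that $f$ is orientation-preserving, so $|\Sigma_g/f|$ is closed and orientable and hence $b=0$; consequently neither $h_1(f)$ nor $h_2(f)$ is defined, and Theorem \ref{thm:classification}(1) applies. Thus the only datum that must be pinned down is the isotropy invariant $\pm(\psi(\xi_1),\dots,\psi(\xi_s))$, considered up to reordering.

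Concretely, fix $n,h,s$. By Theorem \ref{thm:+-} the isotropy invariant of $f$ has the form $\pm(\alpha,(-1)^{s-1}\alpha,-2\alpha,2\alpha,\dots,(-1)^{s}\cdot 2\alpha)$ for some generator $\alpha$ of $\mathbb{Z}_n$. Since $\gcd(\alpha,n)=1$ we have $\langle f^{\alpha}\rangle=\langle f\rangle$, and Lemma \ref{lem:power}, applied with $m=\alpha$ (so that $k=\alpha^{-1}$ and $k\alpha=1$ in $\mathbb{Z}_n$), shows that the isotropy invariant of $f^{\alpha}$ is the same list with $\alpha$ replaced by $1$, that is $\pm(1,(-1)^{s-1},-2,2,\dots,(-1)^{s}\cdot 2)$. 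This normal form depends only on $n$ and $s$. The singular indices of $\Sigma_g/f^{\alpha}$ are the additive orders in $\mathbb{Z}_n$ of these entries: two entries of order $n$ and $s-2$ entries of order $n/2$ (here using that $n$ is even). Feeding these indices together with $h$ into the Riemann--Hurwitz formula determines $g$. Therefore, if $f,f'$ both satisfy Theorem \ref{thm:+-} with the same $n,h,s$, then after replacing them by the appropriate powers their quotient orbifolds have homeomorphic underlying surfaces (closed orientable of genus $h$) and identical isotropy invariants up to reordering, so by Theorem \ref{thm:classification}(1) (equivalently Proposition \ref{prop:conjugacy}) these powers are conjugate; hence $\langle f\rangle$ and $\langle f'\rangle$ are conjugate, which is the claim.

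The degenerate case $n=2$ requires only a one-line remark: then the single generator is $\alpha=1$, necessarily $s=2$, the invariant $\pm(\alpha,-\alpha)=\pm(1,1)$ is already in normal form, and $f^{\alpha}=f$. As for obstacles, essentially none is new here; the only point worth checking is the internal consistency of the list in Theorem \ref{thm:+-}, namely that the relation $\prod_i[\alpha_i,\beta_i]\prod_k\xi_k=1$ forces $\sum_k\psi(\xi_k)=0$ in $\mathbb{Z}_n$ -- one pairs off the $\pm 2\alpha$ entries and observes that the first two entries contribute $\alpha+(-1)^{s-1}\alpha$, which vanishes when $s$ is even and equals $2\alpha$ when $s$ is odd, exactly cancelling the leftover $-2\alpha$ at the end of the tail. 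No genuine difficulty arises in this conjugacy count; the substantive content of type $(+,-)$ lies in the subsequent realization of the extensions built on Example \ref{ex:+-}.
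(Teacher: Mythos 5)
Your proposal is correct and matches the paper's approach: the paper simply declares the proposition "an immediate consequence of Theorem \ref{thm:classification} (1)," and what you have written is exactly the implicit argument spelled out (normalize $\alpha$ to $1$ by passing to $f^{\alpha}$ via Lemma \ref{lem:power}, recover $g$ from Riemann--Hurwitz, note that $b=0$ and $h_1,h_2$ are undefined since the quotient is closed orientable, then apply the classification theorem). No gaps.
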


  This proposition is an immediate consequence of Theorem \ref{thm:classification} (1). So we just turn to the realizations of all extendable maps in type $(+,-)$.
  If $s$ is even, take Example \ref{ex:+-} (1) with $h=0,s=2$; and if $s$ is odd, take Example \ref{ex:+-} (2) with $h=0,s=3$. Similarly, we can apply genus surgery and singular surgery to increase $h,s$ in either case and complete the construction. 

\subsection{Type $(-,+)$}\label{subsect:realization-+}
  \begin{example}\label{ex:-+Klein}
  	We first construct a family of extendable maps such that the quotient surfaces are Klein bottles. 
  	
  	In Example \ref{ex:-+_3} (2), we have an automorphism $\phi$ acting on $(S^3,\Sigma)$ as 
  	\begin{displaymath}
  	\phi(w_1,w_2)=(w_1e^{2\pi i\frac{1}{l}},w_2e^{2\pi i\frac{l/2+1}{l}})
  	\end{displaymath}
  	with $l\equiv 0 \,({\rm mod}\,4)$. 
  	We are now to do branch surgery on it. As the branch axis $S^1$ is on $\Sigma$, generally $\Sigma$ does not lift to a surface. Though, we can disturb $\Sigma$ $\phi$-equivariantly to get over it. For example, take the solid tori $V_1,V_2$ as in Example \ref{ex:-+_3}, and let $V_0$ be a thinner solid torus in $V_1$:
  	\begin{displaymath}
  	V_0=\{(w_1,w_2)\in S^3\subset\mathbb{C}^2:|w_2|\leq \frac{1}{2}\}.
  	\end{displaymath}
  	We disturb $\Sigma$ in $V_0$ so that it looks like the same as in $V_2$. That is to say, we
  	define 
  	\begin{displaymath}
  	\begin{array}{cccc}
  	\tau : & V_2 &\to & V_0\\
  	& (w_1,w_2) &\mapsto & (\frac{\sqrt{1-|w_1|^2/2}}{|w_2|}w_2,\frac{\sqrt{2}}{2}w_1),
  	\end{array}
  	\end{displaymath}
  	and then replace $\Sigma$ with the surface
  	\begin{displaymath}
  	\hat{\Sigma}=(\Sigma-\Sigma\cap V_0)\cup \tau(\Sigma\cap V_2).
  	\end{displaymath}
  	
  	Suppose $p,q$ are coprime positive integers. Without loss of generality, assume $p$ is odd. 
  	Choose the smallest $m_0\in\mathbb{N}$ such that
  	\begin{displaymath}
  	\left\{\begin{array}{l}
  	m_0\equiv l/2+1 \,({\rm mod}\,l),\\
  	{\rm gcd}(m_0,p)=1.
  	\end{array}
  	\right.
  	\end{displaymath}
  	By Lemma \ref{lem:number-theory} (1), such $m_0$ exists.
  	Let $\phi':S^3\to S^3$ be the map defined by
  	\begin{displaymath}
  	\phi'(w_1,w_2)=(w_1e^{2\pi i\frac{1}{ql}},w_2e^{2\pi i\frac{m_0}{pl}}).
  	\end{displaymath}
  	Then $(\phi')^l$ induces a branched covering map
  	\begin{displaymath}
  	\begin{matrix}
  	\omega: & S^3 &\to &|S^3/(\phi')^l|\cong S^3\\
  	& (w_1,w_2) &\mapsto &(|w_1|e^{i\cdot q\cdot {\rm Arg}(w_1)},|w_2|e^{i\cdot p\cdot {\rm Arg}(w_2)}).
  	\end{matrix}
  	\end{displaymath}
  	For the $\phi$-invariant surface $\hat{\Sigma}\subset |S^3/(\phi')^l|\cong S^3$, denote $\Sigma'=\omega^{-1}(\hat{\Sigma})$. Obviously, $|\Sigma'/\phi'|\cong|\hat{\Sigma}/\phi|\cong|\Sigma/\phi|$, so the restriction of $\phi'$ on $\Sigma'$ is an extendable map with $n=pql,h=2,s=2,t=1$.
  	Besides, we can also use singular surgery to make $t>1$ or $s-t>1$. 
  	
  	Denote the extendable surface map $\phi' |_{\Sigma'}$ by $f'$. Now we compute its isotropy invariant and $h_1(f'), h_2(f')$. 
  	Denote the epimorpshim for $f'$ by $\psi'$, and let $i_*:\pi_1(\Sigma'/f')\to\pi_1(S^3/\phi')$ be the homomorphism induced by the inclusion. Then there is a commutative diagram with the two rows exact: 
  	\begin{displaymath}
  	\xymatrix{
  		1\ar[r] & \pi_1(\Sigma')\ar[r] & \pi_1(\Sigma'/f')\ar[r]^-{\psi'}\ar[d]_{i_*} & \mathbb{Z}_{pql}\ar[d]^{f'\leftrightarrow 1\leftrightarrow\phi'}_=\ar[r] & 1\\
  		1\ar[r] & \pi_1(S^3)\ar[r] & \pi_1(S^3/\phi')\ar[r] & \mathbb{Z}_{pql}\ar[r] & 1.
  	}
  	\end{displaymath}
  	With an abuse of notation, denote the epimorphism for the $\phi'$-action on $S^3$ by $\psi'$ too:
  	\begin{displaymath}
  	\begin{split}
  	\psi':\pi_1(S^3/\phi')\to\langle \phi'\rangle &=\mathbb{Z}_{pql}\\
  	\phi' &\leftrightarrow 1.
  	\end{split}
  	\end{displaymath} 
  	The $\phi'$-action on $S^3\subset\mathbb{C}^2$ has a fundamental domain $F$ which is the convex part bounded by the disks ${\rm Arg}(w_1)=0$, ${\rm Arg}(w_1)=\frac{2\pi}{ql}$, ${\rm Arg}(w_2)=-\frac{\pi}{p}$ and ${\rm Arg}(w_2)=\frac{\pi}{p}$, see Figure \ref{fig:fundamental-domain}. $|S^3/\phi'|$ is homeomorphic to a lens space $L(l,m_0)\cong L(l,l/2+1)$, which can be obtained from $F$ by gluing its boundary.  
  	\begin{figure}[htbp]
  		\centering
  		\setlength{\unitlength}{1bp}%
  		\begin{picture}(174.02, 160.44)(0,0)
  			\put(0,0){\includegraphics{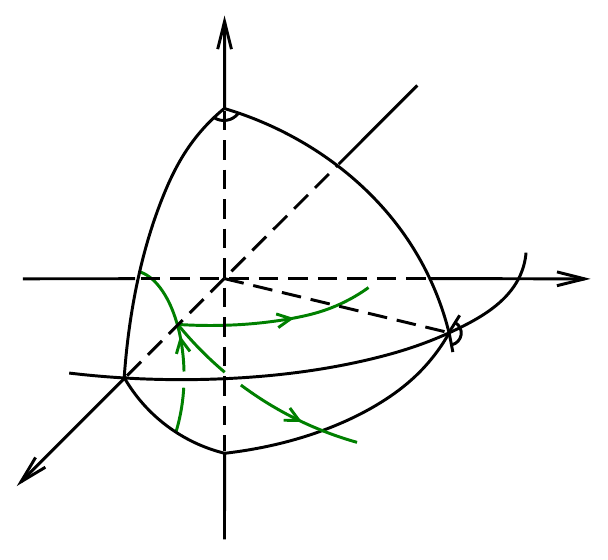}}
  			\put(5.67,30.73){\fontsize{8.83}{10.60}\selectfont $x$}
  			\put(161.77,85.57){\fontsize{8.83}{10.60}\selectfont $y$}
  			\put(67.43,147.01){\fontsize{8.83}{10.60}\selectfont $z$}
  			\put(65.66,117.90){\fontsize{8.83}{10.60}\selectfont $\frac{2\pi}{ql}$}
  			\put(132.94,58.48){\fontsize{8.83}{10.60}\selectfont $\frac{2\pi}{p}$}
  			\put(58.94,82.40){\fontsize{8.83}{10.60}\selectfont $0$}
  			\put(11.03,51.21){\fontsize{8.83}{10.60}\selectfont $S^1$}
  			\put(53.50,40.72){\fontsize{8.83}{10.60}\selectfont \textcolor[rgb]{0, 0.50196, 0}{$\sigma_1$}}
  			\put(72.92,61.03){\fontsize{8.83}{10.60}\selectfont \textcolor[rgb]{0, 0.50196, 0}{$\sigma_2$}}
  			\put(86.09,43.10){\fontsize{8.83}{10.60}\selectfont \textcolor[rgb]{0, 0.50196, 0}{$\lambda$}}
  		\end{picture}%
  		\caption{\label{fig:fundamental-domain}%
  			A fundamental domain for a cyclic action on $S^3$.}
  	\end{figure}
  	
  	There are some typical elements in $\pi_1(S^3/\phi')$:
  	\begin{itemize}
  		\item $\sigma_1$: represented by the image of the oriented arc
  		\begin{displaymath}
  		\{(\frac{\sqrt{2}}{2},\frac{\sqrt{2}}{2}e^{i\theta}):-\frac{\pi}{p}\leq\theta\leq \frac{\pi}{p}\}.
  		\end{displaymath}
  		$\sigma_1$ has order $p$ and $\psi'(\sigma_1)$ corresponds to the automorphsim $(\phi')^{\psi'(\sigma_1)}$ of $S^3$ defined by
  		\begin{displaymath}
  		(w_1,w_2)\mapsto (w_1,w_2e^{2\pi i\frac{1}{p}}).
  		\end{displaymath} 
  		So $\psi'(\sigma_1)=\alpha_1 ql$, where $\alpha_1$ is a solution of the congruence equation
  		\begin{displaymath}
  		\alpha_1 q m_0\equiv 1 \,({\rm mod}\,p).
  		\end{displaymath}
  		
  		\item $\sigma_2$: represented by the image of the oriented arc
  		\begin{displaymath}
  		\{(\frac{\sqrt{2}}{2}e^{i\theta},\frac{\sqrt{2}}{2}):0\leq\theta\leq \frac{2\pi}{q}\}.
  		\end{displaymath}
  		$\sigma_2$ has order $q$ and $\psi'(\sigma_2)$ corresponds to the map defined by
  		\begin{displaymath}
  		(w_1,w_2)\mapsto (w_1e^{2\pi i\frac{1}{q}},w_2).
  		\end{displaymath} 
  		So $\psi'(\sigma_2)=\alpha_2 pl$, where $\alpha_2$ is a solution of the congruence equation
  		\begin{displaymath}
  		\alpha_2 p \equiv 1 \,({\rm mod}\,q).
  		\end{displaymath}
  		
  		\item $\lambda$: represented by the image of the oriented curve
  		\begin{displaymath}
  		\{(\frac{\sqrt{2}}{2}e^{i\theta/q},\frac{\sqrt{2}}{2}e^{im_0\theta/p}):0\leq\theta\leq \frac{2\pi}{l}\}.
  		\end{displaymath}
  		$\lambda$ has order $n$ and $\psi'(\lambda)$ corresponds to the map defined by
  		\begin{displaymath}
  		(w_1,w_2)\mapsto (w_1e^{2\pi i\frac{1}{ql}},w_2e^{2\pi i\frac{m_0}{pl}}).
  		\end{displaymath}
  		So $\psi'(\lambda)=1$. Note that in $|S^3/\phi'|\cong L(l,l/2+1)$, $\lambda$ is homotopic to a core curve.
  	\end{itemize}
  
  By choosing a canonical generator system 
  \begin{displaymath}
  \mathcal{G}=\{\delta_1,\delta_2,\xi_1,\xi_2,\cdots,\xi_s\}
  \end{displaymath}
  of $\pi_1(\Sigma'/f')$, we may assume $i_*(\xi_i)=(-1)^i\sigma_1(1\leq i\leq t)$ and $i_*(\xi_j)=(-1)^{j-t}\sigma_2(t<j\leq s)$. 
  Then we have 
  \begin{displaymath}
  \begin{split}
  \psi'(\xi_i) &=(-1)^i \alpha ql,\,i=1,2,\cdots,t;\\
  \psi'(\xi_j) &=(-1)^{j-t} \alpha pl,\,j=t+1,t+2,\cdots,s,
  \end{split} 
  \end{displaymath}
  where $\alpha$ is a solution of the congruence equations
  \begin{displaymath}
  \left\{
  \begin{split}
  \alpha q m_0 &\equiv 1\,({\rm mod}\,p),\\
  \alpha p &\equiv 1\,({\rm mod}\,q).
  \end{split} 
  \right.
  \end{displaymath}
  So the isotropy invariant of $f'$ is
  \begin{displaymath}
  (\underbrace{\pm\alpha ql,\cdots,\pm\alpha ql}_{t},\underbrace{\pm\alpha pl,\cdots,\pm\alpha pl}_{s-t}).
  \end{displaymath}
  
  According to Lemma \ref{lem:unique-embedding}, in $|S^3/\phi'|\cong L(l,l/2+1)\cong L(l,l/2-1)$,  $i_*(\delta_1),i_*(\delta_2)$ are homotopic to $\pm\lambda,\pm(l/2-1)\lambda$ respectively. That means
  \begin{displaymath}
  \psi'(\delta_1) \equiv \pm 1,\,
  \psi'(\delta_2) \equiv \pm (l/2-1)\,({\rm mod}\,l).
  \end{displaymath}
  So we have 
  \begin{displaymath}
  h_2(f')=\{\pm 1,\pm(l/2-1)\}\subset\mathbb{Z}_{l/2}.
  \end{displaymath}
  
  If $2\notin\{p,q\}$, $h_1(f')$ is defined. To compute it, as $\psi'(\xi_i)$'s are fixed, we only need to determine the value of $\psi'(\delta_1\delta_2)$. It satisfies 
  \begin{displaymath}
  2(\psi'(\delta_1\delta_2))-\alpha ql-\alpha pl=0\in\mathbb{Z}_{pql}.
  \end{displaymath}
  The equation has two solutions in $\mathbb{Z}_{pql}$: $\alpha(p+q)l/2$ and $\alpha(p+q)l/2+pql/2$. We can exclude one of them with one more branch surgery as follows.
  
  Let $\tilde{\phi}:S^3\to S^3$ be the map defined by
  \begin{displaymath}
  \tilde{\phi}(w_1,w_2)=(w_1e^{2\pi i\frac{1}{2ql}},w_2e^{2\pi i\frac{m_0}{pl}}).
  \end{displaymath}
  Then $(\tilde{\phi})^{pql}$ is a $\pi$-rotation around $Z$ and induces a $2$-fold branched covering map
  \begin{displaymath}
  \tilde{\omega}: S^3 \to |S^3/(\tilde{\phi})^{pql}|\cong S^3.
  \end{displaymath}
  Let $\tilde{\Sigma}$ be the surface $\tilde{\omega}^{-1}(\Sigma')$. Denote the epimorphism for $\tilde{\phi}$ by $\tilde{\psi}$. 
  
  If $q\neq 1$, 
  $\tilde{\Sigma}/\tilde{\phi}$ has the same singular points with $\Sigma'/\phi'$, though the index-$q$ singular points now have index $2q$.
  So $\mathcal{G}$ provides a canonical generator system for $\pi_1(\tilde{\Sigma}/\tilde{\phi})$. With an abuse of notations, similarly we can assume  
  \begin{displaymath}
  \begin{split}
  \tilde{\psi}(\xi_i) &=(-1)^i \tilde{\alpha}\cdot 2ql,\,i=1,2,\cdots,t;\\
  \tilde{\psi}(\xi_j) &=(-1)^{j-t} \tilde{\alpha} pl,\,j=t+1,t+2,\cdots,s,
  \end{split} 
  \end{displaymath}
  where $\tilde{\alpha}$ is a solution of the congruence equations
  \begin{displaymath}
  \left\{
  \begin{split}
  \tilde{\alpha}\cdot 2q m_0 &\equiv 1\,({\rm mod}\,p),\\
  \tilde{\alpha} p &\equiv 1\,({\rm mod}\,2q).
  \end{split} 
  \right.
  \end{displaymath}
  Thus
  \begin{displaymath}
  \tilde{\psi}(\delta_1\delta_2)=\tilde{\alpha}(p+2q)l/2\text{ or }\tilde{\alpha}(p+2q)l/2+pql\in\mathbb{Z}_{2pql}.
  \end{displaymath}
  By Lemma \ref{lem:modulo},
  \begin{displaymath}
  \psi'(\delta_1\delta_2)\equiv\tilde{\psi}(\delta_1\delta_2)\equiv\tilde{\alpha}(p+2q)l/2\,({\rm mod}\,pql).
  \end{displaymath}
  $\tilde{\alpha}$ is unique modulo $2pq$, so $\psi'(\delta_1\delta_2)\in\mathbb{Z}_n$ is determined.
  
  If $q=1$, then $s=t$. 
  The regular points on $(\Sigma'\cap S^1)/\phi'$ now become index-two singular points in $\tilde{\Sigma}/\tilde{\phi}$.
  The number of them is odd, according to Lemma \ref{lem:intersections}. So there are  $\tilde{\xi}_{t+1},\cdots,\tilde{\xi}_{\tilde{s}}$ with $\tilde{s}>s=t$ and $\tilde{s}-t$ odd, such that
  \begin{displaymath}
  \mathcal{G}\cup\{\tilde{\xi}_{t+1},\cdots,\tilde{\xi}_{\tilde{s}}\}
  \end{displaymath}
  is a canonical generator system for $\pi_1(\tilde{\Sigma}/\tilde{\phi})$. Similarly we assume
  \begin{displaymath}
  \begin{split}
  \tilde{\psi}(\xi_i) &=(-1)^i \tilde{\alpha}\cdot 2ql=(-1)^i\tilde{\alpha}\cdot 2l\in\mathbb{Z}_{2pql},\,i=1,2,\cdots,t;\\
  \tilde{\psi}(\tilde{\xi}_j) &=(-1)^{j-t} \tilde{\alpha} pl=pl\in\mathbb{Z}_{2pql},\,j=t+1,t+2,\cdots,\tilde{s},
  \end{split} 
  \end{displaymath}
  Thus we still have
  \begin{displaymath}
  \tilde{\psi}(\delta_1\delta_2)=\tilde{\alpha}(p+2q)l/2\text{ or }\tilde{\alpha}(p+2q)l/2+pql\in\mathbb{Z}_{2pql},
  \end{displaymath}
  \begin{displaymath}
  \psi'(\delta_1\delta_2)\equiv\tilde{\psi}(\delta_1\delta_2)\equiv\tilde{\alpha}(p+2q)l/2\,({\rm mod}\,pql).
  \end{displaymath}
  
  So in either case, $\psi'(\delta_1\delta_2)\equiv\tilde{\alpha}(p+2q)l/2\,({\rm mod}\,pql)$, and $h_1(f')$ can be computed by definition.
  
  Now take the smallest positive integer $k$ that satisfies
  \begin{displaymath}
  \left\{
  \begin{array}{l}
  k \equiv q m_0 \,({\rm mod}\,p),\\
  k \equiv p \,({\rm mod}\,2q),\\
  {\rm gcd}(k,pql)=1.
  \end{array} 
  \right.
  \end{displaymath}
  With Lemma \ref{lem:number-theory} (1) we see such $k$ exists.
  We can simply verify $k\alpha\equiv 1\,({\rm mod}\,p),\,k\alpha\equiv 1\,({\rm mod}\,q)$ and $\,k\tilde{\alpha}\equiv (p+1)/2\,({\rm mod}\,p),\,k\tilde{\alpha}\equiv 1\,({\rm mod}\,2q)$.
  Take $k_0$ such that $kk_0\equiv 1\,({\rm mod}\,pql)$. Let $f_0=(f')^{k_0}$ and denote the corresponding epimorphism by $\psi_0$. By Lemma \ref{lem:power}, 
  \begin{displaymath}
  \begin{split}
  \psi_0(\xi_i) &=k\psi'(\xi_i)=(-1)^ik\alpha ql=(-1)^iql\in\mathbb{Z}_{pql},i=1,2,\cdots,t;\\
  \psi_0(\xi_j) &=k\psi'(\xi_j)=(-1)^{j-t}k\alpha pl=(-1)^{j-t}pl\in\mathbb{Z}_{pql},j=t+1,t+2,\cdots,s.
  \end{split}  
  \end{displaymath}
  So $f_0$ has isotropy invariant 
  \begin{displaymath}
  (\underbrace{\pm ql,\cdots,\pm ql}_{t},\underbrace{\pm pl,\cdots,\pm pl}_{s-t}),
  \end{displaymath}
  and 
  \begin{displaymath}
  h_2(f_0)=\{\pm k,\pm(l/2-k)\}\subset\mathbb{Z}_{l/2}.
  \end{displaymath}
  If $h_1(f_0)$ is defined, i.e., $2\notin\{p,q\}$, then we can compute it according to the definition.
  For example, when $p>2,q>2$,
  as $ql,pl\in\{0,2,\cdots,pql/2-2\}$, we have
  \begin{displaymath}
  \begin{split}
  \chi_k &=
  \begin{cases}
  0, & \textrm{if }\psi_0(\xi_k)\in\{2,4,\cdots,\frac{pql}{2}-2\}\subset\mathbb{Z}_{pql};\\
  1, & \textrm{if }\psi_0(\xi_k)\in\{\frac{pql}{2}+2,\frac{pql}{2}+4,\cdots,pql-2\}\subset\mathbb{Z}_{pql}
  \end{cases}\\
  &=
  \begin{cases}
  1, & \textrm{if } 1\leq k\leq t\text{ and }k\text{ is odd};\\
  0, & \textrm{if } 1\leq k\leq t\text{ and }k\text{ is even};\\
  1, & \textrm{if } t<k\leq s\text{ and }k-t\text{ is odd};\\
  0, & \textrm{if } t<k\leq s\text{ and }k-t\text{ is even},
  \end{cases}
  \end{split}
  \end{displaymath}
  \begin{displaymath}
  \begin{split}
  h_1(f_0) &=\psi_0(\delta_1\delta_2)+\sum_{k=1}^{s}\chi_k\psi_0(\xi_k)\\ &=k\tilde{\alpha}(p+2q)l/2+\frac{t+1}{2}\cdot(-ql)+\frac{s-t+1}{2}\cdot(-pl)\\
  &=pl/2+\frac{p+1}{2}ql-(t+1)ql/2-(s-t+1)pl/2\\
  &=pql/2-tql/2-(s-t)pl/2\in\mathbb{Z}_{pql}.
  \end{split}
  \end{displaymath}
  With similar check for other cases, we finally conclude that
  \begin{displaymath}
  h_1(f_0)=pql/2-\max\{t,1\}ql/2-\max\{s-t,1\}pl/2\in\mathbb{Z}_{pql}.
  \end{displaymath}
\end{example}

  \begin{prop}\label{prop:conjugacy-+}
  	(1) Suppose $f$ satisfies Theorem \ref{thm:-+} (1), then the conjugacy class of $\langle f\rangle$ is uniquely determined by $n,h,s$. 
  	
  	(2) Suppose $f$ satisfies Theorem \ref{thm:-+} (2), then the conjugacy class of $\langle f\rangle$ is uniquely determined by $n,h,s,l$. 
  	
  	(3) Suppose $f$ satisfies Theorem \ref{thm:-+} (3), then the conjugacy class of $\langle f\rangle$ is uniquely determined by $n,h,s,t,p,q$.
  \end{prop}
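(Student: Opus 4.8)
The plan is to run, for each of the three cases, the normalization strategy set out at the beginning of this section: fix the parameters listed in the proposition, replace $f$ by a suitable power $f^m$ with $\gcd(m,n)=1$ (so that $\langle f^m\rangle=\langle f\rangle$) in order to bring the isotropy invariant — together with $h_1(f)$ and $h_2(f)$ whenever they are defined — into a normal form depending only on those parameters, read off the genus $g$ of $\Sigma_g$ from the Riemann--Hurwitz formula, and then invoke the Classification Theorem \ref{thm:classification}.

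For (1), $|\Sigma_g/f|$ is orientable, so only the isotropy invariant is relevant. By Lemma \ref{lem:power}, passing to the power $f^m$ with $m$ a representative of $\alpha\in\mathbb{Z}_n$ turns the isotropy invariant into $(2;\,\underbrace{2,\dots,2}_{(s-1)/2},\underbrace{-2,\dots,-2}_{(s+1)/2})$, which depends only on $n$ and $s$; combined with $h$, Theorem \ref{thm:classification}(1) then gives the claim. For (2), $|\Sigma_g/f|$ is non-orientable with $b=1$, and since $n/2$ is odd, $h_1(f)$ is not defined. The same kind of power normalizes the isotropy invariant to $(\pm 2;\,\pm 2l,\dots,\pm 2l)$, and the factor $l$ is unchanged by the power, since it depends only on the orders of the elements appearing; hence this tuple is determined by $n,s,l$. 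If $h\neq 2$ I would conclude via Theorem \ref{thm:classification}(2)(i); if $h=2$, then after normalization $\gcd(\psi(\delta_1)+\psi(\delta_2),2,2l,\dots,n)$ divides $2$, so $h_2(f)$ lies in $\mathbb{Z}_2$ (or is trivial) and is readily seen to be forced, and Theorem \ref{thm:classification}(2) again applies.

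For (3), $|\Sigma_g/f|$ is closed non-orientable, the orders $p,q$ of $\beta,\gamma$ are coprime, and $n=pql$. First I would apply Lemma \ref{lem:number-theory}(4) to get a generator $\tau$ of $\mathbb{Z}_n$ with $\beta=\tau ql$ and $\gamma=\tau pl$; replacing $f$ by $f^{\tau}$ and using Lemma \ref{lem:power}, the isotropy invariant becomes $(\underbrace{\pm ql,\dots,\pm ql}_{t},\underbrace{\pm pl,\dots,\pm pl}_{s-t})$, which depends only on $n,s,t,p,q$. It then remains to normalize $h_1(f)$ (when $n/2$ is even and $2\notin\{p,q\}$) and $h_2(f)$ (when $h=2$) without disturbing this invariant. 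The powers $f^m$ that preserve the normalized isotropy invariant are exactly those with $m\equiv\pm 1\pmod p$ and $m\equiv\pm 1\pmod q$, and these $m$ still range freely modulo $l$; tracking via Lemmas \ref{lem:power} and \ref{lem:modulo} how $\psi(\delta_1)+\psi(\delta_2)$ and the cutoff functions $\chi_k$ transform under such powers, I would show that $h_1(f)$, which Theorem \ref{thm:-+}(3)(i) confines to $l/2+l\mathbb{Z}$, can be driven to a single fixed value; and in the $h=2$ subcase the same analysis should match $h_2(f)$ with the multiset $\{\pm k,\pm(l/2-k)\}\subset\mathbb{Z}_{l/2}$ for the explicit $k$ produced in Example \ref{ex:-+Klein}. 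With all invariants in normal form and $g$ determined, Theorem \ref{thm:classification}(2) finishes the argument.

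The hard part will be the final step of (3) with $h=2$: normalizing $h_1$ and $h_2$ simultaneously. Once $h_1$ has been fixed, the powers still available to adjust $h_2$ are further constrained by the requirement of not changing that value of $h_1$, leaving only a thin subgroup of $(\mathbb{Z}/n\mathbb{Z})^{\times}$ to manoeuvre in, and verifying that this remaining freedom suffices to reach the canonical $h_2=\{\pm k,\pm(l/2-k)\}$ is precisely the congruence bookkeeping — the simultaneous conditions defining $k$ and the auxiliary $m_0$ — carried out in Example \ref{ex:-+Klein}. Transplanting that computation from the particular model $f_0$ there to an arbitrary $f$ with the prescribed invariants is where the real work lies.
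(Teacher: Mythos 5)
Your overall strategy --- normalize the isotropy invariant and the invariants $h_1,h_2$ by passing to a power $f^m$ with $\gcd(m,n)=1$, compute $g$ from Riemann--Hurwitz, and invoke Theorem \ref{thm:classification} --- is the paper's strategy, and your treatments of (1), (2), and the isotropy-invariant normalization in (3) match the paper's. The remaining discussion of (3) also agrees in outline for the case $n/2$ even, $h\neq 2$, $2\notin\{p,q\}$: the paper likewise observes that $\sum_i\psi(\delta_i)$ can only be one of two values $x_1$ and $x_1+n/2$, that the constraint $h_1(f)\equiv l/2\,(\mathrm{mod}\,l)$ kills one of them when $pq$ is odd, and that when $pq$ is even an explicit power (inverse of $(pq+1)+2pqd$) swaps the two without disturbing the isotropy invariant. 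Your sketch of this step is plausible but not carried out; the actual congruence computation is the substance of that case.

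Where you go astray is the subcase $h=2$ of (3), which you single out as ``the hard part'' and ``where the real work lies.'' It is not: the hypothesis ``$f$ satisfies Theorem \ref{thm:-+} (3)'' includes condition (ii), which for $h=2$ literally states that $f$ is conjugate to some power of the explicit map $f_0$, whose invariants are written down in terms of $n,h,s,t,p,q$. Since $f$ has order $n$, that power is coprime to $n$, so $\langle f\rangle$ is conjugate to $\langle f_0\rangle$ and the conclusion is immediate --- the paper simply lists ``$h=2$'' among the straightforward cases. Your plan instead is to normalize $h_1$ and $h_2$ simultaneously from their numerical values alone and to ``transplant'' the computation of Example \ref{ex:-+Klein} to an arbitrary $f$. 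Without invoking clause (ii) you would first have to determine exactly which multisets $h_2(f)$ are admissible and show each is carried to the canonical $\{\pm k,\pm(l/2-k)\}$ by a power preserving the other invariants; but that is precisely the information clause (ii) supplies, so the route as described is either circular or incomplete. The fix is only to read the hypothesis correctly, but as written your proposal leaves its self-declared hardest step resting on an argument that cannot be completed in the form you outline.
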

  
  \begin{proof}
  	Also, $g$ can be figured out from the Riemann-Hurwitz formula, and we may assume $\alpha=1\in\mathbb{Z}_n$. Then (1) follows Theorem \ref{thm:classification}. 
  	For (2), $h_1$ does not exist, and in the case $h=2$ we must have $h_2(f)=\{\pm1,\pm 1\}\subset\mathbb{Z}_2$ for ${\rm gcd}(2\alpha,2l\alpha,n)=2$. Therefore (2) is also verified, and we turn to (3). 
  	
  	The following cases are straightforward.
  	\begin{itemize}
  		\item $n/2$ is odd. Then $l/2$ is odd and $h\neq 2$. So $f$ has no invariants $h_1,h_2$.
  		\item $n/2$ is even, $h\neq 2$ and $2\in\{p,q\}$. Without loss of generality, suppose $q=2$, then by the assumptions in the theorem we have $t\neq s$, and there exists $\pm pl=n/2$ in the isotropy invariant. So $f$ has no invariants $h_1,h_2$.
  		\item $h=2$. 
  	\end{itemize}
  	There is only one remaining case where $n/2$ is even, $h\neq 2$ and $2\notin\{p,q\}$. We only need to focus on the normalization of $h_1(f)$ for that case.  
  	
  	Fix a canonical generator system 
  	\begin{displaymath}
  	\mathcal{G}=\{\delta_1,\cdots,\delta_h,\xi_1,\cdots,\xi_s\}
  	\end{displaymath}
  	such that $\psi(\xi_i)=\alpha ql\,(1\leq i\leq t)$, $\psi(\xi_j)=\alpha pl\,(t+1\leq j\leq s)$.
  	The relation 
  	\begin{displaymath}
  	\prod_{i=1}^{h}\delta_i^2\prod_{k=1}^{s}\xi_k=1\in\pi_1(\Sigma_g/f)
  	\end{displaymath}
  	implies that $x=\sum\limits_{i=1}^{h}\psi(\delta_i)$ is a solution for the following equation:
  	\begin{displaymath}
  	2x+t\alpha ql+(s-t)\alpha pl\equiv 0 \,({\rm mod}\,n).
  	\end{displaymath}
  	There are two solutions:
  	\begin{displaymath}
  	x_1=-(t\alpha ql+(s-t)\alpha pl)/2;\,x_2=x_1+n/2.
  	\end{displaymath}
  	So $\sum\limits_{i=1}^{h}\psi(\delta_i)=x_1$ or $\sum\limits_{i=1}^{h}\psi(\delta_i)=x_1+n/2$.
  	
  	If $pq$ is odd, $n/2\equiv l/2\,({\rm mod}\,l)$. 
  	By definition,
  	\begin{displaymath}
  	h_1(f)=\sum_{i=1}^{h}\psi(\delta_i)+t\chi\alpha ql +(s-t)\chi'\alpha pl\in\mathbb{Z}_n,
  	\end{displaymath}
  	where 
  	\begin{displaymath}
  	\begin{split}
  	\chi &=
  	\begin{cases}
  	0, & \textrm{if }\alpha ql\in\{0,2,4,\cdots,\frac{n}{2}-2\}\subset\mathbb{Z}_n;\\
  	1, & \textrm{if }\alpha ql\in\{\frac{n}{2}+2,\frac{n}{2}+4,\cdots,n-2\}\subset\mathbb{Z}_n;
  	\end{cases}\\
  	\chi' &=
  	\begin{cases}
  	0, & \textrm{if }\alpha pl\in\{0,2,4,\cdots,\frac{n}{2}-2\}\subset\mathbb{Z}_n;\\
  	1, & \textrm{if }\alpha pl\in\{\frac{n}{2}+2,\frac{n}{2}+4,\cdots,n-2\}\subset\mathbb{Z}_n.
  	\end{cases}
  	\end{split}
  	\end{displaymath}
  	Note that $n/2\equiv pql/2\equiv l/2\,({\rm mod}\,l)$. With the condition $h_1(f)\equiv l/2\,({\rm mod}\,l)$, we see that one of $x_1,x_1+n/2$ is impossible for $\sum\limits_{i=1}^{h}\psi(\delta_i)$. Then $h_1(f)$, and therefore $\langle f\rangle$, are determined. 
  	
  	If $pq$ is even, it suffices to show that there exists a positive integer $m$ such that $\langle f^m\rangle=\langle f\rangle$, $h_1(f^m)=h_1(f)+n/2$ and $f^m,f$ have the same isotropy invariant. 
  	In fact, if it holds, then we can always normalize $h_1(f)$ to $x_1$ and thus $\langle f\rangle$ is determined up to conjugacy.
  	By Lemma \ref{lem:number-theory} (1), we choose a positive integer $d$ such that ${\rm gcd}((pq+1)+(2pq)d,n)=1$. Choose $m\in\mathbb{Z}$ such that $m\cdot((pq+1)+(2pq)d)\equiv 1\,({\rm mod}\,n)$. Let $\psi'$
  	be the corresponding epimorphism for $f^m$. By Lemma \ref{lem:power}, for each $\gamma\in\pi_1(\Sigma/f)$, we have $\psi'(\gamma)=((pq+1)+(2pq)d)\psi(\gamma)$. 
  	Without loss of generality, assume $p$ is odd and $q$ is even, hence $s-t$ is odd. If 
  	\begin{displaymath}
  	\sum_{i=1}^{h}\psi(\delta_i)=x_1+n/2=(pq-t\alpha q-(s-t)\alpha p)l/2,
  	\end{displaymath}
  	we have
  	\begin{displaymath}
  	\begin{split}
  	\sum_{i=1}^{h}\psi'(\delta_i)&=((pq+1)+(2pq)d)\sum_{i=1}^{h}\psi(\delta_i)\\
  	&=(pq+1)\sum_{i=1}^{h}\psi(\delta_i)+0\\
  	&=\frac{pq}{2}(pql)-t\alpha q\frac{pql}{2}-(s-t)\alpha(pql)\frac{p}{2}+\sum_{i=1}^{h}\psi(\delta_i)\\
  	&=0+0+\frac{n}{2}+\sum_{i=1}^{h}\psi(\delta_i)\in\mathbb{Z}_n;
  	\end{split} 
  	\end{displaymath}
  	if $\sum\limits_{i=1}^{h}\psi(\delta_i)=x_1$, a similar check also shows 
  	\begin{displaymath}
  	\sum\limits_{i=1}^{h}\psi'(\delta_i)=\sum\limits_{i=1}^{h}\psi(\delta_i)+n/2.
  	\end{displaymath}
  	Moreover, for $i=1,2,\cdots,s$, as $\psi(\xi_i)=\alpha ql\text{ or }\alpha pl$, we have
  	\begin{displaymath}
  	\psi'(\xi_i)=((pq+1)+(2pq)d)\psi(\xi_i)=\psi(\xi_i)\in\mathbb{Z}_n.
  	\end{displaymath} 
  	So $f^m,f$ have the same isotropy invariant and $h_1(f^m)=h_1(f)+n/2$.
  \end{proof}

  Similarly, all extendable maps in type $(-,+)$ can be constructed from Examples \ref{ex:-+_1}, \ref{ex:-+_2}, \ref{ex:-+_3} and \ref{ex:-+_4} with the surgeries, see Table \ref{table:-+}.
  
  \begin{table}[htbp]
  	\caption{Extendable periodic maps in type $(-,+)$.}
  	\label{table:-+}
  	\centering
  	\begin{tabularx}{340pt}{l|X}
  		\toprule
  		\multicolumn{1}{c}{Case (in Theorem \ref{thm:-+})} & \multicolumn{1}{c}{Initial example}\\
  		\midrule
  		(1) & Example \ref{ex:-+_1} with $h=0,s=1$ \\
  		\midrule
  		(2) with $h$ odd & Example \ref{ex:-+_3} (1) with 
  		$n=2l,h=1,s=0$\\
  		\midrule
  		(2) with $h$ even & Example \ref{ex:-+_4} (1) with
  		$n=2l,h=2,s=0$\\
  		\midrule
  		(3) with $h=1$ & Example \ref{ex:-+_2} with
  		$h=1,s=t=0,p=q=1$ \\
  		\midrule
  		(3) with $h$ even & Example \ref{ex:-+_3} (2) with $h=2,s=t=0,p=q=1$\\
  		\midrule
  		(3) with $h$ odd and $h\geq 3$ & Example \ref{ex:-+_4} (2) with $h=3,s=t=0,p=q=1$\\
  		\bottomrule
  	\end{tabularx}
  \end{table}

\section{Necessary conditions for extendability}\label{sect:necessity}
  A periodic automorphism of $S^2$ is conjugate to a rational rotation, maybe composed with a reflection, hence must be extendable in each type with respect to the standard embedding $S^2\hookrightarrow S^3$. So in this section, we assume the genus $g$ is no less than $1$. 
  
  Suppose a periodic map $f\in {\rm Aut}(\Sigma_g)$ of order $n$ is extendable over $S^3$, with respect to an embedding $e:\Sigma_g\hookrightarrow S^3$ and an automorphism $\phi\in {\rm Aut}(S^3)$. For convenience, we identify $\Sigma_g$ with its image $e(\Sigma_g)$. Denote the quotient map by $\rho:(S^3,\Sigma_g)\to(S^3/\phi,\Sigma_g/f)$.
  As we work in the smooth category, $\phi$ can be assumed to be a torsion in the orthogonal group $O(4)$. By the assumption $g\geq1$ we see that $\phi^m=\text{id}$ if and only if $f^m=\text{id}$, so $\phi$ is also of order $n$.
  
  Maps extendable over $S^3$ in type $(+,+)$ are classified  in \cite{NWW}. So we only verify the necessity of the listed conditions in Theorems \ref{thm:--}, \ref{thm:+-} and \ref{thm:-+}.
  
  We denote the fixed point set of an automorphism by ${\rm fix}(\cdot)$. And for a group action $G\curvearrowright X$, we use ${\rm Fix}(G,X)$ to denote the set 
  \begin{displaymath}
  \{x\in X:\exists\,\gamma\in G\backslash\{1_G\}, \textrm{ s.t. } \gamma(x)=x\}.
  \end{displaymath}
  For a periodic map $f\in{\rm Aut}(\Sigma_g)$, if ${\rm Fix}(\langle f \rangle,\Sigma_g)$ has dimension 1, then $f$ must be orientation-reversing; if ${\rm fix}(f)$ has dimension 1, then $f$ must be an orientation-reversing involution; and if there exists an isolated point in ${\rm fix}(f)$, then $f$ must be orientation-preserving. 
  
\subsection{Type $(-,-)$}\label{subsect:--}
  \begin{prop}\label{prop:--equivalence}
  	Suppose $f\in {\rm Aut}(\Sigma_g)$ is orientation-reversing. Then $f$ is extendable over $S^3$ in type $(-,-)$ if and only if it is extendable over $\mathbb{R}^3$.
  \end{prop}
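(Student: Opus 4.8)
\emph{Overview and the easy direction.} The plan is to move back and forth through a fixed point of the ambient symmetry. First, the direction ``extendable over $\mathbb{R}^3\Rightarrow$ extendable over $S^3$ in type $(-,-)$.'' Given an embedding $e:\Sigma_g\hookrightarrow\mathbb{R}^3$ and a periodic $\phi'\in{\rm Aut}(\mathbb{R}^3)$ with $e\circ f=\phi'\circ e$, I would note that in the smooth category $\phi'$ may be taken to be orthogonal, so it extends to a periodic $\hat\phi\in{\rm Aut}(S^3)$ on $S^3=\mathbb{R}^3\cup\{\infty\}$ fixing $\infty$; composing $e$ with $\mathbb{R}^3\hookrightarrow S^3$ realizes $f$ as $\hat\phi|_{\Sigma_g}$. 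It then remains only to see that $\hat\phi$ reverses the orientation of $S^3$: as $\Sigma_g$ is connected it separates $S^3$ into two components, and $\hat\phi$ fixes $\infty$, hence preserves the component containing $\infty$ and therefore the other one as well, so $\hat\phi$ preserves a transverse orientation of $\Sigma_g$. In a $\hat\phi$-invariant collar $\Sigma_g\times[-1,1]$ the map $\hat\phi$ is isotopic to $f\times{\rm id}$, and since $f$ reverses the orientation of $\Sigma_g$ the map $\hat\phi$ reverses the local, hence global, orientation of $S^3$. This is a type $(-,-)$ extension.

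\emph{The nontrivial direction: finding a fixed point.} Now suppose $f$ extends to a periodic orientation-reversing $\phi\in{\rm Aut}(S^3)$ with respect to some invariant embedding $\Sigma_g\hookrightarrow S^3$, which by geometrization I take to lie in $O(4)$ with $\det\phi=-1$. The key point is that $\phi$ must have a nonempty fixed point set: by the Lefschetz fixed point theorem $L(\phi)=1-\deg\phi=2\neq0$, or directly, writing $W\subset\mathbb{R}^4$ for the $(+1)$-eigenspace, the product of the real eigenvalues of $\phi$ equals $-1$, forcing $W\neq0$. I would then pin down the shape of ${\rm fix}(\phi)=S^3\cap W$: if $\dim W=2$ then $\phi|_{W^\perp}\in O(2)$ has determinant $-1$ and hence a $(+1)$-eigenvector outside $W$, a contradiction; and $\dim W=4$ gives $\phi={\rm id}$. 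So $\dim W\in\{1,3\}$, i.e.\ ${\rm fix}(\phi)$ is either a pair of points $S^0$, or a great $2$-sphere $S^2$ with $\phi$ the reflection across it.

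\emph{Pushing off a ball.} Next I would show ${\rm fix}(\phi)\not\subseteq\Sigma_g$, using ${\rm fix}(f)=\Sigma_g\cap{\rm fix}(\phi)$. If ${\rm fix}(\phi)=S^0$ met $\Sigma_g$, that point would be an isolated fixed point of $f$, forcing $f$ orientation-preserving, contrary to hypothesis; so in this case ${\rm fix}(\phi)\cap\Sigma_g=\emptyset$. If the great sphere ${\rm fix}(\phi)=S^2$ were contained in $\Sigma_g$, it would be a closed (hence, by the smooth structure, also open) subsurface of the connected surface $\Sigma_g$, giving $\Sigma_g=S^2$, against $g\geq1$. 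Either way there is a point $p\in{\rm fix}(\phi)\setminus\Sigma_g$. Since $\phi$ is an orthogonal isometry fixing $p$, a small metric ball $B$ about $p$ is $\phi$-invariant, and for $B$ small enough $\overline B\cap\Sigma_g=\emptyset$. Then $S^3\setminus\overline B$ is an open $3$-ball, $\phi$-invariant, containing $\Sigma_g$, with $\phi|_{S^3\setminus\overline B}$ periodic; identifying it with $\mathbb{R}^3$ exhibits $f$ as extendable over $\mathbb{R}^3$.

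\emph{Expected main obstacle.} The steps above are mostly soft once one adopts the guiding principle that type $(-,-)$ extendability over $S^3$ is exactly ``the $S^3$-symmetry can be isotoped to miss a ball''. The point requiring the most care is the reflection case ${\rm fix}(\phi)=S^2$: one must genuinely rule out $S^2\subseteq\Sigma_g$ and argue, in the smooth category, that a closed $2$-submanifold contained in a connected surface must exhaust it. I expect that, and the bookkeeping of transverse orientations in the collar argument of the first paragraph, to be the only places where more than a one-line justification is needed.
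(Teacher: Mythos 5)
Your proof is correct, and in the crucial subcase it takes a genuinely different route from the paper. Your skeleton for the hard direction agrees with the paper's: put the torsion $\phi\in O(4)$ with $\det\phi=-1$ in normal form, find a fixed point of $\phi$ off the surface, and delete it (or a small invariant ball around it) to land in $\mathbb{R}^3$. The divergence is in what happens when the obvious fixed point lies on $\Sigma_g$. The paper identifies $S^3$ with $\mathbb{R}^3\cup\{\infty\}$ so that $\phi$ fixes $0$ and $\infty$; if $\infty\in\Sigma_g$ it argues that $f$ must then have one-dimensional fixed set, hence is an involution with $\phi$ the reflection ${\rm diag}(1,1,-1)$, so $|\Sigma_g/f|$ is orientable with boundary, and it concludes by citing Proposition 1 of \cite{C2} for extendability over $\mathbb{R}^3$. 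You instead classify ${\rm fix}(\phi)$ as $S^0$ or a great $S^2$, rule out ${\rm fix}(\phi)\cap\Sigma_g\neq\emptyset$ in the $S^0$ case (no isolated fixed points for an orientation-reversing $f$) and ${\rm fix}(\phi)\subseteq\Sigma_g$ in the $S^2$ case (it would force $\Sigma_g=S^2$, excluded by the section's standing assumption $g\geq 1$; one could also note it would force $f={\rm id}$ on $\Sigma_g$, which is not orientation-reversing), and then puncture at a fixed point missing $\Sigma_g$. This is more self-contained: it avoids importing Costa's realization theorem, at the cost of a little eigenvalue bookkeeping that the paper gets for free from the explicit matrix. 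You also spell out the converse direction, which the paper leaves implicit; your argument there (the compactified symmetry fixes $\infty$, hence preserves both complementary components and the transverse orientation, so it must reverse the ambient orientation since $f$ reverses the tangential one) is the right one, though ``isotopic to $f\times{\rm id}$'' in the collar should really just be the pointwise statement that $\det(d\hat\phi)$ at a point of $\Sigma_g$ is the product of $\det(df)$ with the sign of the normal action.
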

  
  \begin{proof}
  	Suppose $f$ extends to some orientation-reversing automorphism $\phi$ of $S^3$, with respect to some embedding $\Sigma_g\hookrightarrow S^3$. $\phi\in O(4)$ has the standard form up to similarity:
  	\begin{displaymath}
  	\left(
  	\begin{array}{cccc}
  	\cos\frac{2p\pi}{n} & -\sin\frac{2p\pi}{n} & & \\
  	\sin\frac{2p\pi}{n} & \cos\frac{2p\pi}{n} & & \\
  	& & 1 &  \\
  	& & & -1
  	\end{array}
  	\right),
  	\gcd(n,p)=1\,\text{or}\,2.
  	\end{displaymath}
  	Identify $S^3$ with $\mathbb{R}^3\cup\{\infty\}$, then $\phi$ corresponds to the matrix 
  	\begin{displaymath}
  	\left(
  	\begin{array}{ccc}
  	\cos\frac{2p\pi}{n} & -\sin\frac{2p\pi}{n} & \\
  	\sin\frac{2p\pi}{n} & \cos\frac{2p\pi}{n} & \\
  	& & -1  
  	\end{array}
  	\right),
  	\end{displaymath}
  	which fixes two points $0,\infty$. If $\infty\notin \Sigma_g$, $f$ is naturally extendable over $\mathbb{R}^3$. Otherwise $\infty\in{\rm fix}(f)$, then $f$ has a 1-dimensional fixed point set, for an orientation-reversing map has no isolated fixed point. It implies $f$ is an involution and $\phi={\rm diag}(1,1,-1)$. So $|\Sigma_g/f|$ is homeomorphic to the closure of a fundamental domain, which is an orientable surface with non-empty boundary. According to Proposition 1 in \cite{C2}, $f$ is extendable over $\mathbb{R}^3$ as well.
  \end{proof}
  
  The extendability over $\mathbb{R}^3$ of an orientation-reversing map has been discussed by Costa \cite{C2,C3}. So Theorem \ref{thm:--} is essentially the same with his conclusions. We omit the details in the calculation here.
  
\subsection{Type $(+,-)$}\label{subsect:+-}
  $\phi$ has the same matrix as in the proof of Proposition \ref{prop:--equivalence} with $n$ even. For the orientation reason, $\phi$ exchanges the two components of $S^3-\Sigma_g$, so the fixed points $0,\infty$ must be on $\Sigma_g$. Note that ${\rm Fix}(\langle f \rangle,\Sigma_g)$ is discrete and $|\Sigma_g/f|$ is a closed orientable surface. 
  If $\gcd(n,p)=2$, $n/2$ must be odd as $\phi$ is of order $n$. Then ${\rm fix}(\phi^\frac{n}{2})$ is the sphere $\{\infty\}\cup xy$-plane. This implies ${\rm fix}(f^\frac{n}{2})=\Sigma_g\cap{\rm fix}(\phi^\frac{n}{2})$ has dimension 1 and $f^{\frac{n}{2}}$ is orientation-reversing, a contradiction.
  So $\gcd(n,p)=1$. Without loss of generality, we assume $p=1$, for otherwise we can consider $f^k$ instead of $f$, where $k\in\mathbb{Z}$ satisfies $kp\equiv 1\,({\rm mod}\,n)$. Then $\phi$ is the composition of a $\frac{2\pi}{n}$-rotation around the $z$-axis and the reflection across the $xy$-plane. 
  
  If $n=2$, ${\rm Fix}(\langle\phi\rangle,S^3)=\{0,\infty\}$. Hence $\Sigma_g/f$ has only two singular points, and their corresponding elements $\xi_1,\xi_2\in\pi_1(\Sigma_g/f)$ are sent to the generator of $\mathbb{Z}_2$ by the epimorphism $\psi:\pi_1(\Sigma_g/f)\to\mathbb{Z}_2$.
  
  If $n>2$, $\Sigma_g$ intersects the circle $\{\infty\}\cup z$-axis at $2s-2$ points: $0,\infty,(0,0,\pm z_i)(1\leq i\leq s-2)$, where $0<z_1<z_2<\cdots<z_{s-2}$. Their images in $\Sigma_g/f$ are the singular points. Let $0,\infty,(0,0,z_i)$ correspond to $\xi_1,\xi_2,\xi_{i+2}\in\pi_1(\Sigma_g/f)$ respectively. Then $\psi(\xi_1)=1$, and $\psi(\xi_i)=(-1)^{i}\times 2$ for $3\leq i\leq s$. $\psi(\xi_2)$ is determined by
  \begin{displaymath}
  0=\psi(\prod_{i=1}^{h}[\alpha_i,\beta_i]\prod_{k=1}^{s}\xi_k)=\sum_{k=1}^{s}\psi(\xi_k),
  \end{displaymath}
  so equals $-1$ if $s$ is even and otherwise $1$. 
  
\subsection{Type $(-,+)$}\label{subsect:-+}
  As $\phi\in O(4)$ is orientation-preserving, it has the standard form 
  \begin{displaymath}
  \left(
  \begin{array}{cccc}
  \cos\frac{2m_1\pi}{n} & -\sin\frac{2m_1\pi}{n} & & \\
  \sin\frac{2m_1\pi}{n} & \cos\frac{2m_1\pi}{n} & & \\
  & & \cos\frac{2m_2\pi}{n} & -\sin\frac{2m_2\pi}{n} \\
  & & \sin\frac{2m_2\pi}{n} & \cos\frac{2m_2\pi}{n}
  \end{array}
  \right),
  \gcd(n,m_1,m_2)=1,
  \end{displaymath}
  up to similarity. 
  Let $p={\rm gcd}(n,m_1),q={\rm gcd}(n,m_2)$, then $p,q$ are coprime as ${\rm gcd}(n,m_1,m_2)=1$. Without loss of generality, we set $p$ to be odd.
  Moreover, we assume $m_1=p$, for otherwise we can consider $\phi^k,f^k$ instead of $\phi,f$, where $k$ satisfies ${\rm gcd}(k,n)=1$ and $km_1\equiv p\,({\rm mod}\,n)$ (Lemma \ref{lem:number-theory} (2)). Assume $n=pql$ and $m_2=mq$.
  Use the model 
  \begin{displaymath}
  S^3=\{(w_1,w_2)\in\mathbb{C}^2:|w_1|^2+|w_2|^2=1\},
  \end{displaymath}
  then $\phi$ is the map 
  \begin{displaymath}
  (w_1,w_2)\mapsto (w_1e^{2\pi i\frac{1}{ql}},w_2e^{2\pi i\frac{m}{pl}}).
  \end{displaymath}
  The $\phi$-action on $S^3$ has a fundamental domain $F$ as in Figure \ref{fig:fundamental-domain}, and $|S^3/\phi|$ is homeomorphic to a lens space $L(l,m)$.  

  If $|\Sigma_g/f|$ is orientable with non-empty boundary, 
  then the boundary comes from the one dimensional part of ${\rm fix}(f^{\frac{n}{2}})\subseteq{\rm fix}(\phi^{\frac{n}{2}})$.
  As
  \begin{displaymath}
  \phi^{\frac{n}{2}}(w_1,w_2)=(w_1e^{2\pi i\frac{p}{2}},w_2e^{2\pi i\frac{mq}{2}})=(w_1e^{2\pi i\frac{1}{2}},w_2e^{2\pi i\frac{mq}{2}}),
  \end{displaymath}
  ${\rm fix}(\phi^{\frac{n}{2}})$ can only be the axis $Z$. So $Z\subset\Sigma_g$ and $\partial |\Sigma_g/f|$ is connected. With the fundamental domain $F$ we see $q=2$, for otherwise $\Sigma_g$ can not be a closed surface. Moreover, by Proposition \ref{prop:core} (1) we have $l=1$. So $\epsilon_1\in\pi_1(\Sigma_g/f)$, the element represented by the oriented boundary $\rho(Z)$, is sent to $2\alpha$ by $\psi$, where $\alpha$ is a generator of $\mathbb{Z}_n$. 
  If $n>2$, the sigular points come from $\Sigma_g\cap S^1$, so $\psi(\xi_i)=\pm 2\alpha$. The signs of $\psi(\xi_1),\cdots,\psi(\xi_s)$ must be alternating, as $|\Sigma_g/f|$ is orientable. From the relation
  \begin{displaymath}
  \psi(\epsilon_1)+\psi(\xi_1)+\cdots+\psi(\xi_s)=0\in\mathbb{Z}_n,
  \end{displaymath}
  we see $s$ is odd and $\psi(\xi_1),\cdots,\psi(\xi_s)$ are $-2\alpha,2\alpha,-2\alpha,2\alpha,\cdots,-2\alpha$.
  
  If $|\Sigma_g/f|$ is non-orientable with non-empty boundary, similarly we have $\rho(Z)=\partial |\Sigma_g/f|$, $q=2$ and $\psi(\epsilon_1)=\pm 2\alpha$, $\langle\alpha\rangle=\mathbb{Z}_n$. Moreover, if $p>1$, $s$ is odd and $\pm\psi(\xi_i)=\pm 2l\alpha$ for each $i=1,2,\cdots,s$.
  
  Finally we suppose $|\Sigma_g/f|$ is a closed (thus non-orientable) surface of genus $h$, embedded in the lens space $L(l,m)$. According to \cite{BW} (see Lemmas \ref{lem:parity}, \ref{lem:closed-surface}), $l$ is even and $l/2,h$ have the same parity. Moreover, if $h=1$, $l$ must be $2$. The circles $\rho(S^1),\rho(Z)\subset S^3/\phi$ have indices $p,q$ respectively. Suppose in $\Sigma_g\cap F$ there are $t$ points on $\rho(S^1)$ and $s-t$ points on $\rho(Z)$. They are the singular points of indices $p,q$ respectively (though they degenerate when $p=1$ or $q=1$).  
  So the isotropy invariant is 
  \begin{displaymath}
  (\underbrace{\pm\beta,\cdots,\pm\beta}_{t},\underbrace{\pm\gamma,\cdots,\pm\gamma}_{s-t}),
  \end{displaymath} 
  where $\beta,\gamma\in\mathbb{Z}_{n}$ are of orders $p,q$ respectively. Moreover, $t,s-t$ should be odd according to Lemma \ref{lem:intersections}.

  When $n/2$ is even and $2\notin\{p,q\}$, $h_1(f)$ is defined. Let 
  \begin{displaymath}
  \phi_l:(w_1,w_2)\mapsto (w_1e^{2\pi i\frac{m}{l}},w_2e^{2\pi i\frac{1}{l}})
  \end{displaymath}
  be the automorphism of $|S^3/\phi^l|\cong S^3$ induced by $\phi$, and $f_l$ be induced on $|\Sigma_g/f^l|$, with the corresponding epimorphism $\psi_l$. 
  As ${\rm Fix}(\langle f_l\rangle,|\Sigma_g/f^l|)\subseteq{\rm Fix}(\langle\phi_l\rangle,S^3)$ and ${\rm Fix}(\langle\phi_l\rangle,S^3)$ is empty, the map 
  \begin{displaymath}
  (S^3,|\Sigma_g/f^l|)\xrightarrow{(\phi_l,f_l)}(L(l,m),|\Sigma_g/f|)
  \end{displaymath}
  is a covering, which induces a commutative diagram: 
  \begin{displaymath}
  	\xymatrix{
  1\ar[r] & \pi_1(|\Sigma_g/f^l|)\ar[r] & \pi_1(|\Sigma_g/f|)\ar[r]^-{\psi_l}\ar[d]_{i_*} & \mathbb{Z}_l\ar[d]^{f_l\leftrightarrow 1\leftrightarrow \phi_l}_{=}\ar[r] & 1\\
  1\ar[r] & \pi_1(S^3)\ar[r] & \pi_1(L(l,m))\ar[r]^-{\cong} & \mathbb{Z}_l\ar[r] & 1.
  }
  \end{displaymath}
  By Proposition \ref{prop:torsion}, $i_*(\prod\limits_{i=1}^{h}\delta_i)$ is non-trivial and represents the order-$2$ element in $H_1(L(l,m);\mathbb{Z})\cong\pi_1(L(l,m))$.
  Hence by Lemma \ref{lem:modulo},
  \begin{displaymath}
  \psi(\prod_{i=1}^{h}\delta_i)\equiv\psi_l(\prod_{i=1}^{h}\delta_i)\equiv l/2\,({\rm mod}\,l).
  \end{displaymath}
  So by definition we have $h_1(f)\equiv l/2\,({\rm mod}\,l)$.
  
  When $h=2$, by Lemma \ref{lem:closed-surface} we have $l\equiv 0\,({\rm mod}\,4)$ and $m\equiv l/2\pm 1\,({\rm mod}\,l)$. 
  Suppose $m\equiv l/2+ 1\,({\rm mod}\,l)$. 
  Take the smallest $m_0\in\mathbb{N}$ such that
  \begin{displaymath}
  \left\{\begin{array}{l}
  m_0\equiv l/2+1 \,({\rm mod}\,l),\\
  {\rm gcd}(m_0,p)=1.
  \end{array}
  \right.
  \end{displaymath}
  Choose $\kappa\in\mathbb{N}$ that satisfies the congruence equations
  \begin{displaymath}
  \left\{
  \begin{array}{clc}
  \kappa\equiv 1 &({\rm mod}\,ql),\\
  \kappa m\equiv m_0 &({\rm mod}\,pl).
  \end{array} 
  \right.
  \end{displaymath}
  With Lemma \ref{lem:number-theory} (3) and a few basic calculations, such $\kappa$ exists and is coprime to $n$. Take $\kappa'$ with $\kappa\kappa'\equiv 1\,({\rm mod}\,n)$. By replacing $\phi,f$ by $\phi^{\kappa'},f^{\kappa'}$, we may assume $m=m_0$. Note that in Example \ref{ex:-+Klein} the embedding of surface has nothing to do with the computation of the invariants. Therefore, we can follow it directly and see that $f$ must have the same isotropy invariant, $h_1,h_2$ as $f'$ in the example. So $\langle f\rangle$ is conjugate to $\langle f_0\rangle$. 
  If $m\equiv l/2-1\,({\rm mod}\,l)$, 
  as the quotient space $|S^3/\phi|\cong L(l,l/2-1)\cong L(l,l/2+1)$, 
  it must be conjugate to the last case, so we finish the proof.
  
  \section{Surfaces in lens spaces}\label{sect:surfaces-in-lens-space}
  In this section, we present some facts about embedded surfaces in lens spaces. 
  
  A lens space $L(l,m)$ (${\rm gcd}(l,m)=1$) can be constructed by gluing two solid tori $V_1=D^2\times S^1,V_2=S^1\times D^2$ with a homeomorphism $\omega:\partial V_2\to \partial V_1$ such that its restriction on the meridian $\{1\}\times \partial D^2$ of $V_2$ is
  \begin{align*}
  \{1\}\times S^1 & \to S^1\times S^1=\partial V_1\\
  (1,e^{2\pi it}) & \mapsto (e^{2\pi itm},e^{2\pi itl}). 
  \end{align*}
  Cutting $V_1$ along the disk $D^2\times\{1\}$, we obtain a cylinder $C=D^2\times I$. $\partial C$ consists of two disks $D^2\times\{0\},D^2\times\{1\}$ and an annulus $\partial D^2\times I=S^1\times I$, denoted by $D_0,D_1,A$ respectively.
  
  \begin{rem}
  	To some degree, $V_1,V_2\subset L(l,m)$ are unique. In fact, for a given lens space, its Heegaard splitting of a fixed genus is unique up to isotopy \cite{BO}.
  \end{rem}
  
  A simple closed curve in a lens space is a \emph{core} if its complement is homeomorphic to a solid torus.
  
  \begin{prop}\label{prop:core}
  	Let $\gamma$ be a core curve of a lens space $L(l,m)$ with ${\rm gcd}(l,m)=1$.
  	
  	(1) If $l>1$, $\gamma$ can not bound an embedded orientable surface in $L(l,m)$.
  	
  	(2) If $l$ is even, $\gamma$ can neither bound an embedded non-orientable surface in $L(l,m)$.
  	
  	(3) If $l$ is odd, $\gamma$ bounds an embedded non-orientable surface in $L(l,m)$. Moreover, if $m=2$, $\gamma$ bounds a M\"obius band; and if $m=4$, $\gamma$ bounds a once-holed Klein bottle.
  \end{prop}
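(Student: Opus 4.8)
I would treat (1) and (2) by homological obstructions and (3) by an explicit construction in a Heegaard splitting. Write $L(l,m)=N(\gamma)\cup_T W$, where $N(\gamma)$ is a tubular neighbourhood of $\gamma$, $W=\overline{L(l,m)\setminus N(\gamma)}$ is a solid torus (this uses only that $\gamma$ is a core), and $T=\partial N(\gamma)=\partial W$, with meridian–longitude basis $\mu_\gamma,\lambda_\gamma$ of $H_1(T)$. In $H_1(W)\cong\mathbb{Z}$ the classes $[\mu_\gamma]$ and $[\lambda_\gamma]$ span $\mathbb{Z}$, and a Mayer–Vietoris computation identifies $H_1(L(l,m))\cong\mathbb{Z}/|[\mu_\gamma]|$; since this group is $\mathbb{Z}/l$ we get $[\mu_\gamma]=\pm l$ and $\gcd([\lambda_\gamma],l)=1$, so $[\gamma]=[\lambda_\gamma]$ generates $H_1(L(l,m);\mathbb{Z})\cong\mathbb{Z}/l$. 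Hence if $\gamma$ bounds an embedded orientable surface then $\gamma$ is null-homologous over $\mathbb{Z}$, impossible when $l>1$, which gives (1); reducing mod $2$, $[\gamma]$ still generates $H_1(L(l,m);\mathbb{Z}/2)\cong\mathbb{Z}/\gcd(l,2)$, so if $\gamma$ bounds any embedded surface (orientable or not) it is null-homologous over $\mathbb{Z}/2$, impossible when $l$ is even, which gives (2).

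For the existence part of (3), assume $l$ is odd. An embedded surface $S$ with $\partial S=\gamma$ meets $N(\gamma)$ in an annulus $A$ running from the core $\gamma$ to a longitude $\delta\subset T$, and $S'=S\cap W$ is a properly embedded surface in $W$ with $\partial S'=\delta$; conversely, any such pair $(A,S')$ reassembles to a surface bounding $\gamma$, with $S$ non-orientable as soon as $S'$ is. Two facts then suffice. First, the framing of $\delta$ is unconstrained: for each $k\in\mathbb{Z}$ there is an embedded ``spiralling'' annulus $A_k\subset N(\gamma)$ from $\gamma$ to the simple closed curve $\delta_k:=\lambda_\gamma+k\mu_\gamma$. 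Since $[\mu_\gamma]=\pm l$ and $\gcd([\lambda_\gamma],l)=1$ in $H_1(W)\cong\mathbb{Z}$, as $k$ varies $[\delta_k]$ runs over a full residue class modulo $l$ consisting of integers coprime to $l$; because $l$ is odd this class contains an integer that is even and (as $l>1$) nonzero, so we fix such a $k$. Second, it is classical (see e.g.\ \cite{BW}) that a simple closed curve on the boundary of a solid torus which is null-homologous over $\mathbb{Z}/2$—equivalently, has even longitudinal winding—bounds a properly embedded \emph{non-orientable} surface in that solid torus; applying this to $\delta_k\subset\partial W$ produces $S'$, and $S=S'\cup_{\delta_k}A_k$ is the desired non-orientable surface with $\partial S=\gamma$. (For $l=1$ the claim is just that an unknot in $S^3$ bounds a non-orientable surface, which is immediate.)

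For the ``moreover'' clauses one keeps track of $[\delta_k]$ exactly. With $\gamma$ taken to be the appropriate Heegaard core, the computation above lets us arrange $[\delta_k]=\pm 2$ when $m=2$ and $[\delta_k]=\pm 4$ when $m=4$. A Möbius band in $W$ whose core is isotopic to the core of $W$, with a suitable odd number of half-twists, has boundary precisely a curve in the first isotopy class; a once-holed Klein bottle, realised as the boundary connected sum of two such Möbius bands, realises the second. Gluing on $A_k$ then produces the Möbius band, respectively the once-holed Klein bottle, bounded by $\gamma$.

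The homological steps are routine, so the real content of (3) is the solid-torus statement that an even-winding boundary curve bounds an embedded non-orientable surface; I would either quote it from \cite{BW} or prove it via the boundary-connected-sum-of-Möbius-bands construction, which simultaneously makes the ``moreover'' clauses transparent. The one place that needs care is the bookkeeping for the Heegaard gluing: one must confirm $[\mu_\gamma]=\pm l$ in $H_1(W)$ for the framing argument (done above via Mayer–Vietoris), and must get the right winding numbers—which come out as $m$ or its inverse modulo $l$ according to which of the two Heegaard solid tori has $\gamma$ as its core—for the $m=2$ and $m=4$ constructions.
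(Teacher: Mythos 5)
Your proofs of (1) and (2) are the same as the paper's: an embedded orientable (resp.\ arbitrary) surface with boundary $\gamma$ forces $[\gamma]=0$ in $H_1(L(l,m);\mathbb{Z})$ (resp.\ in $H_1(L(l,m);\mathbb{Z}_2)$), while a core generates $H_1\cong\mathbb{Z}_l$; your Mayer--Vietoris computation merely makes that last fact explicit. For (3) you take a genuinely different route. The paper's proof consists of pointing at the quotient pairs $(|S^3/\phi|,|\Sigma/\phi|)$ of Examples \ref{ex:-+_3} (1) and \ref{ex:-+_4} (1), which exhibit a M\"obius band bounded by a core in $L(l,2)$ and a once-holed Klein bottle bounded by a core in $L(l,4)$. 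You instead decompose $L(l,m)=N(\gamma)\cup_T W$, spiral $\gamma$ out to $\delta_k=\lambda_\gamma+k\mu_\gamma\subset T$, and span $\delta_k$ in the solid torus $W$, using the Bredon--Wood fact that a boundary curve of nonzero even longitudinal winding bounds a properly embedded surface there (automatically non-orientable, since a curve that is nonzero in $H_1(W;\mathbb{Z})$ cannot bound an orientable one). This is more self-contained, and it proves the first sentence of (3) for every $m$ coprime to an odd $l$, whereas the two cited examples only directly realize the lens spaces $L(l,2)$ and $L(l,4)$; that is a real gain.

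The one caveat---which you flag yourself, and which the paper's proof shares---concerns the ``moreover'' clauses. A lens space has two isotopy classes of cores, with homology classes $\pm m$ and $\pm m^{-1}$ in $\mathbb{Z}_l$ relative to a fixed generator, and your bookkeeping produces $[\delta_k]=\pm 2$ (resp.\ $\pm 4$) only for one of them. This is not merely a limitation of the method: a M\"obius band properly embedded in a solid torus must have boundary of longitudinal winding exactly $\pm 2$ (otherwise capping it with a meridian disk of a complementary solid torus would put a projective plane in some $L(w,q)$ with $|w|>2$, contradicting Lemma \ref{lem:closed-surface} (1)), so for instance the $V_2$-core of $L(7,2)$ bounds no M\"obius band at all. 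Hence ``the appropriate Heegaard core'' is doing real work, and the statement as literally quantified over all cores needs that qualifier; but since the paper's examples likewise produce one specific core, your proof establishes exactly what the paper's does.
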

  
  \begin{proof}
  	(1) If $\gamma$ bounds an embedded orientable surface then it must be trivial in $H_1(L(l,m);\mathbb{Z})\cong\pi_1(L(l,m))$, a contradiction. 
  	
  	(2) If $\gamma$ bounds an embedded non-orientable surface then it must be trivial in $H_1(L(l,m);\mathbb{Z}_2)\cong\mathbb{Z}_2$, a contradiction.
  	
  	(3) It suffices to consider the quotient space pair $(|S^3/\phi|,|\Sigma/\phi|)$ in Examples \ref{ex:-+_3} (1) and \ref{ex:-+_4} (1).
  \end{proof}
  
  \begin{lem}[ll.24-28 in p.97 in \cite{BW}] \label{lem:intersections}
  	Suppose a closed surface $\Sigma$ is embedded in a lens space, and $\Sigma$ intersects a core curve transversely. Then $\Sigma$ is non-orientable if and only if the number of their intersection points is odd.
  \end{lem}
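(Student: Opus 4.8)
The plan is to recast the statement in terms of $\mathbb{Z}_2$-intersection theory. Write $L$ for the lens space and let $\gamma$ be a core curve, with solid-torus complement $W=L\setminus\mathrm{int}\,N(\gamma)$. Two homological facts set the stage. First, the decomposition $L=N(\gamma)\cup_{T}W$ is a genus-one Heegaard splitting in which $\gamma$ is a core, and the core of a Heegaard solid torus generates the first homology of such a manifold; a van Kampen (or Mayer--Vietoris) computation confirms that $[\gamma]$ generates $H_1(L;\mathbb{Z})\cong\mathbb{Z}_l$, so its mod-$2$ reduction generates $H_1(L;\mathbb{Z}_2)$. Second, by the universal coefficient theorem $H_1(L;\mathbb{Z}_2)\cong H_2(L;\mathbb{Z}_2)\cong\mathbb{Z}_{\gcd(l,2)}$, and when $l$ is even the $\mathbb{Z}_2$-intersection pairing $H_2(L;\mathbb{Z}_2)\times H_1(L;\mathbb{Z}_2)\to\mathbb{Z}_2$ is non-degenerate by Poincar\'e duality.

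The implication ``$\Sigma$ orientable $\Rightarrow$ even number of intersection points'' is then immediate: an orientable closed surface carries an integral fundamental class, and $[\Sigma]\in H_2(L;\mathbb{Z})=0$ vanishes; hence the \emph{signed} count of points of $\Sigma\cap\gamma$ equals the pairing of $0$ with $[\gamma]$, i.e. is zero, so the unsigned count is even.

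For the converse I would use the standard dictionary between (non)orientability and one/two-sidedness in an orientable ambient manifold: since $TL|_\Sigma\cong T\Sigma\oplus\nu_\Sigma$ and $w_1(TL)=0$, one has $w_1(\nu_\Sigma)=w_1(T\Sigma)$, so $\Sigma$ is non-orientable iff its normal bundle is non-trivial iff $\Sigma$ is one-sided. The key input is the self-intersection formula: for the inclusion $i\colon\Sigma\hookrightarrow L$, the restriction $i^{*}\mathrm{PD}[\Sigma]\in H^{1}(\Sigma;\mathbb{Z}_2)$ of the $\mathbb{Z}_2$-Poincar\'e dual equals $w_1(\nu_\Sigma)$. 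Thus if $\Sigma$ is non-orientable then $i^{*}\mathrm{PD}[\Sigma]\neq0$, so $\mathrm{PD}[\Sigma]\neq0$, so $[\Sigma]\neq0$ in $H_2(L;\mathbb{Z}_2)$; in particular $\gcd(l,2)=2$ and $[\Sigma]$ is the non-zero class of $H_2(L;\mathbb{Z}_2)\cong\mathbb{Z}_2$. Since $[\gamma]$ is the non-zero class of $H_1(L;\mathbb{Z}_2)\cong\mathbb{Z}_2$ and the pairing is non-degenerate, $[\Sigma]\cdot[\gamma]=1$; that is, $\#(\Sigma\cap\gamma)$ is odd. Combining the two implications yields the equivalence. (Here $\Sigma$ is taken connected: a disjoint union of two one-sided surfaces would otherwise be a counterexample.)

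A more elementary argument for the hard direction avoids Poincar\'e duality on $L$. Push $\Sigma$ off $\gamma$ so that, for a thin $N(\gamma)$, the set $\Sigma\cap N(\gamma)$ is a union of $k=\#(\Sigma\cap\gamma)$ meridian disks, and put $\Sigma_0=\Sigma\cap W$: a non-orientable -- hence, as above, one-sided -- properly embedded surface in the solid torus $W$ whose boundary is $k$ parallel copies of the meridian $\mu$ of $N(\gamma)$. By the relative self-intersection formula a one-sided properly embedded surface represents the non-zero element of $H_2(W,\partial W;\mathbb{Z}_2)\cong\mathbb{Z}_2$, so under $\partial\colon H_2(W,\partial W;\mathbb{Z}_2)\to H_1(\partial W;\mathbb{Z}_2)$ the class $k[\mu]$ must equal that of a meridian of $W$; writing the latter as $p[\mu]+l[\lambda]$ with $\gcd(p,l)=1$ and noting that $l$ is even (so $p$ is odd), comparison of the $[\mu]$-coefficients gives $k\equiv p\equiv1\pmod2$. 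I expect the only genuinely delicate ingredients in either route are the self-intersection formula identifying $i^{*}\mathrm{PD}[\Sigma]$ with $w_1(\nu_\Sigma)$, and the bookkeeping that it is really the \emph{core} hypothesis on $\gamma$ (not merely ``$\gamma$ is a loop'') that makes $[\gamma]$ a generator of $H_1$; everything else is routine.
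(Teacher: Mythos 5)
Your main (cohomological) argument is correct and complete: the paper itself offers no proof of Lemma \ref{lem:intersections}, importing it verbatim from Bredon--Wood, and your mod-$2$ intersection computation is essentially the justification given at the cited place in \cite{BW}. The two halves fit together properly: orientable $\Rightarrow$ $[\Sigma]=0$ in $H_2(L;\mathbb{Z})=0$, hence even signed (and unsigned) count; non-orientable $\Rightarrow$ $w_1(\nu_\Sigma)=i^{*}\mathrm{PD}[\Sigma]\neq 0$, hence $[\Sigma]$ is the generator of $H_2(L;\mathbb{Z}_2)\cong\mathbb{Z}_2$, which pairs to $1$ with the generator $[\gamma]$ of $H_1(L;\mathbb{Z}_2)$. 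Your connectedness caveat is also the right one to make, and is harmless here since the surfaces in the paper are connected.

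Two small soft spots, both in your sketched ``elementary'' alternative rather than in the main proof. First, you invoke ``$l$ is even'' without deriving it inside that route; in the paper's logic this is exactly Lemma \ref{lem:parity} (again from \cite{BW}), or it falls out of your own first route, but as written the second argument quietly borrows it. (One can also extract it internally: the boundary of the nonzero class of $H_2(W,\partial W;\mathbb{Z}_2)$ is $m[\mu]+l[\lambda]$ mod $2$, and since $\partial[\Sigma_0]$ is a multiple of $[\mu]$, the $[\lambda]$-coefficient forces $l$ even.) Second, ``a one-sided properly embedded surface represents the non-zero element of $H_2(W,\partial W;\mathbb{Z}_2)$'' needs care because $\Sigma_0=\Sigma\cap W$ may be disconnected, and one-sided components could a priori cancel in homology; the clean fix is to note that $H_2(L;\mathbb{Z}_2)\to H_2(L,N(\gamma);\mathbb{Z}_2)\cong H_2(W,\partial W;\mathbb{Z}_2)$ is injective (since $H_2(N(\gamma);\mathbb{Z}_2)=0$) and to transport the nonvanishing of $[\Sigma]$ from the first route. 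Neither issue affects the validity of your primary proof.
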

  
  \begin{lem}[ll.11-24 in p.88 in \cite{BW}]\label{lem:parity}
  	Suppose a lens space $L(l,m)$ $({\rm gcd}(l,m)=1)$ admits an embedded non-orientable closed surface of genus $h$, then $l$ is even and $l/2$ has the same parity with $h$.
  \end{lem}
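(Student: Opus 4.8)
The statement is exactly the theorem of Bredon and Wood \cite{BW}, so one may simply cite it; here is the argument I would give to make the paper self-contained. First I would observe that an embedded non-orientable closed surface $\Sigma$ in the orientable manifold $L(l,m)$ is necessarily one-sided, since from $TL(l,m)|_\Sigma=T\Sigma\oplus\nu_\Sigma$ and orientability of $L(l,m)$ we get $w_1(\nu_\Sigma)=w_1(T\Sigma)\neq 0$. A loop that enters a twisted $I$-bundle neighbourhood $N(\Sigma)$, runs once around an orientation-reversing loop of $\Sigma$, and then closes up must cross $\Sigma$ an odd number of times, so it pairs non-trivially with the mod-$2$ class $[\Sigma]\in H_2(L(l,m);\mathbb{Z}_2)$; hence $[\Sigma]\neq 0$. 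Since $H_2(L(l,m);\mathbb{Z}_2)\cong\mathrm{Hom}(\mathbb{Z}_l,\mathbb{Z}_2)$, which is $\mathbb{Z}_2$ when $l$ is even and $0$ when $l$ is odd, this already forces $l$ to be even and makes $[\Sigma]$ the generator of $H_2(L(l,m);\mathbb{Z}_2)\cong\mathbb{Z}_2$. (This is consistent with Lemma \ref{lem:intersections}, which records precisely that $[\Sigma]$ pairs non-trivially with a core curve.)

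For the parity statement, the plan is to compute the mod-$2$ triple self-intersection number $c=\langle\alpha^3,[L(l,m)]\rangle\in\mathbb{Z}_2$ in two ways, where $\alpha\in H^1(L(l,m);\mathbb{Z}_2)$ is Poincar\'{e} dual to $[\Sigma]$. On the surface side, the restriction of $\alpha$ to $\Sigma$ is Poincar\'{e} dual in $\Sigma$ to the self-intersection curve $\Sigma\cap\Sigma'$ (with $\Sigma'$ a normal push-off), that is, to the mod-$2$ Euler class $w_1(\nu_\Sigma)=w_1(T\Sigma)$; using $\langle\alpha^3,[L(l,m)]\rangle=\langle(i^*\alpha)^2,[\Sigma]\rangle$ one gets $c=\langle w_1(T\Sigma)^2,[\Sigma]\rangle$, and by the Wu relation $w_1^2=w_2$ on a closed surface together with $\langle w_2(T\Sigma),[\Sigma]\rangle\equiv\chi(\Sigma)\pmod 2$ this yields $c\equiv\chi(\Sigma)=2-h\equiv h\pmod 2$.

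On the ambient side, I would compute $c$ from the mod-$2$ cohomology ring of $L(l,m)$. Here $\alpha^2=\mathrm{Sq}^1\alpha$ is the image of $\alpha$ under the Bockstein of $0\to\mathbb{Z}_2\to\mathbb{Z}_4\to\mathbb{Z}_2\to 0$, which vanishes precisely when $\alpha$ lifts to $H^1(L(l,m);\mathbb{Z}_4)\cong\mathrm{Hom}(\mathbb{Z}_l,\mathbb{Z}_4)$, i.e.\ precisely when $4\mid l$. When $l\equiv 2\pmod 4$, $\alpha^2$ is therefore the non-zero element of $H^2(L(l,m);\mathbb{Z}_2)\cong\mathbb{Z}_2$, and then $\alpha^3=\alpha\cup\alpha^2$ is non-zero by non-degeneracy of the mod-$2$ cup pairing $H^1\times H^2\to H^3$. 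Thus $c\equiv l/2\pmod 2$, and comparing the two computations of $c$ gives $h\equiv l/2\pmod 2$, as claimed.

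I expect the only delicate point to be the characteristic-class bookkeeping in this double computation of $c$ — correctly identifying $i^*\alpha$ with $w_1(T\Sigma)$ and evaluating $\langle\alpha^3,[L(l,m)]\rangle$ from the cohomology ring of the lens space. Both steps are classical, and in any case the full statement is \cite{BW}, so the alternative is simply to invoke that reference as the attribution already does.
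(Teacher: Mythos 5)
Your argument is correct, and I see no gaps in it: one-sidedness forces $w_1(\nu_\Sigma)=w_1(T\Sigma)$, the push-off of an orientation-reversing loop meets $\Sigma$ once so $[\Sigma]\neq 0$ in $H_2(L(l,m);\mathbb{Z}_2)$, which kills odd $l$; and the double computation of $\langle\alpha^3,[L(l,m)]\rangle$ is sound — $i^*\alpha$ is the mod-$2$ Euler class of $\nu_\Sigma$, Wu gives $\langle w_1^2,[\Sigma]\rangle\equiv\chi(\Sigma)\equiv h\pmod 2$, while the Bockstein/cup-product structure of $H^*(L(l,m);\mathbb{Z}_2)$ gives $\alpha^3\neq 0$ exactly when $l\equiv 2\pmod 4$. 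The paper, however, does not prove this lemma at all: it is quoted verbatim from Bredon--Wood with a line reference, and the argument there (which the paper does reproduce for the closely related Proposition \ref{prop:torsion}) is of a different, geometric flavour — one isotopes $\Sigma$ into normal position with respect to the genus-one Heegaard splitting, reduces to a single meridian disk in $V_2$, and extracts the parity from an explicit count of the $l$ arcs of $\Sigma\cap A$ and an Euler-characteristic bookkeeping of the pieces glued from the cylinder $C$. Your characteristic-class route is shorter and coordinate-free, and it makes the statement genuinely self-contained; the cut-and-paste route has the advantage of producing the normal form that the paper actually needs later (in Proposition \ref{prop:torsion} and Lemma \ref{lem:unique-embedding}) to identify \emph{which} homology classes the one-sided curves of $\Sigma$ carry, information that the cohomological argument does not by itself provide.
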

  
  \begin{lem}[Theorem 6.1 in \cite{BW}]
  	\label{lem:closed-surface}
  	(1) A lens space $L(l,m)$ admits an embedded projective plane if and only if $L(l,m)\cong L(2,1)$.
  	
  	(2) $L(l,m)$ admits an embedded Klein bottle if and only if $L(l,m)\cong L(4r, 2r\pm 1)$ for some positive integer $r$.
  	
  	(3) A non-orientable closed surface of genus $3$ can be embedded into $L(4r+2,2r-1)$ for each positive integer $r$.
  \end{lem}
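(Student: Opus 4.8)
The statement has an ``only if'' (obstruction) half and an ``if'' (construction) half for (1) and (2), and asks only for a construction in (3). For the constructions I would work inside the genus-$1$ Heegaard splitting $L(l,m)=V_1\cup_T V_2$ and build the non-orientable surface as the union of one properly embedded compact piece in $V_1$ and one in $V_2$ meeting along a single essential simple closed curve on $T$, exploiting that a M\"obius band properly embedded in a solid torus has boundary a slope winding exactly twice in the longitudinal direction, and that a once-punctured Klein bottle admits an analogous model with a prescribed framing. Existence of the closed surface then becomes a congruence on the Heegaard gluing matrix, i.e.\ a divisibility condition relating $l$ and $m$. For the ``only if'' halves I would argue from a regular neighbourhood of the surface, together with the parity restriction already available in Lemma \ref{lem:parity}: a closed non-orientable surface of genus $h$ in $L(l,m)$ forces $l$ even with $l/2\equiv h\pmod 2$.

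\textbf{Part (1).} If $\mathbb{RP}^2\hookrightarrow L$ then, $L$ being orientable, a regular neighbourhood of $\mathbb{RP}^2$ is the twisted $I$-bundle over $\mathbb{RP}^2$, namely $\mathbb{RP}^3$ with an open ball removed; its complement $W$ in $L$ has $\partial W\cong S^2$, and since lens spaces are irreducible, $W$ is a $3$-ball, so $L\cong\mathbb{RP}^3=L(2,1)$. The converse is the standard embedding of $\mathbb{RP}^2$ in $\mathbb{RP}^3$. (Here Lemma \ref{lem:parity} with $h=1$ only gives $l\equiv 2\pmod 4$, so the neighbourhood argument is genuinely needed.)

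\textbf{Part (2).} First, Lemma \ref{lem:parity} with $h=2$ yields $l$ even with $l/2$ even, i.e.\ $l=4r$. Now let $K\hookrightarrow L(4r,m)$. A regular neighbourhood $N(K)$ is the orientable twisted $I$-bundle over $K$, an irreducible Seifert fibred space with torus boundary; since $\partial N(K)\to K$ is the orientation double cover, $\pi_1(\partial N(K))\hookrightarrow\pi_1(N(K))\cong\pi_1(K)$ is the index-$2$ subgroup $\mathbb{Z}^2$, so $\partial N(K)$ is incompressible in $N(K)$. Lens spaces being atoroidal, $\partial N(K)$ must compress in the complement $W$; a short prime-decomposition argument (using that $H_1(N(K))\cong\mathbb{Z}\oplus\mathbb{Z}/2$, so no Dehn filling of $N(K)$ is $S^3$) then forces $W$ to be a solid torus. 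Hence $L(4r,m)$ is a Dehn filling of the twisted $I$-bundle over $K$, and it remains to see which fillings are lens spaces: using the Seifert fibration of $N(K)$ over the disc with two cone points of order $2$, a filling is Seifert fibred over $S^2(2,2,n)$ and is therefore a prism manifold --- not a lens space --- unless the third exceptional fibre degenerates, whereupon the classification of $S^2(2,2)$-Seifert spaces leaves exactly the lens spaces $L(4r,2r\pm 1)$. For the converse I would check, in $L(4r,2r\pm1)=V_1\cup_T V_2$, that a suitable slope on $T$ simultaneously bounds a M\"obius band in $V_1$ and in $V_2$, the gluing condition making such a slope exist working out to $m\equiv 2r\pm1\pmod{4r}$; the union of the two M\"obius bands is the desired Klein bottle. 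This is exactly the configuration underlying Examples \ref{ex:-+_3} and \ref{ex:-+Klein}.

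\textbf{Part (3) and the main obstacle.} For (3) I would run the same construction with a M\"obius band in $V_1$ and a once-punctured Klein bottle in $V_2$ --- their boundary connected sum being the non-orientable closed surface of genus $3$ --- and choose the framings so that the two pieces share a boundary slope on $T$ precisely when $l=4r+2$ and $m\equiv 2r-1\pmod{4r+2}$; since $2r-1$ is odd, the required once-holed Klein bottle model is available as in Proposition \ref{prop:core}(3), and a concrete realization is Example \ref{ex:-+_4}. The step I expect to be the main obstacle is the arithmetic in the ``only if'' half of (2): determining exactly which Dehn fillings of the twisted $I$-bundle over the Klein bottle are lens spaces and pinning the parameter to $2r\pm1$, given that the homology of a filling already produces $H_1\cong\mathbb{Z}/4r$ for free and cannot distinguish lens spaces from prism manifolds. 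The constructions in (2) and (3) hide a comparable --- but routine --- bookkeeping step, namely choosing the framings of the M\"obius band and once-punctured Klein bottle so that the Heegaard gluing matrix yields precisely $L(4r,2r\pm1)$ and $L(4r+2,2r-1)$; in the paper these are absorbed into the cited result \cite{BW} together with the explicit models of Section \ref{sect:examples}.
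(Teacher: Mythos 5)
Your proposal is essentially correct, but it takes a genuinely different route from the paper: the paper does not prove this lemma at all---it is quoted verbatim as Theorem 6.1 of \cite{BW}, and the text only reproduces the existence constructions by cutting the solid torus $V_1$ along a meridian disk into a cylinder $C$, drawing explicit arc systems on $\partial C$ (Figures \ref{fig:projective-plane}--\ref{fig:genus-3-surface}), capping them with disks in $C$ and a meridian disk of $V_2$, and then reading off the genus from an Euler characteristic count and non-orientability from Lemma \ref{lem:intersections}. The non-existence halves are, in Bredon--Wood, obtained by isotoping the surface into a normal position relative to the Heegaard splitting and doing arc combinatorics. You instead prove the obstructions by modern $3$-manifold machinery: for (1) the twisted $I$-bundle over $\mathbb{RP}^2$ plus irreducibility of $L(l,m)$, and for (2) incompressibility of $\partial N(K)$ in $N(K)$, atoroidality of lens spaces, irreducibility of the complement (one should note explicitly that a sphere in the complement cannot bound a ball containing $N(K)$ since a Klein bottle does not embed in $S^3$), and the classification of Dehn fillings of the orientable twisted $I$-bundle over the Klein bottle via its Seifert fibration over $D^2(2,2)$. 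This buys conceptual clarity and independence from the normal-form combinatorics of \cite{BW}, at the cost of invoking heavier background; your version also genuinely needs the neighbourhood argument in (1), since Lemma \ref{lem:parity} alone only gives $l\equiv 2\pmod 4$. The two places where your write-up asserts rather than proves are exactly the computational cores: (a) pinning down that the $n=1$ fillings of $N(K)$, i.e.\ the Seifert fibred spaces over $S^2(2,2)$, are precisely the lens spaces $L(4r,2r\pm 1)$ (note $L(4r,2r+1)\cong L(4r,2r-1)$ since $(2r-1)(2r+1)\equiv -1\pmod{4r}$), and (b) the framing arithmetic showing that a single slope on the Heegaard torus bounds a spanning M\"obius band (boundary slope $2\lambda+a\mu$, $a$ odd) in each solid torus exactly when $m\equiv 2r\pm1\pmod{4r}$, and a M\"obius band on one side and a once-punctured Klein bottle on the other exactly in $L(4r+2,2r-1)$. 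You flag both correctly; they are routine but must be done for the proof to be complete, and in the paper they are absorbed into the citation of \cite{BW} together with the explicit figures.
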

  
  According to \cite{BW}, the embeddings can be constructed as follows. For $L(2,1)$, see Figure \ref{fig:projective-plane}.
  \begin{figure}[htbp]
  	\centering
  	\setlength{\unitlength}{1bp}%
  	\begin{picture}(234.12, 84.40)(0,0)
  		\put(0,0){\includegraphics{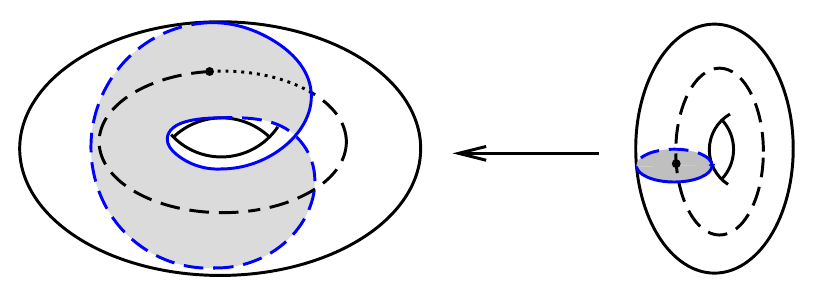}}
  		\put(13.68,8.07){\fontsize{8.83}{10.60}\selectfont $V_1$}
  		\put(179.70,8.15){\fontsize{8.83}{10.60}\selectfont $V_2$}
  		\put(143.56,44.38){\fontsize{8.83}{10.60}\selectfont gluing}
  	\end{picture}%
  	\caption{\label{fig:projective-plane}%
  		A projective plane in L(2,1).}
  \end{figure}
  For $L(4r,2r-1)$, choose the arcs on $\partial C$ as in Figure \ref{fig:Klein-bottle}.
  \begin{figure}[htbp]
  	\centering
  	\setlength{\unitlength}{1bp}%
  	\begin{picture}(360.81, 100.73)(0,0)
  		\put(0,0){\includegraphics{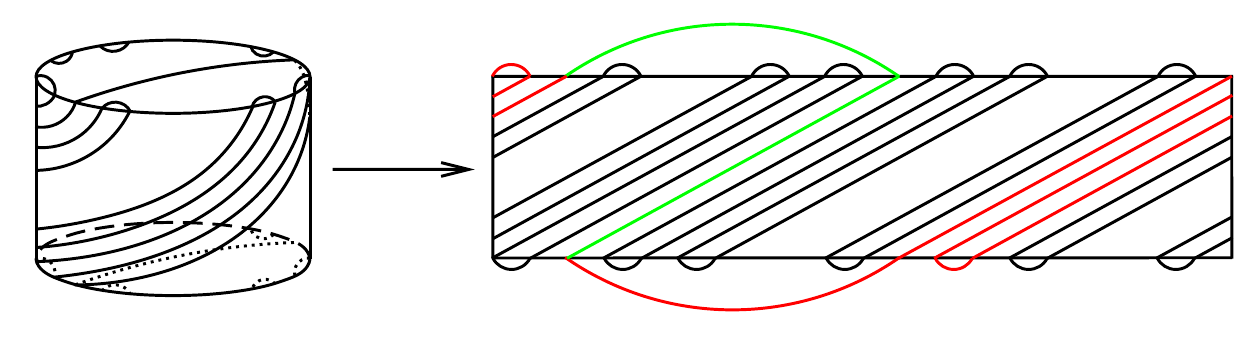}}
  		\put(169.75,62.74){\fontsize{10.69}{12.83}\selectfont $\cdots$}
  		\put(268.33,53.83){\fontsize{10.69}{12.83}\selectfont $\cdots$}
  		\put(333.29,36.51){\fontsize{10.69}{12.83}\selectfont $\cdots$}
  		\put(34.25,49.53){\fontsize{9.96}{11.95}\selectfont $\cdots$}
  		\put(77.48,37.10){\fontsize{8.54}{10.24}\selectfont $\cdots$}
  		\put(5.67,22.54){\fontsize{8.54}{10.24}\selectfont $1$}
  		\put(11.66,13.34){\fontsize{8.54}{10.24}\selectfont $2$}
  		\put(18.65,10.86){\fontsize{8.54}{10.24}\selectfont $3$}
  		\put(26.11,8.75){\fontsize{8.54}{10.24}\selectfont $4$}
  		\put(35.75,7.51){\fontsize{8.54}{10.24}\selectfont $5$}
  		\put(84.41,13.23){\fontsize{8.54}{10.24}\selectfont $2r$}
  		\put(90.71,23.02){\fontsize{8.54}{10.24}\selectfont $2r+1$}
  		\put(89.93,76.58){\fontsize{8.54}{10.24}\selectfont $2r+1$}
  		\put(86.49,84.19){\fontsize{8.54}{10.24}\selectfont $2r+2$}
  		\put(5.87,75.51){\fontsize{8.54}{10.24}\selectfont $1$}
  		\put(8.04,84.62){\fontsize{8.54}{10.24}\selectfont $4r$}
  		\put(96.68,56.54){\fontsize{8.54}{10.24}\selectfont unfolding}
  		\put(249.91,87.29){\fontsize{9.96}{11.95}\selectfont \textcolor[rgb]{0, 1, 0}{$\delta_1$}}
  	\end{picture}%
  	\caption{\label{fig:Klein-bottle}%
  		A Klein bottle in $L(4r,2r-1)$.}
  \end{figure}
  Their union consists of $2r-1$ closed curves, which bound disjoint disks in $C$. Gluing these disks and $\{1\}\times D^2\subset V_2$ back, we get an embedded closed surface in $L(4r,2r-1)$. From the gluing we see its Euler number is $(2r-1)-(4r)/2+1=0$. As the surface intersects the core circle $S^1\times\{0\}\subset V_2$ at only one point, it must be non-orientable thus a Klein bottle. 
  \begin{figure}[htbp]
  	\centering
  	\setlength{\unitlength}{1bp}%
  	\begin{picture}(354.74, 248.93)(0,0)
  		\put(0,0){\includegraphics{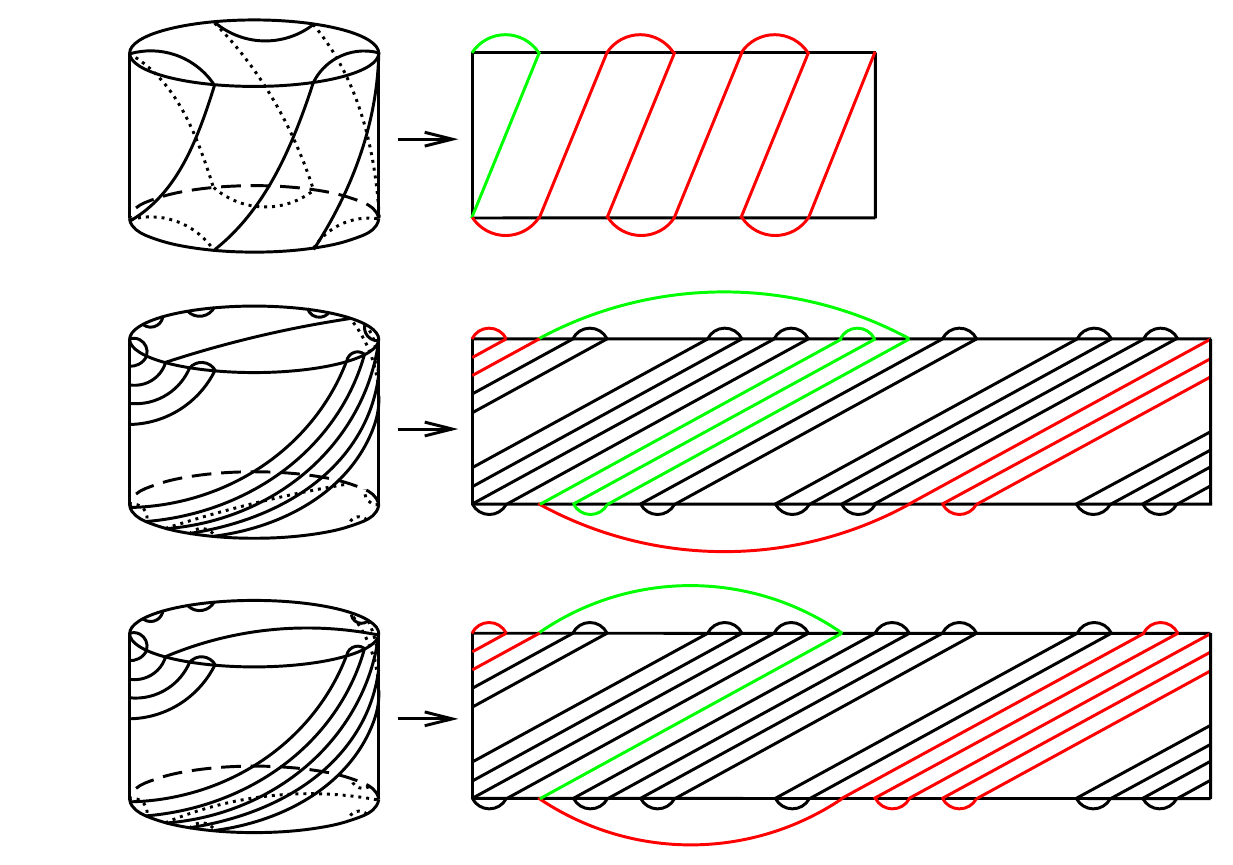}}
  		\put(47.06,146.34){\fontsize{7.76}{9.31}\selectfont $3$}
  		\put(110.14,148.98){\fontsize{7.76}{9.31}\selectfont $2r+2$}
  		\put(101.28,158.05){\fontsize{7.76}{9.31}\selectfont $2r+4$}
  		\put(32.54,149.22){\fontsize{7.76}{9.31}\selectfont $1$}
  		\put(46.24,61.64){\fontsize{7.76}{9.31}\selectfont $3$}
  		\put(110.40,63.72){\fontsize{7.76}{9.31}\selectfont $2r+2$}
  		\put(32.59,64.52){\fontsize{7.76}{9.31}\selectfont $1$}
  		\put(5.67,205.70){\fontsize{9.96}{11.95}\selectfont $r=1$}
  		\put(5.67,128.97){\fontsize{9.96}{11.95}\selectfont $r>1$}
  		\put(5.67,39.35){\fontsize{9.96}{11.95}\selectfont $r$ even}
  		\put(161.35,136.59){\fontsize{9.72}{11.66}\selectfont $\cdots$}
  		\put(250.97,128.49){\fontsize{9.72}{11.66}\selectfont $\cdots$}
  		\put(59.04,124.58){\fontsize{9.72}{11.66}\selectfont $\cdots$}
  		\put(73.21,141.02){\fontsize{9.72}{11.66}\selectfont $\cdots$}
  		\put(68.85,156.84){\fontsize{9.72}{11.66}\selectfont $\cdots$}
  		\put(81.99,93.70){\rotatebox{6.00}{\fontsize{9.72}{11.66}\selectfont \smash{\makebox[0pt][l]{$\cdots$}}}}
  		\put(161.35,51.82){\fontsize{9.72}{11.66}\selectfont $\cdots$}
  		\put(250.97,43.71){\fontsize{9.72}{11.66}\selectfont $\cdots$}
  		\put(310.02,27.97){\fontsize{9.72}{11.66}\selectfont $\cdots$}
  		\put(59.04,39.81){\fontsize{9.72}{11.66}\selectfont $\cdots$}
  		\put(73.21,56.24){\fontsize{9.72}{11.66}\selectfont $\cdots$}
  		\put(68.85,72.07){\fontsize{9.72}{11.66}\selectfont $\cdots$}
  		\put(79.55,8.99){\rotatebox{5.00}{\fontsize{9.72}{11.66}\selectfont \smash{\makebox[0pt][l]{$\cdots$}}}}
  		\put(310.80,112.74){\fontsize{9.72}{11.66}\selectfont $\cdots$}
  		\put(5.67,117.66){\fontsize{9.96}{11.95}\selectfont $r$ odd}
  	\end{picture}%
  	\caption{\label{fig:genus-3-surface}%
  		A genus-$3$ non-orientable surface in $L(4r+2,2r-1)$.}
  \end{figure}
  Similarly, with the closed curves on $\partial C$ shown in Figure \ref{fig:genus-3-surface}, we obtain a non-orientable closed surface of genus 3, embedded in $L(4r+2,2r-1)$.
  
  \begin{rem}\label{rem:surjectivity}
  	Every green curve in the Figures generates the fundamental group of the corresponding lens space. Thus the embeddings above induce surjective homomorphisms on $\pi_1$.
  \end{rem}
  
  \begin{prop}\label{prop:torsion}
  	Suppose $K$ is a non-orientable closed surface of genus $h$, and $i:K\to L(l,m)$ is an embedding. Let $\delta\in H_1(K;\mathbb{Z})$ be the order-$2$ element, then $i_*(\delta)$ is non-trivial in $H_1(L(l,m);\mathbb{Z})$.
  \end{prop}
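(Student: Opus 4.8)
The plan is to obstruct the vanishing of $i_*(\delta)$ by a linking‑number computation against a core curve of $L(l,m)$, the key input being Lemma~\ref{lem:intersections}.

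Recall that $H_2(L(l,m);\mathbb{Z})=0$, that $H_1(L(l,m);\mathbb{Z})\cong\mathbb{Z}_l$ is generated by the class of a core curve $\gamma$, and that the linking form on this group is nonsingular; consequently $i_*(\delta)$ is trivial if and only if its linking number with $[\gamma]$ vanishes. I would represent $\delta$ by a $1$-cycle $c$ lying on $K$ (say an embedded circle). Since $2\delta=0$ in $H_1(K;\mathbb{Z})$, there is an integral $2$-chain $W$ carried by $K$ with $\partial W=2c$. The first claim is that, reduced mod $2$, $W$ equals the fundamental class $[K]$: indeed $W\bmod 2$ is a mod-$2$ $2$-cycle on the closed connected surface $K$, hence equals $0$ or $[K]$; and if it were $0$ then $W=2W_1$ for an integral $2$-chain $W_1$, giving $c=\partial W_1$ and so $\delta=[c]=0$ in $H_1(K;\mathbb{Z})$, contradicting that $\delta$ is the nonzero order-$2$ element. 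Hence $W$ has odd coefficient on every $2$-simplex of $K$.

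Next, isotope $\gamma$ so that it is transverse to $i(K)$ and to $i(W)$ and disjoint from $i(c)$. Because the coefficients of $W$ on $K$ are all odd, the algebraic intersection number $i(W)\cdot\gamma$ is congruent mod $2$ to $\#\bigl(\gamma\cap i(K)\bigr)$, which is odd by Lemma~\ref{lem:intersections} since $K$ is non-orientable. Suppose now, for contradiction, that $i_*(\delta)=0$. Then $i(c)$ bounds an integral $2$-chain $U$ in $L(l,m)$, so $i(W)-2U$ is a $2$-cycle; as $H_2(L(l,m);\mathbb{Z})=0$ it bounds a $3$-chain, whence $(i(W)-2U)\cdot\gamma=0$ and therefore $i(W)\cdot\gamma=2\,(U\cdot\gamma)$ is even — a contradiction. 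Thus $i_*(\delta)\neq 0$. (In linking‑form terms this says the linking number of $i_*(\delta)$ with $[\gamma]$ equals $\tfrac12\,(i(W)\cdot\gamma)\equiv\tfrac12\pmod 1$.)

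A pleasant feature is that the argument is uniform in the genus $h$: there is no parity case‑split, and Lemma~\ref{lem:parity} is not even used. The step I would single out as the crux is the simple but essential observation that it is the bounding $2$-chain $W$ — not the surface $K$ itself — which one should intersect with $\gamma$; the more naive inputs (intersection of $K$ with $\gamma$, the mod-$2$ cohomology ring of $L(l,m)$, or lifting $i$ to the universal cover $S^3$) only settle the case $h$ odd. After the right choice of $W$, the remaining work is routine: fix a triangulation of $L(l,m)$ in which $K$ is a subcomplex, take $W$ simplicial, and arrange (by small perturbations of $\gamma$, also across $U$ once it is produced) the transversality that makes all three intersection numbers above well defined, so that the bilinearity used in the last step is immediate. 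The only genuinely nontrivial ingredient is Lemma~\ref{lem:intersections}, which supplies the parity of $\#\bigl(\gamma\cap i(K)\bigr)$.
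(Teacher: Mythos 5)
Your proof is correct, and it takes a genuinely different route from the paper's. The paper follows Bredon--Wood's Section 7 directly: it isotopes $K$ into normal position with respect to the genus-one Heegaard splitting (so that $K\cap V_2$ is a single meridian disk), observes that the pieces of $K\cap C$ are orientable, and glues half of them to the meridian disk to build an orientable subsurface $K_0\subset K$ whose boundary represents $2\delta$; tracking where $\partial K_0$ lies on the Heegaard torus then gives the exact value $i_*(\delta)=l/2\in\mathbb{Z}_l$. You instead argue homologically: a $2$-chain $W$ on $K$ with $\partial W=2c$ must have all coefficients odd (otherwise $\delta=0$), so $i(W)\cdot\gamma\equiv\#\bigl(\gamma\cap i(K)\bigr)\pmod 2$ is odd by Lemma~\ref{lem:intersections}, while $i_*(\delta)=0$ would force this intersection number to be even since $H_2(L(l,m);\mathbb{Z})=0$ --- in effect you compute that the linking number of $i_*(\delta)$ with a core is $\tfrac12$. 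The trade-off: your argument is shorter, avoids the isotopy and cut-and-paste bookkeeping, and needs only Lemma~\ref{lem:intersections} as geometric input; the paper's construction yields the sharper statement $i_*(\delta)=l/2$, which is the form actually invoked in Section~\ref{sect:necessity}. That sharper statement is nonetheless recovered from yours for free, since $2\,i_*(\delta)=0$ in $\mathbb{Z}_l$ and $l$ is even by Lemma~\ref{lem:parity}, so non-triviality already pins down $i_*(\delta)=l/2$.
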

  
  \begin{proof}
  	We follow the arguments in Section 7 of \cite{BW}. Suppose $K$ intersects the core $S^1\times \{0\}\subset V_2$ transversely, thus $K\cap V_2$ can be assumed as a union of $d$ meridian disks. We may reduce to the case $d=1$ by isotopy as \cite{BW} does. Now assume $K\cap V_2=\{1\}\times D^2$ and $K\cap \partial C$ consists of $l$ parallel arcs on $A$, $l/2$ arcs on $D_0$, $l/2$ arcs on $D_1$, and maybe some closed curves in the interior of $D_0,D_1$. Thus $K\cap C$ consists of some compact surfaces properly embedded in $C$. They must be orientable, otherwise a double of $C$ will give an $S^3$ admitting embedded non-orientable closed surfaces. Now $K$ can be constructed by gluing back these surfaces and the disk $K\cap V_2$.
  	We just glue part of them back as follows, to obtain a compact orientable surface $K_0$, such that $K_0$ can also be constructed by cutting $K$ along a union of curves which represents $\delta$ in homology. 
  	
  	Fix an orientation on $K\cap V_2$. For each arc component $\alpha$ of $K\cap D_1$, it connects two parallel components $\beta_1,\beta_2$ of $K\cap A$. Let $K_{\alpha}$ be the component of $K\cap C$ such that $\partial K_{\alpha}\supset \alpha\cup\beta_1\cup\beta_2$. For any given orientation on $K_{\alpha}$, it induces opposite orientations on the parallel arcs $\beta_1,\beta_2$. Therefore, we can glue part of $\partial(K\cap V_2)$ back to one of $\beta_1,\beta_2$, such that the orientation of $K\cap V_2$ and that of $K_{\alpha}$ concide, thus $(K\cap V_2)\cup K_{\alpha}$ is well oriented. In this way, half of the $l$ parallel arcs $K\cap A$ can be glued onto $\partial (K\cap V_2)$, such that the obtained surface, denoted $K_0$, is still orientable. 
  	$K$ can be obtained by gluing $\partial K_0$, so $\partial K_0$ represents $2\delta$ in $H_1(K;\mathbb{Z})$. Geometrically, $\partial K_0$ consists of $l/2$ parallel arcs on $A$, $l/2$ arcs in $\partial(K\cap V_2)$ and some components of $K_0\cap(D_0\cup D_1)$. Thus $i_*(\delta)=l/2$ in $H_1(L(l,m);\mathbb{Z})\cong\mathbb{Z}_l$. 
  \end{proof}
  
  The following lemma can also be deduced by the analysis in Section 7 in \cite{BW} and Figure \ref{fig:Klein-bottle}. Note that the green curve in Figure \ref{fig:Klein-bottle} represents $\delta_1$. 
  
  \begin{lem}\label{lem:unique-embedding}
  	The embedding of a Klein bottle into $L(4r,2r-1)$ $(r\geq 2)$ is unique up to isotopy. Denote it by $i$ and identify $H_1(L(4r,2r-1);\mathbb{Z})$ with $\mathbb{Z}_{4r}$ by mapping an oriented core to $1$. Let $\langle\delta_1,\delta_2\,|\,\delta_1^2\delta_2^2=1\rangle$ be the fundamental group of the Klein bottle, then
  	\begin{displaymath}
  	\{i_*([\delta_1]),i_*([\delta_2])\}=\{1,2r-1\} \text{ or }\{-1,2r+1\}\subset\mathbb{Z}_{4r}
  	\end{displaymath}
  	in homology.
  \end{lem}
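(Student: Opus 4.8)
The plan is to realize the embedded Klein bottle in the Bredon--Wood normal form with respect to the genus-one Heegaard splitting $L(4r,2r-1)=V_1\cup_\omega V_2$, exactly as in the proof of Proposition~\ref{prop:torsion}, and then to read off both the uniqueness and the homology classes from that normal form.

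First I would recall the normal form. By the analysis of Section~7 of \cite{BW}, after an isotopy $K\cap V_2$ is a disjoint union of $d$ meridian disks, and $d$ can be taken minimal; then $K\cap C$, where $C=D^2\times I$ is $V_1$ cut along a meridian disk, is a union of properly embedded planar \emph{orientable} surfaces, orientable because doubling $C$ along $D_0\cup D_1$ would otherwise produce an embedded non-orientable closed surface in $S^3$. For a Klein bottle $d$ is odd by Lemma~\ref{lem:intersections} (in particular $d\ge1$); an Euler-characteristic count, using $\chi(K)=0$ together with the contributions of the meridian disks and the planar pieces, shows that the minimum $d=1$ is realized and that $K\cap\partial C$ then consists of exactly $4r$ parallel arcs on the annulus $A$, $2r$ arcs on each of $D_0,D_1$, and no interior closed curves. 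The only remaining freedom is how the planar pieces in $C$ join these arcs; for $r\ge2$ the requirement that the glued-up surface be a connected non-orientable closed surface of genus $2$ forces this joining pattern to be the one of Figure~\ref{fig:Klein-bottle}, up to isotopy of $C$. Since the genus-one Heegaard splitting of a lens space is unique up to isotopy \cite{BO}, the embedding $i$ is unique up to isotopy.

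Next I would compute the homology classes. Taking $\delta_1$ to be the green curve of Figure~\ref{fig:Klein-bottle}, a direct inspection (it runs once and monotonically around the $S^1$-direction of $V_1$, hence is isotopic in $L(4r,2r-1)$ to the core used to set up the identification) gives $i_*[\delta_1]=\pm1$ in $H_1(L(4r,2r-1);\mathbb Z)\cong\mathbb Z_{4r}$, consistently with the surjectivity of $i_*$ noted in Remark~\ref{rem:surjectivity}. For $\delta_2$: in $H_1(K;\mathbb Z)\cong\mathbb Z\oplus\mathbb Z_2$ the relation $\delta_1^2\delta_2^2=1$ gives $2\bigl([\delta_1]+[\delta_2]\bigr)=0$, and $[\delta_1]+[\delta_2]=[\delta_1\delta_2]$ is the unique element of order $2$. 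By (the proof of) Proposition~\ref{prop:torsion}, $i_*$ carries this class to the order-$2$ element $l/2=2r$ of $\mathbb Z_{4r}$, so $i_*[\delta_1]+i_*[\delta_2]=2r$. Combined with $i_*[\delta_1]=\pm1$ this gives $i_*[\delta_2]=2r\mp1$, i.e. $\{i_*[\delta_1],i_*[\delta_2]\}=\{1,2r-1\}$ or $\{-1,2r+1\}$ in $\mathbb Z_{4r}$, as asserted.

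The main obstacle I anticipate is the uniqueness half: making rigorous, rather than merely plausible, the claim that the Bredon--Wood normal form for a Klein bottle is rigid — in particular ruling out $d\ge3$ after minimization and showing that for $r\ge2$ the orientable planar pieces in $C$ must be assembled precisely as in Figure~\ref{fig:Klein-bottle} (this is exactly where the hypothesis $r\ge2$ is used). Once the normal form and its uniqueness are granted, the homology computation is purely formal, relying only on Proposition~\ref{prop:torsion} and the Klein-bottle relation.
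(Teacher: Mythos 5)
Your proposal is correct and follows essentially the same route as the paper, whose entire ``proof'' is the remark that the lemma ``can be deduced by the analysis in Section 7 of \cite{BW} and Figure \ref{fig:Klein-bottle}'' together with the observation that the green curve there represents $\delta_1$; like the paper, you put the Klein bottle in Bredon--Wood normal form ($d=1$ meridian disk, orientable planar pieces in $C$), read off $i_*[\delta_1]=\pm1$ from that curve, and pin down $i_*[\delta_2]$ via $i_*[\delta_1]+i_*[\delta_2]=2r$ from (the proof of) Proposition \ref{prop:torsion}. The combinatorial rigidity of the normal form that you flag as the remaining obstacle is exactly what the paper also leaves to \cite{BW}, so there is no gap relative to the paper's own treatment.
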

  
\bibliographystyle{amsalpha}

\end{document}